\documentclass[11pt]{article}

\usepackage{amsmath,verbatim}
\usepackage{amsfonts}
\usepackage{amssymb}
\usepackage{amsthm,epic}
\usepackage{mathrsfs}
\usepackage{xcolor}
\usepackage{hyperref}
\usepackage{subcaption}

\usepackage{tikz} 
\usepackage[T1]{fontenc}
\usepackage{enumitem}
\usepackage{pgf, tikz, pstricks, pict2e}
\usepackage{graphicx, xcolor}
\usepackage{footnote,pifont,dsfont,mathrsfs}
\usepackage{multirow}
\usetikzlibrary{patterns}

\renewcommand{\geq}{\geqslant}
\renewcommand{\leq}{\leqslant}
\renewcommand{\epsilon}{\varepsilon}

\newtheorem{thm}{Theorem}

\newtheorem{proposition}[thm]{Proposition}
\newtheorem{lem}[thm]{Lemma}

\theoremstyle{definition}
\newtheorem{defn}[thm]{Definition}

\theoremstyle{remark}

\newtheorem{rem}[thm]{Remark}
\usepackage{array}
\usepackage[font={small}]{caption}

\theoremstyle{remark}

\usepackage[font={small,it}]{caption}

\colorlet{darkgreen}{green!50!black}
\definecolor{darkseagreen}{rgb}{0.56, 0.74, 0.56}
\definecolor{lightcyan}{rgb}{0.88, 1.0, 1.0}
\definecolor{lightblue}{rgb}{0.68, 0.85, 0.9}
\definecolor{palecerulean}{rgb}{0.61, 0.77, 0.89}
\definecolor{lgreen} {RGB}{180,210,100}
\definecolor{dblue}  {RGB}{20,66,129}
\definecolor{ddblue} {RGB}{11,36,69}
\definecolor{lred}   {RGB}{220,0,0}
\definecolor{nred}   {RGB}{224,0,0}
\definecolor{norange}{RGB}{230,120,20}
\definecolor{nyellow}{RGB}{255,221,0}
\definecolor{ngreen} {RGB}{98,158,31}
\definecolor{dgreen} {RGB}{78,138,21}
\definecolor{nblue}  {RGB}{28,130,185}
\definecolor{jblue}  {RGB}{20,50,100}
\definecolor{Apricot} {RGB}{255, 170, 123} 
\definecolor{dpurple}  {RGB}{53,21,93}

\DeclareMathOperator{\Ind}{ind}
\newcommand{\dz}{\,\mathrm{d}z}
\newcommand{\ds}{\,\mathrm{d}s}

\newcommand{\du}{\,\mathrm{d}u}

\makeatletter
\def\testb#1{\testb@i#1,,\@nil}%
\def\testb@i#1,#2,#3\@nil{%
  \draw[->, thick] (O) --++(#1);
  \ifx\relax#2\relax\else\testb@i#2,#3\@nil\fi}
\makeatother   
\newcommand{\makediag}[1]{
    \coordinate (O) at (0,0); \coordinate (N) at (0,0.8);
    \coordinate (NE) at (0.8,0.8); \coordinate (E) at (0.8,0);
    \coordinate (SE) at (0.8,-0.8); \coordinate (S) at (0,-0.8);
    \coordinate (SW) at (-0.8,-0.8);\coordinate (W) at (-0.8,0);
    \coordinate (N2E) at (1.6,0.8);\coordinate (S2W) at (-1.6,-0.8);
    \coordinate (NW) at (-0.8,0.8); \coordinate (B1) at (1.2,1.2);
    \coordinate (B2) at (-1.2,-1.2);
    \testb{#1}
} 
\newcommand{\diagr}[1]{
  \begin{tikzpicture}[scale=0.8]\makediag{#1}\end{tikzpicture}
}
\newcommand{\smalldiagr}[1]{
  \begin{tikzpicture}[scale=0.4]\makediag{#1}\end{tikzpicture}
}

\usepackage[T1]{fontenc}
\usepackage[mathscr]{eucal}
\usepackage{fullpage}


\makeatletter
\def\testbb#1{\testbb@i#1,,\@nil}%
\def\testbb@i#1,#2,#3\@nil{%
  \draw (O) --++(#1);
  \ifx\relax#2\relax\else\testbb@i#2,#3\@nil\fi}
\makeatother

\date{\today}
\author{K. Raschel\thanks{CNRS, Institut Denis Poisson, Universit\'e de Tours, France; \texttt{raschel@math.cnrs.fr}}\ \thanks{This project has received funding from the European Research Council (ERC) under the European Union's Horizon 2020 research and innovation programme under the Grant Agreement No 759702.} \& A. Trotignon\thanks{Department of Mathematics, Simon Fraser University, Canada \& Institut Denis Poisson, Universit\'e de Tours, France; \texttt{amelie.trotignon@idpoisson.fr}}}

\title{On walks avoiding a quadrant}

\begin{document}
\maketitle

\vspace{-4mm}

\begin{abstract}
Two-dimensional (random) walks in cones are very natural both in combinatorics and probability theory: they are interesting in their own right and also in relation to other discrete structures. While walks restricted to the first quadrant have been well studied, the case of planar, non-convex cones---equivalent to the three-quarter plane after a linear transform---has been approached only recently. In this article we develop an analytic approach for the enumeration of walks in three quadrants. The advantage of this method is the uniform treatment of models corresponding to different step sets. After splitting the three quadrants into two symmetric convex cones, the method is composed of three main steps: write a system of functional equations satisfied by the counting generating function, which may be simplified into one single equation under symmetry conditions; transform the functional equation into a boundary value problem; and finally solve this problem, using a new concept of anti-Tutte's invariant. The result is a contour-integral expression for the generating function. Such systems of functional equations also appear in queueing theory, namely, in the Join-the-Shortest-Queue model, which is still open in the non-symmetric case.\footnote{{\it Keywords.} Lattice walks in cones; Generating function; Boundary value problem; Conformal mapping}
\end{abstract}

\begin{figure}[ht!]
\centering
\begin{tabular}{cc}
\includegraphics[scale=0.28]{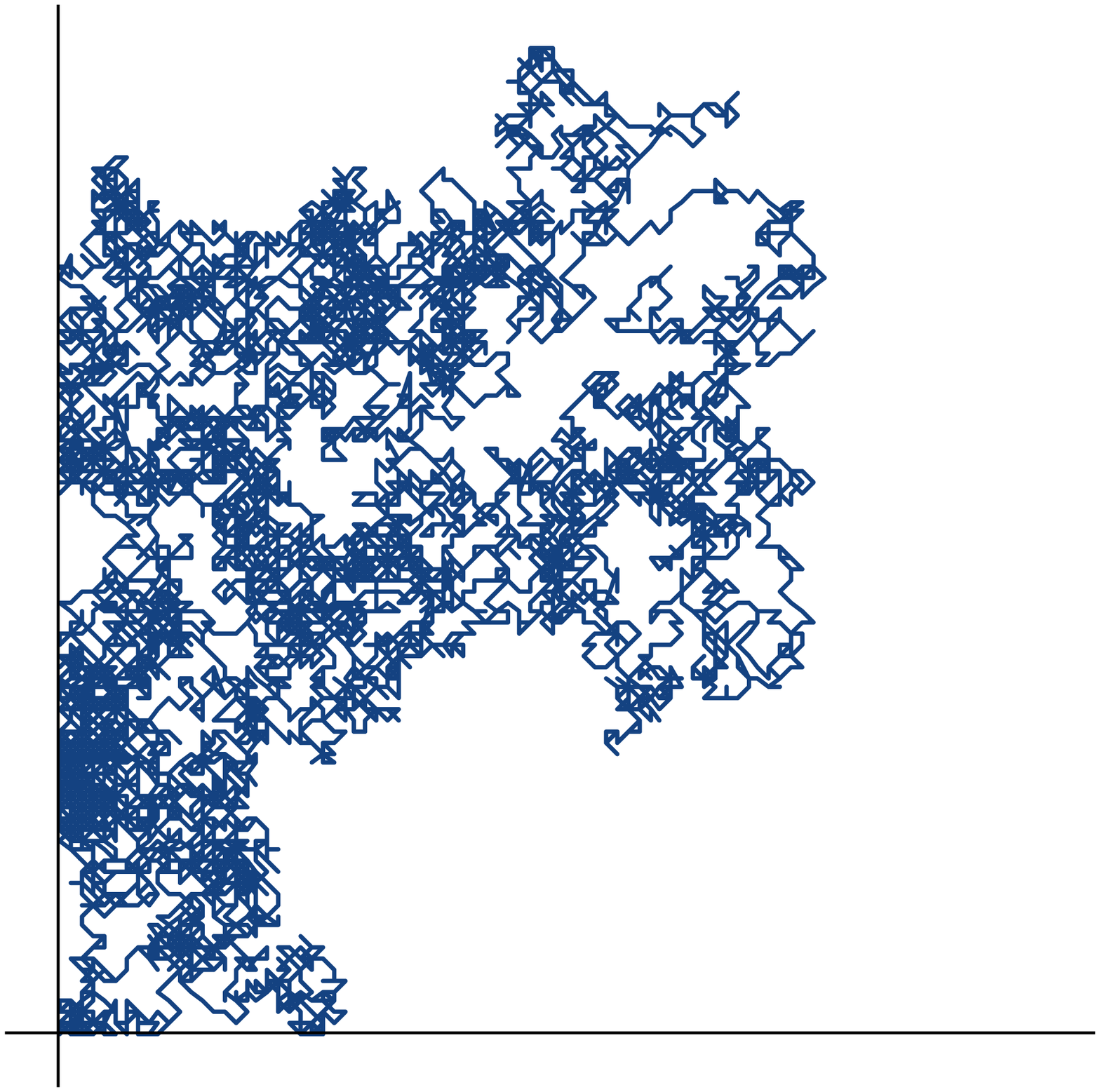}\qquad&\qquad
\includegraphics[scale=0.28]{./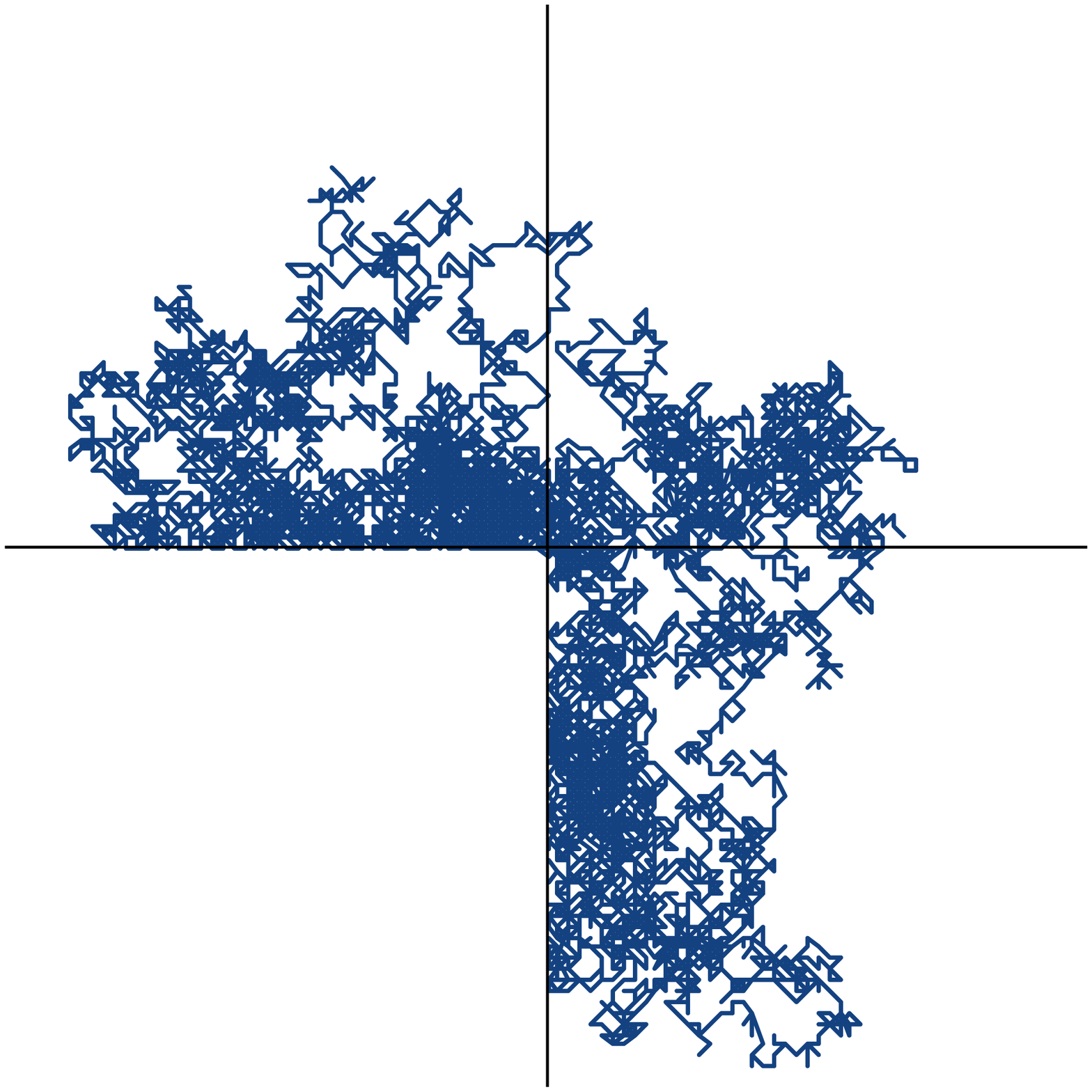}
\end{tabular}
\caption{Walks in the quarter plane and in three-quarter plane}
\label{fig:pictures_1/4_3/4}
\end{figure}

\newpage
\section{Introduction}

\paragraph{Context.}
Two-dimensional (random) walks in cones are very natural both in combinatorics and probability theory: they are interesting in their own right and also in relation to other discrete structures, see \cite{BMMi-10} and references therein. Most of the attention has been devoted to the case of convex cones (equivalent to the quarter plane, after a linear transform), see Figure~\ref{fig:pictures_1/4_3/4}, left. Thanks to an appealing variation of techniques, which complement and enrich each other (from combinatorics \cite{MiRe-09,BMMi-10}, complex analysis \cite{FaRa-12,Ra-12}, probability theory \cite{DeWa-15}, computer algebra \cite{BoKa-10,BoChHoKaPe-17}, Galois difference equations \cite{DrHaRoSi-17}), one now has a very good understanding of these quadrant models, most of the time via their generating function, which counts the number of walks of length $n$, starting from a fixed point, ending at an arbitrary point $(i,j)$ and remaining in the cone (see \eqref{eq:generating_function_1/4} below). Throughout the present work, all walk models will be assumed to have small steps, i.e., jumps in $\{-1,0,1\}^2$, see Figure \ref{fig:some_models} for a few examples. Let us recall a few remarkable results:
\begin{itemize}
     \item \textit{Exact expressions} exist for the generating function (to illustrate the variety of techniques, remark that the generating functions are infinite series in \cite{MiRe-09}, positive part extractions of diagonals in \cite{BMMi-10}, contour integrals on quartics in \cite{FaRa-12,Ra-12}, integrals of hypergeometric functions in \cite{BoChHoKaPe-17}, etc.);
     \item The \textit{algebraic nature of the trivariate generating function} is known: it is D-finite (that is, satisfies a linear differential equation with polynomial coefficients) if and only if a certain group of birational transformations is finite \cite{BMMi-10,BoKa-10,KuRa-12}. More recently, the differential algebraicity (existence of non-linear differential equations) of the generating function has also been studied \cite{BeBMRa-17,DrHaRoSi-17};
     \item The \textit{asymptotics of the number of excursions} (an excursion is a path joining two given points and remaining in the cone) \cite{BMMi-10,FaRa-12,DeWa-15,BoRaSa-14} is known. Although the full picture is still incomplete, the asymptotics of the total number of walks is also obtained in several cases \cite{BMMi-10,FaRa-12,DeWa-15,Du-14,BoChHoKaPe-17}.
\end{itemize}
Almost systematically, the starting point to solve the above questions is a functional equation that is satisfied by the generating function---it corresponds to the intuitive step-by-step construction of a walk, and will be stated later on, see \eqref{eq:functional_equation_3/4} and \eqref{eq:functional_equation_1/4}.

Given the vivid interest in combinatorics of walks confined to a quadrant, it is very natural to consider next the non-equivalent case of non-convex cones, as in particular the union of three quadrants 
\begin{equation*}
     \mathcal C=\{(i,j)\in\mathbb Z^2: i\geq0 \text{ or } j\geq0\},
\end{equation*}
see Figure \ref{fig:pictures_1/4_3/4}. Following Bousquet-M\'elou \cite{BM-16}, we will also speak about walks avoiding a quadrant. Although walks avoiding a quarter plane have many common features with walks in a quarter plane, the former cited model is definitely much more complicated. To illustrate this fact, let us recall \cite{BM-16} that the simple walk (usually the simplest model, see Figure \ref{fig:some_models}) in three quadrants has the same level of complexity as the notoriously difficult Gessel's model \cite{BoKa-10,BMGessel-16} in the quadrant!

\begin{figure}[htb]
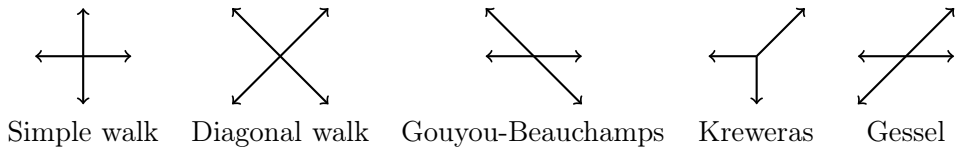

  \centering
  \begin{tabular}{ccccc}
    $\ \diagr{E,W,N,S}\ $ &$\ \diagr{NE,NW,SE,SW}\ $ &$\ \diagr{E,W,SE,NW}\ $ & $\ \diagr{NE,S,W}\ $  & $\ \diagr{NE,W,E,SW}\ $
\\
Simple walk&Diagonal walk&Gouyou-Beauchamps&Kreweras&Gessel
  \end{tabular}
   \caption{Some famous models of planar walks. The first two ones are solved in the three-quadrant in \cite{BM-16}}
  \label{fig:some_models}
\end{figure}

Three-quadrant walks have been approached only recently. In \cite{BM-16}, Bousquet-M\'elou solves the simple walk and the diagonal walk (see Figure \ref{fig:some_models} for a representation of these step sets) starting at various points. She obtains an exact expression of the generating function and derives several interesting combinatorial identities, among which a new proof of Gessel's conjecture via the reflection principle. Mustapha \cite{Mu-19} computes the asymptotics of the number of excursions for all small step models, following \cite{DeWa-15,BoRaSa-14} (interestingly and in contrast with combinatorics, the probabilistic results \cite{DeWa-15,Mu-19} on random walks in cones do not really depend on convexity). Using an original connection with planar maps, Budd \cite{Bu-17} obtains various enumerating formulas for planar walks, keeping track of the winding angle. These formulas can be used to enumerate simple walks in the three-quarter plane \cite{Bu-17,MiSi-18}. As recalled in \cite{BM-16}, the problem of diagonal walks on the square lattice was also raised in 2001 by David W.\ Wilson in entry A060898 of the OEIS \cite{OEIS}.

In this article we develop the analytic approach of \cite{FaIa-79,FaIaMa-17,Ra-12} to walks in three quadrants, thereby answering to a question of Bousquet-M\'elou in \cite[Sec.\ 7.2]{BM-16}.

\paragraph{Strategy.}
Once a step set $\mathcal S$ is fixed, our starting point is a functional equation satisfied by the generating function
\begin{equation}
\label{eq:generating_function_3/4}
     C(x,y)=\sum_{n\geq0}\sum_{(i,j)\in\mathcal C}c_{i,j}(n)x^{i}y^{j} t^{n},
\end{equation}
where $c_{i,j}(n)$ counts $n$-step $\mathcal S$-walks going from $(0,0)$ to $(i,j)$ and remaining in $\mathcal C$. Stated in \eqref{eq:functional_equation_3/4}, this functional equation translates the step-by-step construction of three-quadrant walks and takes into account the forbidden moves which would lead the walk into the forbidden negative quadrant. At first sight, this equation is very similar to its one-quadrant analogue (we will compare the equations \eqref{eq:functional_equation_3/4} to \eqref{eq:functional_equation_1/4} in Section \ref{subsec:kernel_functional_equations}), the only difference is that negative powers of $x$ and $y$ arise: this can be seen in the definition of the generating function \eqref{eq:generating_function_3/4} and on the functional equation \eqref{eq:functional_equation_3/4} as well, since the right-hand side of the latter involves some generating functions in the variables $\frac{1}{x}$ and $\frac{1}{y}$. This difference is fundamental and the methodology of \cite{BMMi-10,Ra-12} (namely, performing algebraic substitutions or evaluating the functional equation at well-chosen complex points) breaks down, as the series are no longer convergent.

\begin{figure}[htb]
\centering
\begin{tikzpicture}
\begin{scope}[scale=0.3]
\draw[white, fill=dblue!15] (0,-1) -- (5.5,4.5) -- (5.5,-5.5) -- (0,-5.5);
\draw[white, fill=dgreen!15] (-1,0) -- (-5.5,0) -- (-5.5,5.5) -- (4.5,5.5);
\draw[white, thick] (0,-5.5) grid (5.5,5.5);
\draw[white, thick] (-5.5,0) grid (0,5.5);
\draw[Apricot!90, thick] (0,0) -- (5.5,5.5);
\draw[->] (0,-5.5) -- (0,5.5);
\draw[->] (-5.5,0) -- (5.5,0);
\end{scope}
\end{tikzpicture}
   \caption{Splitting of the three-quadrant cone in two wedges of opening angle $\frac{3\pi}{4}$}
  \label{fig:three-quarter_split}
\end{figure}
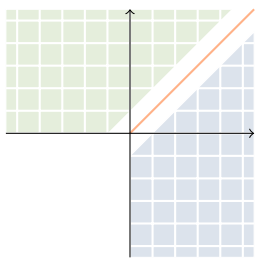

The idea in \cite{BM-16} is to see $\mathcal C$ as the union of three quarter planes, and to state for each quadrant a new equation, which is more complicated but (by construction) may be evaluated. Our strategy follows the same line: we split the three-quadrant in two convex cones (of opening angle $\frac{3\pi}{4}$, see Figure \ref{fig:three-quarter_split}) and write a system of two functional equations, one for each domain. The drawbacks of this decomposition is that it increases the complexity:
\begin{itemize}
     \item There are two functional equations instead of one;
     \item The functional equations involve more unknowns (corresponding to the diagonal and close-to-diagonal terms) in their right-hand sides, see Appendix \ref{app:proof_lem1}.
\end{itemize}
On the other hand:
\begin{itemize}
     \item The fundamental advantage is that the new equations may be evaluated---and ultimately will be solved;
     \item Unexpectedly, this splitting of the cone allows us to relate the combinatorial model of walks avoiding a quadrant to an interesting class of space inhomogeneous walks, among which a well-known problem in queueing theory: the Join-the-Shortest-Queue (JSQ) model, see Figure~\ref{fig:JSQ}.
     \end{itemize}
     
     \paragraph{Three-quadrant walks and space inhomogeneous walks.}
     Doing two simple changes of variables (one for each wedge, see in particular \eqref{eq:change_var}), the decomposition of the three-quarter plane shown on Figure \ref{fig:three-quarter_split} is equivalent to splitting a half-plane into the union of two quadrants and a half-line, see Figure \ref{fig:half-plane_split}. We end up with a \textit{space inhomogeneous model} in the half-plane. On the $y$-axis, the step set is composed of mixed steps from the step sets of the left and right quadrants. In particular, starting with a symmetric step set in the three-quarter plane, say the simple walk, one obtains (with the terminology of Figure \ref{fig:some_models}) Gouyou-Beauchamps' model in the left quadrant and Gessel's model in the right one, see Figure \ref{fig:half-plane_split_symm} on the left. This model is equivalent to study Gessel's step set in the quadrant, killed on the $x$-axis and reflected on the $y$-axis, see Figure \ref{fig:half-plane_split_symm} in the middle. A related model is studied in \cite{BeOwRe-18,XuBeOw-19}: in these articles, the authors work on walks in the quadrant with different weights on the boundary, see Figure \ref{fig:half-plane_split_symm} on the right, and give some results on the nature of the generating function of such walks.
     
     A related, simpler model (that we don't solve in the present paper) would be to split the full plane into two half-planes and a boundary axis, to consider in each of the three regions a (different) step set, and to solve the associated walk model, see Figure \ref{fig:plane_split}, right.
     
Some other space inhomogeneous walk models have been investigated in \cite{BoMuSi-15,Mu-19,BuKa-19}, but this notion of inhomogeneity does not match with ours. Indeed, a simple but typical example in \cite{BoMuSi-15,Mu-19,BuKa-19} consists in dividing $\mathbb Z^2$ into the odd and even lattices, and to assign to each point of the even (resp.\ odd) lattice a certain step set $\mathcal S$ fixed a priori (resp.\ another step set $\mathcal S'$), see Figure \ref{fig:plane_split}, left.

\begin{figure}[htb]
\centering
\begin{tikzpicture}
\begin{scope}[scale=0.45, xshift=0cm, yshift=2.5cm]
\draw[white, fill=dblue!15] (0,-1) -- (5.5,4.5) -- (5.5,-5.5) -- (0,-5.5);
\draw[white, fill=dgreen!15] (-1,0) -- (-5.5,0) -- (-5.5,5.5) -- (4.5,5.5);
\draw[white, thick] (0,-5.5) grid (5.5,5.5);
\draw[white, thick] (-5.5,0) grid (0,5.5);
\draw[Apricot!90, thick] (0,0) -- (5.5,5.5);
\draw[dashed, dgreen!90, thick] (-1,0) -- (4.5,5.5);
\draw[dashed, dblue!90, thick] (0,-1) -- (5.5,4.5);
\draw[->] (0,-5.5) -- (0,5.5);
\draw[->] (-5.5,0) -- (5.5,0);
\draw[thick, dblue, ->] (3,-3) -- (2,-3);
\draw[thick, dblue, ->] (3,-3) -- (2,-4);
\draw[thick, dblue, ->] (3,-3) -- (3,-4);
\draw[thick, dblue, ->] (3,-3) -- (4,-2);
\draw[thick, dblue, ->] (3,-3) -- (3,-2);
\draw[thick, dgreen, ->] (-3,3) -- (-4,3);
\draw[thick, dgreen, ->] (-3,3) -- (-3,4);
\draw[thick, dgreen, ->] (-3,3) -- (-2,4);
\draw[thick, dgreen, ->] (-3,3) -- (-3,2);
\draw[thick, dgreen, ->] (-3,3) -- (-4,2);
\draw[thick, nred, ->] (3,3) -- (2,3);
\draw[thick, nred, ->] (3,3) -- (2,2);
\draw[thick, nred, ->] (3,3) -- (3,2);
\draw[thick, nred, ->] (3,3) -- (4,4);
\draw[thick, nred, ->] (3,3) -- (3,4);
\end{scope}

\begin{scope}[scale=0.5, xshift=19.5cm, yshift=0cm]
\draw[white, fill=dblue!15] (1,0) -- (1,6.5) -- (6.5,6.5) -- (6.5,0);
\draw[white, fill=dgreen!15] (-1,0) -- (-6.5,0) -- (-6.5,6.5) -- (-1,6.5);
\draw[white, thick] (-6.5,0) grid (6.5,6.5);
\draw[dashed, dgreen!90, thick] (-1,0) -- (-1,6.5);
\draw[dashed, dblue!90, thick] (1,0) -- (1,6.5);
\draw[->] (0,-0.5) -- (0,6.5);
\draw[->] (-6.5,0) -- (6.5,0);
\draw[Apricot!90, thick] (0,0) -- (0,6.5);
\draw[thick, dblue, ->] (4,4) -- (3,4);
\draw[thick, dblue, ->] (4,4) -- (4,5);
\draw[thick, dblue, ->] (4,4) -- (5,5);
\draw[thick, dblue, ->] (4,4) -- (5,4);
\draw[thick, dblue, ->] (4,4) -- (4,3);
\draw[thick, nred, ->] (0,4) -- (1,4);
\draw[thick, nred, ->] (0,4) -- (-1,4);
\draw[thick, nred, ->] (0,4) -- (0,3);
\draw[thick, nred, ->] (0,4) -- (0,5);
\draw[thick, nred, ->] (0,4) -- (-1,5);
\draw[thick, dgreen, ->] (-4,4) -- (-5,4);
\draw[thick, dgreen, ->] (-4,4) -- (-4,5);
\draw[thick, dgreen, ->] (-4,4) -- (-4,3);
\draw[thick, dgreen, ->] (-4,4) -- (-5,5);
\draw[thick, dgreen, ->] (-4,4) -- (-3,3);
\end{scope}
\end{tikzpicture}
   \caption{Solving a walk model in the three-quadrant cone is equivalent to solving a space inhomogeneous model in a half-plane}
  \label{fig:half-plane_split}
\end{figure}
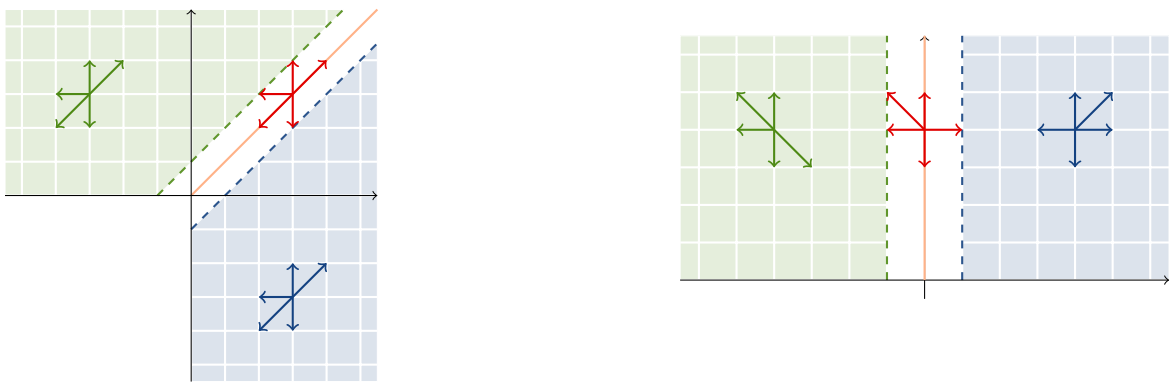

\begin{figure}[htb]
\centering
\begin{tikzpicture}
\begin{scope}[scale=0.5, xshift=0cm, yshift=0cm]
\draw[white, fill=dblue!15] (1,0) -- (1,6.5) -- (6.5,6.5) -- (6.5,0);
\draw[white, fill=dgreen!15] (-1,0) -- (-6.5,0) -- (-6.5,6.5) -- (-1,6.5);
\draw[white, thick] (-6.5,0) grid (6.5,6.5);
\draw[dashed, dgreen!90, thick] (-1,0) -- (-1,6.5);
\draw[dashed, dblue!90, thick] (1,0) -- (1,6.5);
\draw[->] (0,-0.5) -- (0,6.5);
\draw[->] (-6.5,0) -- (6.5,0);
\draw[Apricot!90, thick] (0,0) -- (0,6.5);
\draw[thick, dblue, ->] (4,4) -- (3,4);
\draw[thick, dblue, ->] (4,4) -- (5,4);
\draw[thick, dblue, ->] (4,4) -- (5,5);
\draw[thick, dblue, ->] (4,4) -- (3,3);
\draw[thick, nred, ->] (0,4) -- (1,4);
\draw[thick, nred, ->] (0,4) -- (-1,4);
\draw[thick, nred, ->] (0,4) -- (1,5);
\draw[thick, nred, ->] (0,4) -- (-1,5);
\draw[thick, dgreen, ->] (-4,4) -- (-3,4);
\draw[thick, dgreen, ->] (-4,4) -- (-5,4);
\draw[thick, dgreen, ->] (-4,4) -- (-3,3);
\draw[thick, dgreen, ->] (-4,4) -- (-5,5);
\end{scope}

\begin{scope}[scale=0.5, xshift=9cm, yshift=0cm]
\draw[white, fill=dpurple!15] (0,0) -- (0,6.5) -- (6.5,6.5) -- (6.5,0);
\draw[white, thick] (0,0) grid (6.5,6.5);
\draw[->] (0,0) -- (0,6.5);
\draw[->] (0,0) -- (6.5,0);
\draw[dpurple,thick,->] (4,4) -- (5,4);
\draw[dpurple,thick,->] (4,4) -- (5,5);
\draw[dpurple,thick,->] (4,4) -- (3,4);
\draw[dpurple,thick,->] (4,4) -- (3,3);
\draw[dpurple,thick,->] (4,0) -- (5,0);
\draw[dpurple,thick,->] (4,0) -- (5,1);
\draw[dpurple,thick,->] (4,0) -- (3,0);
\draw[dpurple,thick,->>] (0,4) -- (1,4);
\draw[dpurple,thick,->>] (0,4) -- (1,5);
\draw[dpurple,thick,->>] (0,0) -- (1,0);
\draw[dpurple,thick,->>] (0,0) -- (1,1);
\end{scope}

\begin{scope}[scale=0.5, xshift=19.5cm, yshift=0cm]
\draw[white, fill=gray!25] (0,0) -- (0,6.5) -- (6.5,6.5) -- (6.5,0);
\draw[white, thick] (0,0) grid (6.5,6.5);
\draw[->] (0,0) -- (0,6.5);
\draw[->] (0,0) -- (6.5,0);
\draw[thick, black] (0,0)--(1,1);
\draw[thick, black] (1,1)--(2,1);
\draw[thick, black] (2,1)--(1,0);
\draw[thick, black] (1,0)--(4,0);
\draw[thick, black] (4,0)--(5,1);
\draw[thick, black] (5,1)--(3,1);
\draw[thick, black] (3,1)--(5,3);
\draw[thick, black] (5,3)--(2,3);
\draw[thick, black] (2,3)--(1,2);
\draw[thick, black] (1,2)--(0,2);
\draw[thick, black] (0,2)--(2,4);
\draw[thick, black] (2,4)--(3,4);
\draw[thick, black] (3,4)--(4,5);
\draw[thick, black] (4,5)--(3,5);
\draw[thick, black] (3,5)--(4,6);
\draw[thick, black,->] (4,6)--(6,6);
\foreach \x in {1,2,...,6}  \draw[thick, dblue, fill=dblue]  (\x,0) circle (0.1cm);
\foreach \y in {1,2,...,6} \draw[thick, dgreen, fill=dgreen]  (0,\y) circle (0.1cm);
\draw[thick, nred, fill=nred]  (0,0) circle (0.1cm);
\end{scope}
\end{tikzpicture}
   \caption{Solving a symmetric walk model in the three-quadrant cone is equivalent to solving a walks partly killed and reflected in the quarter plane (pictures on the left and in the middle). Walks in the quarter plane with weights on the boundary (right)}
  \label{fig:half-plane_split_symm}
\end{figure}
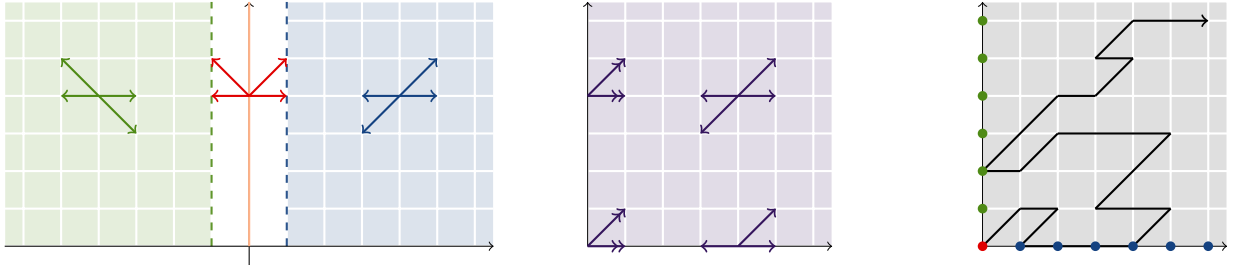

\begin{figure}[htb]
\centering
\begin{tikzpicture}
\begin{scope}[scale=0.45]
\draw[white, fill=gray!20] (-5.5,-5.5) -- (-5.5,5.5) -- (5.5,5.5) -- (5.5,-5.5);
\draw[white, thick] (-5.5,-5.5) grid (5.5,5.5);
\draw[->] (0,-5.5) -- (0,5.5);
\draw[->] (-5.5,0) -- (5.5,0);
\foreach \y in {-4,-2,0,2,4} \foreach \x in {-5,-3,-1,1,3,5} \draw[thick, nred, fill=nred]  (\x,\y) circle (0.1cm);
\foreach \y in {-5,-3,-1,1,3,5} \foreach \x in {-4,-2,0,2,4} \draw[thick, nred, fill=nred]  (\x,\y) circle (0.1cm);
\foreach \y in {-4,-2,0,2,4} \foreach \x in {-4,-2,0,2,4} \draw[thick, dblue, fill=dblue]  (\x,\y) circle (0.1cm);
\foreach \y in {-5,-3,-1,1,3,5} \foreach \x in {-5,-3,-1,1,3,5} \draw[thick, dblue, fill=dblue]  (\x,\y) circle (0.1cm);
\end{scope}

\begin{scope}[scale=0.45, xshift=19cm]
\draw[white, fill=dblue!15] (-5.5,1) -- (-5.5,5.5) -- (5.5,5.5) -- (5.5,1);
\draw[white, fill=dgreen!15] (-5.5,-1) -- (-5.5,-5.5) -- (5.5,-5.5) -- (5.5,-1);
\draw[white, thick] (-5.5,-5.5) grid (5.5,5.5);
\draw[thick, Apricot] (-5.5,0) -- (5.5,0);
\draw[thick, dblue, ->] (0,3) -- (0,4);
\draw[thick, dblue, ->] (0,3) -- (0,2);
\draw[thick, dblue, ->] (0,3) -- (-1,3);
\draw[thick, dblue, ->] (0,3) -- (1,4);
\draw[thick, dgreen, ->] (0,-3) -- (0,-2);
\draw[thick, dgreen, ->] (0,-3) -- (0,-4);
\draw[thick, dgreen, ->] (0,-3) -- (-1,-3);
\draw[thick, dgreen, ->] (0,-3) -- (1,-3);
\draw[thick, dgreen, ->] (0,-3) -- (1,-2);
\draw[thick, dgreen, ->] (0,-3) -- (-1,-4);
\draw[thick, dgreen, ->] (0,-3) -- (1,-4);
\draw[thick, nred, ->] (0,0) -- (0,1);
\draw[thick, nred, ->] (0,0) -- (-1,-1);
\draw[thick, nred, ->] (0,0) -- (1,0);
\end{scope}

\end{tikzpicture}
   \caption{Left: an example of spatially inhomogeneous model studied in \cite{BoMuSi-15,Mu-19,BuKa-19}. Right: block inhomogeneous model in the plane}
  \label{fig:plane_split}
\end{figure}
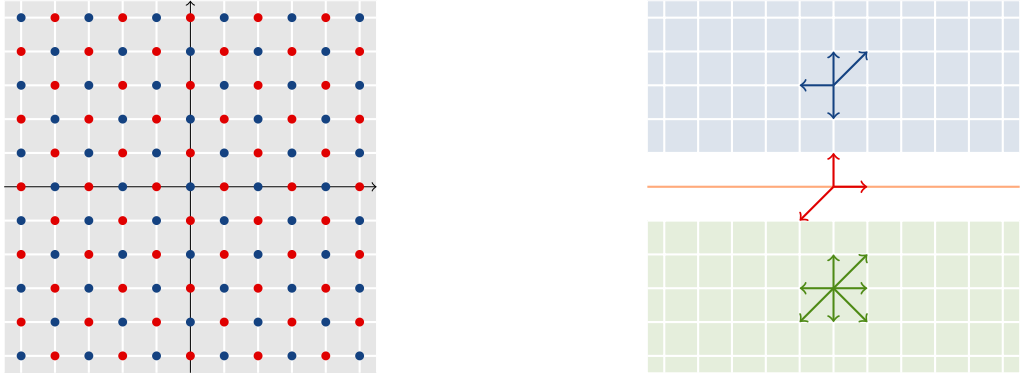
     
     Let us now present the JSQ model. This is a model with (say) two queues, in which (as its name suggests) the arriving customers choose the shortest queue; if the two queues happen to have the same length, then a queue is chosen according to an a priori fixed probability law. From a random walk viewpoint, this means splitting the quarter plane in two octants (cones of opening angle $\frac{\pi}{4}$) as on Figure \ref{fig:JSQ}. In general, the service times depend on the servers, and thus the transition probabilities are different in the upper and lower octants (one speaks about spatially inhomogeneous random walks, and of the general asymmetric JSQ). On the other hand, when the probability laws are symmetric in the diagonal, the model is said symmetric. Classical references are \cite{Fa-79,Ia-79,AdWeZi-91,FoMD-01,KuSu-03} and \cite[Chap.\ 10]{FaIaMa-17}. Surprisingly, the non-symmetric JSQ is still an open problem: a typical interesting problem in queueing theory would be to compute a closed-form expression for the stationary distribution.

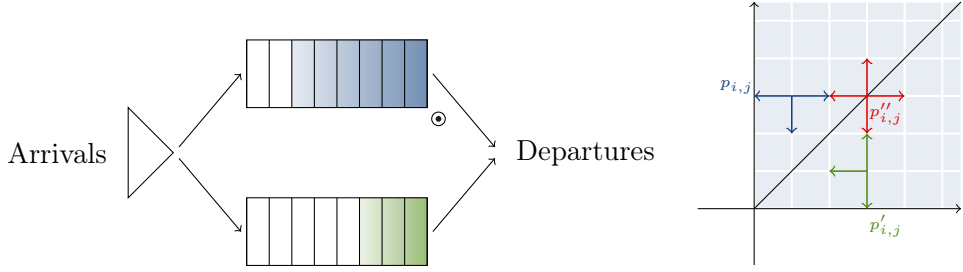
\begin{figure}[ht!]
\centering
\begin{tabular}{c c}
\begin{tikzpicture}
\begin{scope}[scale=0.3]
\draw (6.5,1.5) circle (0.3 cm);
\draw[thick, black!90, fill=black!90] (6.5,1.5) circle (0.08 cm);
\draw [black, left] (-7.75,0) node{Arrivals};
\draw [black, right] (9.5,0) node{Departures};
\draw (-5.25,0 ) -- ( -7.25,-2 ) -- (-7.25,2 ) -- cycle;
\shade[left color=dblue!10, right color=dblue!60] (0,2) rectangle (6,5);
\draw (-2,2) rectangle (6,5);
\foreach \x in {-2,-1,...,6} \draw (\x,2) -- (\x,5)  ;
\shade[left color=dgreen!10, right color=dgreen!60] (3,-2) rectangle (6,-5);
\draw (-2,-2) rectangle (6,-5);
\foreach \x in {-2,-1,...,6} \draw (\x,-2) -- (\x,-5)  ;
\draw [->] (-5,0.25) -- (-2.25,3.5);
\draw [->] (-5,-0.25) -- (-2.25,-3.5);
\draw [->] (6.25,3.5) -- (9,0.25);
\draw [->] (6.25,-3.5) -- (9,-0.25);
\end{scope}
\end{tikzpicture}
&
\begin{tikzpicture}
\begin{scope}[scale=0.5]
\draw[white, fill=dblue!10] (0,0) rectangle (5.5,5.5);
\draw[white, thick] (-1.5,-1.5) grid (5.5,5.5);
\draw[] (0,0) -- (5.5,5.5);
\draw[->] (0,-1.5) -- (0,5.5);
\draw[->] (-1.5,0) -- (5.5,0);
\draw[->, nred, semithick] (3,3) -- (4,3);
\draw[->, nred, semithick] (3,3) -- (2,3);
\draw[->, nred, semithick] (3,3) -- (3,4);
\draw[->, nred, semithick] (3,3) -- (3,2);
\node[below right, nred] at (2.8,3.1) {\tiny{$p''_{i,j}$}};
\draw[->,dgreen, semithick] (3,1) -- (3,2);
\draw[->,dgreen, semithick] (3,1) -- (2,1);
\draw[->,dgreen, semithick] (3,1) -- (3,0);
\node[below right, dgreen] at (2.8,0.1) {\tiny{$p'_{i,j}$}};
\draw[->, dblue, semithick] (1,3) -- (0,3);
\draw[->, dblue, semithick] (1,3) -- (1,2);
\draw[->, dblue, semithick] (1,3) -- (2,3);
\node[above left, dblue] at (0.25,2.8) {\tiny{$p_{i,j}$}};
\end{scope}
\end{tikzpicture}
\end{tabular}
\caption{Left: the JSQ model can be represented as a system of two queues, in which the customers choose the shortest one (the green one, on the picture). Right: representation of the JSQ model as an inhomogeneous random walk in the quadrant}
\label{fig:JSQ}
\end{figure}

Let us briefly notice that quadrant walks could also be treated with a JSQ approach, by decomposing the quarter plane into two octants as on Figure \ref{fig:JSQ}, see e.g.\ \cite{KuSu-03} for asymptotic results.

\paragraph{Main results: a contour-integral expression for the generating function.}
Throughout this paper we will do the following assumption:
\begin{enumerate}[label=(H),ref={\rm (H)}]
     \item\label{main_hyp}The step set $\mathcal S$ is symmetric (i.e., if $(i,j)\in\mathcal S$ then $(j,i)\in\mathcal S$) and does not contain the jumps $(-1,1)$ and $(1,-1)$.
\end{enumerate}
An exhaustive list of which step sets obey \ref{main_hyp} is given on Figures \ref{fig:symmetric_models_finite} and \ref{fig:symmetric_models_infinite}. We are not able to deal with asymmetric walks (as we are unable to solve the asymmetric JSQ model, see above), because of the complexity of the functional equations. The jumps $(-1,1)$ and $(1,-1)$ are discarded for similar reasons: they would lead to additional terms in the functional equation (see Figure \ref{fig:diag_bigstep}). 

\begin{figure}[htb]
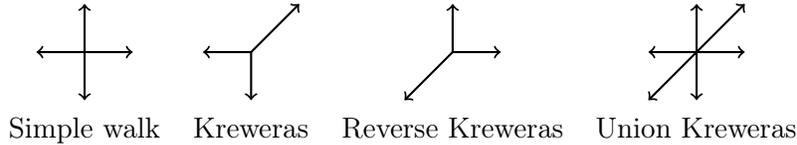
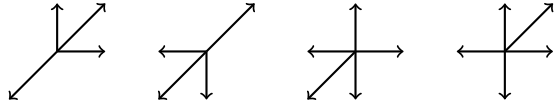

  \centering
  \begin{subfigure}[b]{1\textwidth}
        \centering 
  \begin{tabular}{ccccc}
    $\ \diagr{E,W,N,S}\ $ & $\ \diagr{NE,S,W}\ $  & $\ \diagr{N,E,SW}\ $ & $\ \diagr{NE,S,W,N,E,SW}\ $
\\
Simple walk&Kreweras&Reverse Kreweras & Union Kreweras
  \end{tabular}
   \caption{Symmetric models with a finite group. The notion of group associated to a model is recalled in Section \ref{sec:preliminaries}}
  \label{fig:symmetric_models_finite}
      \end{subfigure}

      \begin{subfigure}[b]{1\textwidth}
  \centering
  \begin{tabular}{cccc}
    $\ \diagr{E,NE,N,SW}\ $ &$\ \diagr{W,NE,S,SW}\ $ & $\ \diagr{E,W,N,S,SW}\ $& $\ \diagr{E,W,N,S,NE}\ $ 
  \end{tabular}
   \caption{Symmetric models with infinite group}
  \label{fig:symmetric_models_infinite}
    \end{subfigure}
     \caption{Symmetric models with no jumps $(-1,1)$ and $(1,-1)$}\label{fig:symmetric_models}
\end{figure}

Our main result is a contour-integral expression for the diagonal section
\begin{equation*}
     D(x)=\sum_{n\geq 0,i\geq 0}c_{i,i}(n)x^it^n.
\end{equation*}
We shall see later that knowing $D(x)$ actually suffices to give a complete solution to the problem (i.e., to find an expression for $C(x,y)$ in \eqref{eq:generating_function_3/4}). Let us postpone to Theorem \ref{thm:first_main} the very precise statement, and instead let us give now the main idea and the shape of the solution. We will show that
\begin{equation}
\label{eq:presentation_main_theorem}
	D(x)=w'(x)f(w(x))\int g(u,w(z)) \frac{w'(z)}{w(x)-w(z)}\dz,
\end{equation}
where $f$ and $g$ are algebraic functions. The integral in \eqref{eq:presentation_main_theorem} is taken over a quartic curve, constructed from the step set of the model. The function $w$ is interpreted as a conformal mapping for the domain bounded by the quartic, and its algebraic nature heavily depends on the model under consideration: it can be algebraic (finite group case) or non-D-finite (otherwise).

\paragraph{Five consequences of our main results.} Our first contribution is about methodology: we show that under the symmetry condition \ref{main_hyp}, three-quadrant walk models are exactly solvable, in the sense that their generating function admits an explicit (contour-integral) expression \eqref{eq:presentation_main_theorem}.

\medskip

The second point is that our techniques allow to compare walks in a quadrant and walks in three quadrants. More precisely, it is proved in \cite{Ra-12} that the generating function counting quadrant walks ending on the horizontal axis can typically be expressed as
\begin{equation}
\label{eq:presentation_1/4}
     \widetilde f(x)\int \widetilde g(z)\frac{w'(z)}{w(x)-w(z)}\dz,
\end{equation}
with the same function $w$ as in \eqref{eq:presentation_main_theorem} but different functions $\widetilde f$ (rational) and $\widetilde g$ (algebraic). Though simpler, Equation~\eqref{eq:presentation_1/4} is quite similar to \eqref{eq:presentation_main_theorem}. This similarity opens the way to prove combinatorial formulas relating the two models.

\medskip

Our third corollary is a partial answer to two questions raised by Bousquet-M\'elou in \cite{BM-16}, that we briefly recall: first, could it be that for any step set associated with a finite group, the generating function $C(x, y)$ is D-finite? Second, could it be that for the four step sets [Kreweras, reverse Kreweras, union Kreweras (see Figure \ref{fig:symmetric_models_finite}) and Gessel (Figure \ref{fig:some_models})], for which [the quadrant generating function] is known to be algebraic, $C(x, y)$ is also algebraic? 

The expression \eqref{eq:presentation_main_theorem} rather easily entails that if $w$ is algebraic (which will correspond to the finite group case, see Section \ref{sec:preliminaries}), the generating function $D(x)$ is D-finite, being the Cauchy integral of an algebraic function. On the other hand, when the group is infinite the function $w$ is non-D-finite by \cite[Thm.\ 2]{Ra-12}, and the expression \eqref{eq:presentation_main_theorem} uses non-D-finite functions (note, this does not a priori imply that $D(x)$ itself is non-D-finite, but does provide some evidence). 

\medskip

Next, although we do not solve them, the expression \eqref{eq:presentation_main_theorem} provides a way to attack the following questions:
\begin{itemize}
     \item Starting from the integral \eqref{eq:presentation_1/4}, various asymptotic questions concerning quadrant models are solved in \cite{FaRa-12} (asymptotics of the excursions, of the number of walks returning to one axis, etc.). Similar arguments should lead to the asymptotics of walks in three quadrants. Remember, however, that the asymptotics of the excursion sequence is already found in \cite{Mu-19}.
     \item A further natural question (still unsolved in the quadrant case) is to find, in the finite group case, a concrete differential equation (or minimal polynomial in case of algebraicity) for the generating function, starting from the contour integrals \eqref{eq:presentation_main_theorem} or \eqref{eq:presentation_1/4}. It seems that the technique of creative telescoping could be applied to the contour integral expressions.
     \item Several interesting (and sometimes surprising) combinatorial identities relating quadrant walks to three-quadrant walks are proved in \cite{BM-16} (in particular, a proof of the former Gessel's conjecture by means of simple walks in $\mathcal C$ and the reflection principle). Moreover, Bousquet-M\'elou asks in \cite{BM-16} whether $C(x,y)$ could differ from (a simple D-finite series related to) the quadrant generating function by an algebraic series? Taking advantage of the similarity between \eqref{eq:presentation_main_theorem} and \eqref{eq:presentation_1/4} provides a starting point to answer this question.
\end{itemize}

Finally, along the way of proving our results, we develop a noteworthy concept of anti-Tutte's invariant, namely a function $g$ such that ($\overline{y}$ denoting the complex conjugate number of $y\in\mathbb C$)
\begin{equation}
\label{eq:anti_Tutte}
     g(y)=\frac{1}{g(\overline{y})}
\end{equation}
when $y$ lies on the contour of \eqref{eq:presentation_main_theorem}. The terminology comes from \cite{BeBMRa-17}, where a function $g$ satisfying to $g(y)=g(\overline{y})$ is interpreted as a Tutte invariant and is strongly used in solving the models. Originally, Tutte introduced the notion of invariant to solve a functional equation counting colored planar triangulations, see \cite{Tu-95}.  Tutte's equation is rather close to functional equations arising in two-dimensional counting problems. Interestingly, a function $g$ as in \eqref{eq:anti_Tutte} appears in the book \cite{CoBo-83}, which proposes an analytic approach to quadrant walk problems (the latter is more general than \cite{FaIaMa-17} in the sense that it works for arbitrarily large positive jumps, i.e., not only small steps). In \cite{CoBo-83} it is further assumed that $g(\overline{y})=\overline{g(y)}$, so that with \eqref{eq:anti_Tutte} one has $\vert g(y)\vert=1$, and $g$ may be interpreted as a conformal mapping from the domain bounded by contour of \eqref{eq:presentation_main_theorem} onto the unit disc.

\paragraph{Equations with (too) many unknowns.}
What about non-symmetric models? From a functional equation viewpoint, the latter are close to random walks with big jumps \cite{FaRa-15,BoBMMe-18} or random walks with catastrophes \cite{BaWa-17}, in the sense that the functional equation has more than two unknowns in its right-hand side. One idea to get rid of these extra terms is to transform the initial functional equation, as in \cite{BM-16}, where Bousquet-M\'elou solves the simple and diagonal models, starting from non-symmetric points ($(-1,0)$, for instance). Another idea, present in \cite{BoBMMe-18}, is to extend the kernel method by computing weighted sums of several functional equations, each of them being an algebraic substitution of the initial equation. However, finding such combinations is very difficult in general. 

From the complex analysis counterpart \cite{FaIaMa-17,Ra-12,FaRa-15}, equations with many unknowns become systems of boundary value problems, which seem not to have a solution in the literature. It is also shown in \cite[Chap.\ 10]{FaIaMa-17} that the asymmetric JSQ is equivalent to solving an integral Fredholm equation for the generating function, but again, no closed-form expression seems to exist.

\paragraph{A conjecture.} 
Although it is not directly inspired by our work, let us state the following. Consider an arbitrary finite group step set $\mathcal S$ (not necessarily satisfying to \ref{main_hyp} but with small steps). We conjecture that the generating function for walks in three quadrants $C(x,y)$ is algebraic as soon as the starting point $(i_0,j_0)\in\mathcal C$ is such that $i_0=-1$ or $j_0=-1$.

This conjecture is motivated by an analogy with the quarter plane, in which the following result holds: a finite group model (having group $G$) with starting point at $(i,j)$ is algebraic if and only if the orbit-sum 
     \begin{equation*}
     \sum_{g\in G}(-1)^gg(x^{i+1}y^{j+1})
     \end{equation*}
is identically zero, see \cite{BMMi-10,BoKa-10,KuRa-12}. Taking $i=-1$ in the sum above (which obviously is not possible in the quadrant case!) yields a zero orbit-sum---more generally, the orbit-sum of any function depending on only one of the two variables $x,y$ is zero.


\paragraph{Structure of the paper.}$ $

$\bullet$ Section \ref{sec:preliminaries}: statement of various functional equations satisfied by the generating functions (in particular Lemma \ref{lem:functional_equation_sym}), definition of the group of the model, study of the zero-set of the kernel

$\bullet$ Section \ref{sec:expression_GF}: statement of a boundary value problem (BVP) satisfied by the diagonal generating function (Lemma \ref{lem:BVP}), resolution of the BVP (Theorems \ref{thm:first_main} and \ref{thm:second_main})

$\bullet$ Appendix \ref{app:expression_Gluing}: list and properties of conformal mappings used in Theorems \ref{thm:first_main} and \ref{thm:second_main}

$\bullet$ Appendix \ref{app:RHBVP}: important statements from the theory of BVP

$\bullet$ Appendix \ref{app:proof_lem1}: proof of the main functional equation stated in Lemma \ref{lem:functional_equation_sym}

\paragraph{Acknowledgments.} We are most grateful to Marni Mishna for her constant support and many enlightening discussions. We would like to also thank Alin Bostan, Irina Ignatiouk-Robert, Sami Mustapha and Michael Wallner for various discussions. Finally, we thank an anonymous referee for his/her numerous suggestions, which in particular led us to obtain series expansions of the contour integrals given in our main theorem.

\section{Preliminaries}
\label{sec:preliminaries}

\subsection{Kernel functional equations}
\label{subsec:kernel_functional_equations}

The starting point is to write a functional equation satisfied by the generating function \eqref{eq:generating_function_3/4}, which, as explained in the introduction, translates the step-by-step construction of a walk. Before dealing with this functional equation, let us define some important objects. 

First of all, a step set $\mathcal S\subset \{-1,0,1\}^2$ is characterized by its inventory (or jump polynomial) $\sum_{(i,j)\in\mathcal{S}}x^i y^j$ as well as by the associated kernel
\begin{equation}
\label{eq:kernel}
     K(x,y)=xy \Bigg(t\sum_{(i,j)\in\mathcal{S}}x^i y^j -1\Bigg).
\end{equation}
The kernel is a polynomial of degree two in $x$ and $y$, which we can write as
\begin{equation}
\label{eq:kernel_expanded}
     K(x,y)=\widetilde{a}(y)x^2+\widetilde{b}(y)x+\widetilde{c}(y) = a(x)y^2+b(x)y+c(x),
\end{equation}
where
\begin{equation}
\label{eq:coeff_kernel}
\left\{
\begin{array}{l l l}
a(x)=tx\sum_{(i,1)\in\mathcal{S}}x^i, & b(x)=tx\sum_{(i,0)\in\mathcal{S}}x^i -x, & c(x)=tx\sum_{(i,-1)\in\mathcal{S}}x^i,\\
\widetilde{a}(y)=ty\sum_{(1,j)\in\mathcal{S}}y^j, & \widetilde{b}(y)=ty\sum_{(0,j)\in\mathcal{S}}y^j -y, & \widetilde{c}(y)=ty\sum_{(-1,j)\in\mathcal{S}}y^j.
\end{array}
\right.
\end{equation}
Define further $\delta_{-1,-1}=1$ if $(-1,-1)\in\mathcal S$ and $\delta_{-1,-1}=0$ otherwise. In the three-quarter plane, we can generalize Equation~(12) in \cite[Sec.\ 2.1]{BM-16} and deduce the following equation satisfied by $C(x,y)$ defined in \eqref{eq:generating_function_3/4}:
\begin{equation}
\label{eq:functional_equation_3/4}
     K(x,y)C(x,y)=c(x)C_{-0}(x^{-1})+\widetilde{c}(y)C_{0-}(y^{-1})-t\delta_{-1,-1} C_{0,0}-xy,
\end{equation}
where
\begin{equation*}
     C_{-0}(x^{-1})= \sum_{n\geq0,i\leq0} c_{i,0}(n)x^{i}t^{n},\quad
     C_{0-}(y^{-1}) = \sum_{n\geq0,j\leq0} c_{0,j}(n)y^{j}t^{n}\quad\text{and}\quad
     C_{0,0}=\sum_{n\geq0}c_{0,0}(n)t^{n}.
\end{equation*}
In comparison, let us recall the standard functional equation in the case of the quarter plane
\begin{equation*}
     \mathcal Q=\{(i,j)\in\mathbb Z^2: i\geq0 \text{ and } j\geq0\}.
\end{equation*}
By \cite[Lem.\ 4]{BMMi-10} and using similar notation as above, the generating function
\begin{equation}
\label{eq:generating_function_1/4}
     Q(x,y)=\sum_{n\geq0}\sum_{(i,j)\in\mathcal Q}q_{i,j}(n)x^{i}y^{j} t^{n}
\end{equation}
satisfies the equation
\begin{equation}
\label{eq:functional_equation_1/4}
     K(x,y)Q(x,y)=c(x)Q_{-0}(x)+\widetilde{c}(y)Q_{0-}(y)-t\delta_{-1,-1} Q_{0,0}-xy,
\end{equation}
where
\begin{multline}
\label{eq:sections_1/4}
     Q_{-0}(x)= \sum_{n\geq0,i\geq 0} q_{i,0}(n)x^{i}t^{n},\quad
     Q_{0-}(y)=\sum_{n\geq0,j\geq 0} q_{0,j}(n)y^{j}t^{n}\quad\text{and}\quad
     Q_{0,0}=\sum_{n\geq0}q_{0,0}(n)t^{n}.
\end{multline}
At first sight, the two functional equations \eqref{eq:functional_equation_3/4} and \eqref{eq:functional_equation_1/4} are very similar. However, due to the presence of infinitely many terms with positive and negative valuations in $x$ or $y$, the first one is much more complicated, and almost all the methodology of \cite{BMMi-10,Ra-12} (namely, performing algebraic substitutions or evaluating the functional equation at well-chosen complex points) breaks down, as the series are no longer convergent. 

The idea in \cite{BM-16} is to see $\mathcal C$ as the union of three quarter planes, and to state for each quadrant a new equation, which is more complicated but (by construction) may be evaluated. Our strategy follows the same line: we split the three-quadrant cone in two domains (two cones of opening angle $\frac{3\pi}{4}$, see Figure \ref{fig:three-quarter_split}) and write two functional equations, one for each domain.

\subsection{Functional equations for the $\frac{3\pi}{4}$-cone walks}

In this section and in the remainder of our paper, we shall use two different step sets, $\widehat{\mathcal S}$ and $\mathcal S$. The first one, $\widehat{\mathcal S}$, will refer to the main step set, corresponding to the walks in the three-quarter plane we are counting. Accordingly, we will rename all quantities associated to the main step set with a hat, for instance the kernel will be denoted by $\widehat{K}(x,y)$. The second step set, $\mathcal S$, is associated to $\widehat{\mathcal S}$ after the change of variable \eqref{eq:change_var}. Quantities with no hat will be associated to the step set $\mathcal S$, for instance the kernel $K(x,y)$. In order not to make the notation to heavy and because in this case there is no possible ambiguity, the only exception to this rule will be the coefficients $c_{i,j}(n)$ (with no hat), which will always correspond to $\widehat{\mathcal S}$.

Having said that, we start by splitting the domain of possible ends of the walks into three parts: the diagonal, the lower part $\{i\geq 0, j\leq i-1\}$ and the upper part $\{j\geq 0, i\leq j-1\}$, see Figure \ref{fig:three-quarter_split}. We may write
\begin{equation}
\label{eq:equation_cut3parts}
     C(x,y)=\widehat{L}(x,y)+\widehat{D}(x,y)+\widehat{U}(x,y),
\end{equation}
where 
\begin{equation*}
\widehat{L}(x,y)=\sum\limits_{\substack{i \geq 0 \\  j\leq i-1 \\n\geq 0}}c_{i,j}(n)x^i y^j t^n, \quad
\widehat{D}(x,y)=\sum\limits_{\substack{i\geq 0 \\ n\geq 0}}c_{i,i}(n)x^i y^i t^n \quad\text{and}\quad
\widehat{U}(x,y)=\sum\limits_{\substack{j \geq 0 \\ i\leq j-1 \\n\geq 0}}c_{i,j}(n)x^i y^j t^n.
\end{equation*}
Let $\delta_{i,j}=1$ if $(i,j)\in\mathcal S$ and $0$ otherwise.

\begin{figure}[htb]
\centering
\begin{tikzpicture}
\begin{scope}[scale=0.45]
\tikzstyle{quadri}=[rectangle,draw,fill=white]
\draw[white, fill=dblue!15] (0,-1) -- (5.5,4.5) -- (5.5,-5.5) -- (0,-5.5);
\draw[white, fill=dgreen!15] (-1,0) -- (-5.5,0) -- (-5.5,5.5) -- (4.5,5.5);
\draw[white, thick] (0,-5.5) grid (5.5,5.5);
\draw[white, thick] (-5.5,0) grid (0,5.5);
\draw[Apricot!90, thick] (0,0) -- (5.5,5.5);
\draw[dashed, dgreen!90, thick] (-1,0) -- (4.5,5.5);
\draw[dashed, dblue!90, thick] (0,-1) -- (5.5,4.5);
\draw[->] (0,-5.5) -- (0,5.5);
\draw[->] (-5.5,0) -- (5.5,0);
\node[dblue!90,quadri] at (3,-3) {$\widehat{L}(x,y)$};
\node[dgreen!90, quadri] at (-3,3) {$\widehat{U}(x,y)$};
\node[Apricot!100, quadri] at (7.4,6.5) {$\widehat{D}(x,y)$};
\node[dgreen!100, quadri, dashed] at (2.8,6.5) {$\widehat{D}^u(x,y)$};
\node[dblue!100, quadri, dashed] at (7.6,4) {$\widehat{D}^\ell(x,y)$};
\end{scope}
\end{tikzpicture}
\caption{Decomposition of the three-quarter plane and associated generating functions}
\label{fig:some_sections}
\end{figure}
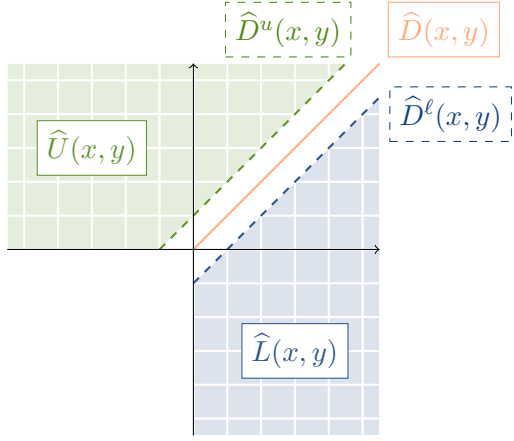

\begin{lem}
\label{lem:functional_equation_sym}
For any step set which satisfies \ref{main_hyp} and starts at $(0,0)$, one has
\begin{multline}
\label{eq:functional_equation_sym}
\widehat{K}(x,y)\widehat{L}(x,y)=-\frac{1}{2}xy+txy\left(\delta_{-1,-1}x^{-1}y^{-1}+ \delta_{-1,0}x^{-1} \right)\widehat{L}_{0-}(y^{-1}) + \frac{1}{2}t\delta_{-1,-1}\widehat{D}(0,0)
\\
-xy\left(-\frac{1}{2}+t\left(\frac{1}{2}(\delta_{1,1}xy+\delta_{-1,-1}x^{-1}y^{-1})+ \delta_{0,-1}y^{-1} + \delta_{1,0}x \right) \right)\widehat{D}(x,y),
\end{multline}
with $\widehat{L}_{-0}(y^{-1})= \sum_{n\geq0,j<0} c_{0,j}(n)y^{j}t^{n}$.
\end{lem}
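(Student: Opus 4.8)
The plan is to derive \eqref{eq:functional_equation_sym} from the step-by-step construction of walks confined to $\mathcal C$, treating the three pieces $\widehat{L},\widehat{D},\widehat{U}$ of \eqref{eq:equation_cut3parts} separately. The structural observation that makes this feasible is that, under \ref{main_hyp}, one step can never carry a walk from the strict lower cone $\{i\ge 0,\ j\le i-1\}$ into the strict upper cone $\{j\ge 0,\ i\le j-1\}$ (or conversely): such a jump $(s,\sigma)$ would need $s-\sigma=\pm 2$, hence be $(1,-1)$ or $(-1,1)$, which \ref{main_hyp} forbids. Consequently $\widehat{L}$ communicates only with the diagonal $\widehat{D}$, so a closed equation for $\widehat{L}$ becomes attainable; this is why the right-hand side of \eqref{eq:functional_equation_sym} contains no term in $\widehat{U}$.

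Concretely, I would start from the recursion $c_{i,j}(n)=\sum_{(s,\sigma)\in\widehat{\mathcal S},\,(i-s,j-\sigma)\in\mathcal C}c_{i-s,j-\sigma}(n-1)$ (valid for $n\ge 1$, together with $c_{0,0}(0)=1$), multiply by $x^iy^jt^n$, and sum over the support $\{i\ge 0,\ j\le i-1\}$ of $\widehat{L}$; after the usual reindexing the kernel factor $\widehat{K}(x,y)$ appears on the left and one is left with the discrepancy terms. These are of three kinds: (a) walks sitting on the negative $y$-axis points $(0,j)$, $j<0$ (which belong to $\widehat{L}$), attempting a forbidden jump $(-1,0)$ or $(-1,-1)$ into the excluded quadrant---these produce the terms carrying $\widehat{L}_{0-}(y^{-1})$; (b) walks stepping from $\widehat{D}$ into $\widehat{L}$ and walks stepping from the close-to-diagonal line $\{j=i-1\}$ onto $\widehat{D}$---these produce the $\widehat{D}(x,y)$ terms together with close-to-diagonal generating functions (the sections $\widehat{D}^{\ell},\widehat{D}^{u}$ of Figure~\ref{fig:some_sections}); (c) the corner $(0,0)$, which lies on the diagonal but whose jump $(-1,-1)$ leaves $\mathcal C$ and yields the term in $\widehat{D}(0,0)=C_{0,0}$. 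Performing the same analysis on the diagonal produces a second identity relating $\widehat{D}$, $\widehat{D}^{\ell}$, $\widehat{D}^{u}$ and $\widehat{D}(0,0)$; feeding in the symmetry relations forced by \ref{main_hyp}, namely $c_{i,j}=c_{j,i}$ (so $\widehat{U}(x,y)=\widehat{L}(y,x)$, $\widehat{D}(x,y)$ depends only on $xy$, and $\widehat{D}^{u}(x,y)=\widehat{D}^{\ell}(y,x)$) together with $\delta_{i,j}=\delta_{j,i}$, one can solve out $\widehat{D}^{\ell}$ and $\widehat{D}^{u}$ and retain only $\widehat{L}_{0-}$, $\widehat{D}(x,y)$ and $\widehat{D}(0,0)$ on the right.

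The prefactors $\tfrac12$ reflect how the shared diagonal is apportioned between the two cones. Since $C=\widehat{L}+\widehat{D}+\widehat{U}$ with $\widehat{L}$ and $\widehat{D}$ carrying only non-negative powers of $x$, the series $\widehat{K}(x,y)\bigl(\widehat{L}+\tfrac12\widehat{D}\bigr)$ has non-negative powers of $x$ while its $x\leftrightarrow y$ image has non-negative powers of $y$, and the two sum to $\widehat{K}(x,y)C(x,y)$, i.e.\ to the right-hand side of \eqref{eq:functional_equation_3/4}. The terms of that right-hand side lying in \emph{both} half-spaces---notably $-xy$ and $-t\delta_{-1,-1}C_{0,0}$---are therefore split symmetrically, which is precisely the source of the coefficients $\tfrac12$ and of the term $\tfrac12 t\delta_{-1,-1}\widehat{D}(0,0)$. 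Writing $\widehat{K}\widehat{L}=\widehat{K}\bigl(\widehat{L}+\tfrac12\widehat{D}\bigr)-\tfrac12\widehat{K}\widehat{D}$ and substituting the diagonal identity then yields \eqref{eq:functional_equation_sym} after simplification; checking that the three regional identities for $\widehat{L}$, $\widehat{D}$, $\widehat{U}$ add back up to \eqref{eq:functional_equation_3/4} is a convenient way to pin down all the coefficients.

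The hard part is not conceptual but the combinatorial bookkeeping: one must enumerate exhaustively the boundary and corner configurations of the step-by-step construction---forbidden jumps off the $y$-axis, jumps to and from the diagonal, the exceptional behaviour at $(0,0)$ and at the excluded corner $(-1,-1)$---and then verify that the auxiliary close-to-diagonal generating functions genuinely cancel once the regional equations are combined through the symmetry of $\widehat{\mathcal S}$. The full details are carried out in Appendix~\ref{app:proof_lem1}.
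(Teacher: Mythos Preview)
Your proposal is correct and follows essentially the same route as the paper: derive a step-by-step equation for $\widehat{L}$ involving the close-to-diagonal section $\widehat{D}^{\ell}$, derive a second step-by-step equation for $\widehat{D}$ (which, by the symmetry $\widehat{D}^{u}(x,y)=\widehat{D}^{\ell}(y,x)$, contains $2\widehat{D}^{\ell}$), and substitute the latter into the former to eliminate $\widehat{D}^{\ell}$. Your additional heuristic---reading the $\tfrac12$ coefficients as the symmetric splitting of the global identity \eqref{eq:functional_equation_3/4} between the two half-cones---is not in the paper's proof but is a pleasant consistency check rather than a different argument.
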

The proof of Lemma \ref{lem:functional_equation_sym} is postponed to Appendix \ref{app:proof_lem1}, as it is elementary but a bit technical. The functional equation for non-symmetric models (as well as for symmetric models with non-diagonal starting points) is commented in Appendix~\ref{app:proof_lem1}. Here we will only consider symmetric models starting at $(0,0)$, but notice that our study can be easily generalized to arbitrary diagonal starting points. 

In order to simplify the functional equation \eqref{eq:functional_equation_sym}, we perform the change of variable
\begin{equation}
\label{eq:change_var}
     \varphi(x,y)=(xy, x^{-1}).
\end{equation}
Then \eqref{eq:functional_equation_sym} becomes 
\begin{equation}
\label{eq:functional_equation_octant}
K(x,y)L(x,y)=c(x)L_{-0}(x)-x\left(x\widetilde{a}(y)+\frac{\widetilde{b}(y)}{2}\right)D(y)+\frac{1}{2}txD(0)-\frac{1}{2}xy,
\end{equation}
where $K(x,y) =xy(t\sum_{(i,j)\in \mathcal{S}}x^{i-j}y^{i}-1)=x\widehat{K}(\varphi(x,y))$, $L_{-0}(x)=\sum_{n\geq 0,j\geq 1} c_{0,-j}x^j t^n$ and similarly
\begin{equation}
\label{eq:def_diagonal}
     L(x,y) = \widehat{L}(\varphi(x,y)) = \sum\limits_{\substack{i \geq 1 \\  j\geq 0 \\n\geq 0}}c_{j,j-i}(n)x^i y^j t^n 
     \quad \text{and}\quad 
	D(y) = \widehat{D}(\varphi(x,y)) = \sum\limits_{\substack{i\geq 0 \\ n\geq 0}} c_{i,i}(n)y^i t^n.
\end{equation}

The change of coordinates $\varphi$ simplifies the resolution of the problem, as the functional equation \eqref{eq:functional_equation_octant} is closer to a (solvable) quadrant equation; compare with \eqref{eq:functional_equation_1/4}. Throughout the manuscript, functions with (resp.\ without) a hat will be associated to the step set $\mathcal S$ (resp.\ to the step set after change of variable $\varphi$). We have 
\begin{equation*}
     \mathcal{S}=\varphi(\widehat{\mathcal{S}})=\{(i-j,i):(i,j)\in\widehat{\mathcal{S}}\}.
\end{equation*}
For the reader's convenience, we have represented on Table \ref{tab:transform_models} the effect of $\varphi$ on the symmetric models of Figures \ref{fig:symmetric_models_finite} and \ref{fig:symmetric_models_infinite}. We also remark on Figure \ref{fig:diag_bigstep} that the presence of anti-diagonal jumps $(-1,1)$ or $(1,-1)$ would lead to the bigger steps $(-2,-1)$ or $(2,1)$: this is the reason why they are discarded.

\subsection{Group of the model}

In this part and in Section \ref{subsec:Roots_of_the_kernel} as well, we remove the hat from our notation: indeed, the statements are valid for all step sets (with or without hat).

With our notation \eqref{eq:coeff_kernel}, the group of the walk is the dihedral group of bi-rational transformations $\langle\Phi,\Psi\rangle$ generated by the involutions
\begin{equation*}
\Phi(x,y)=\left( \frac{\widetilde{c}(y)}{\widetilde{a}(y)}\frac{1}{x},y\right) \qquad \text{and}\qquad
\Psi(x,y)=\left(x, \frac{c(x)}{a(x)}\frac{1}{y}\right).
\end{equation*}
It was introduced in \cite{Ma-72} in a probabilistic context and further used in \cite{FaIaMa-17,BMMi-10}. The group $\langle\Phi,\Psi\rangle$ may be finite (of even order, larger than or equal to $\geq4$) or infinite. The order of the group for the $79$ non-equivalent quadrant models is computed in \cite{BMMi-10}: there are $23$ models with a finite group ($16$ of order $4$, $5$ of order $6$ and $2$ of order $8$) and $56$ models with infinite order.  

For instance, the simple walk has a group of order $4$, while the three right models on Figure \ref{fig:symmetric_models_finite} have a group of order $6$. Indeed, taking Kreweras model as an example, we have $\Phi(x,y)=(\frac{1}{xy},y)$ and $\Psi(x,y)=(x,\frac{1}{xy})$, and the orbit of $(x,y)$ under the action of $\Phi$ and $\Psi$ is
\begin{equation*}
(x,y)
\overset{\Phi}{\rightarrow}
(\textstyle{\frac{1}{xy}},y)
\overset{\Psi}{\rightarrow}
(\textstyle{\frac{1}{xy}},x)
\overset{\Phi}{\rightarrow}
(y,x)
\overset{\Psi}{\rightarrow}
(y,\textstyle{\frac{1}{xy}})
\overset{\Phi}{\rightarrow}
(x,\textstyle{\frac{1}{xy}})
\overset{\Psi}{\rightarrow}
(x,y).
\end{equation*}

\begin{table}[htb]
    \begin{minipage}[c]{.46\linewidth}
        \centering
       \begin{tabular}{|c | c |}
\hline
Model & Image under $\varphi$ \\
\hline \hline
$\ \diagr{N,S,E,W}\ $ & $\ \diagr{E,W,NE,SW}\ $ \\\hline
$\ \diagr{S,W,NE}\ $ & $\ \diagr{E,N,SW}\ $ \\\hline
$\ \diagr{E,N,SW}\ $ & $\ \diagr{S,W,NE}\ $ \\\hline
$\ \diagr{E, NE, N, W, SW, S}\ $ & $\ \diagr{E, NE, N, W, SW, S}\ $ \\\hline
\end{tabular}
    \end{minipage}
    \hfill%
    \begin{minipage}[c]{.46\linewidth}
        \centering
        \begin{tabular}{|c | c |}
\hline
Model & Image under $\varphi$ \\
\hline \hline
$\ \diagr{E, NE, N, SW}\ $ &  $\ \diagr{NE, N, W, S}\ $ \\\hline
$\ \diagr{NE, W, SW, S}\ $ &$\ \diagr{E, N, SW, S}\ $ \\\hline
$\ \diagr{E, N, W, SW, S}\ $ & $\ \diagr{E, NE, W, SW, S}\ $ \\\hline
$\ \diagr{E, NE, N, W, S}\ $ & $\ \diagr{E, NE, N, W, SW}\ $ \\\hline
        \end{tabular}
    \end{minipage}
\caption{Transformation $\varphi$ on the eight symmetric models (with finite group on the left and infinite group on the right) without the steps $(-1,1)$ and $(1,-1)$. In particular, the simple walk is related by $\varphi$ to Gessel's model. After \cite{BM-16}, this is another illustration that counting simple walks in three-quarter plane is related to counting Gessel walks in a quadrant}
  \label{tab:transform_models}
\end{table}

\begin{figure}[htb]
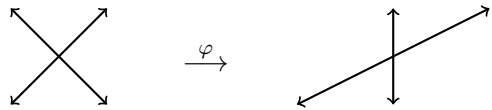

\centering
\begin{tabular}{m{2cm} m{1cm} m{2cm}}
$\ \diagr{NE,SE,NW,SW}\ $ & $\stackrel{\varphi}{\longrightarrow}$  & $\ \diagr{N, S, S2W, N2E}\ $
\end{tabular}
\caption{The diagonal model is transformed by $\varphi$ into a model with bigger steps}
\label{fig:diag_bigstep}
\end{figure}

\subsection{Roots and curves defined by the kernel}
\label{subsec:Roots_of_the_kernel}

We define the discriminants in $x$ and $y$ of the kernel \eqref{eq:kernel_expanded}:
\begin{equation}
\label{eq:discriminants}
     \widetilde{d}(y)=\widetilde{b}(y)^2-4\widetilde{a}(y)\widetilde{c}(y)\qquad \text{and}\qquad d(x)=b(x)^2-4a(x)c(x).
\end{equation}
The discriminant $d(x)$ (resp.\ $\widetilde{d}(y)$) in \eqref{eq:discriminants} is a polynomial of degree three or four. Hence it admits four roots (also called branch points) $x_1, x_2, x_3, x_4$ (resp.\ $y_1, y_2, y_3, y_4$), with $x_4=\infty$ (resp.\ $y_4=\infty$) when $d(x)$ (resp.\ $\widetilde{d}(y)$) is of degree $3$. 
\begin{lem}[Sec.~3.2 in \cite{Ra-12}]
\label{lem:properties_branch_points}
Let $t\in(0,1/\vert\mathcal S\vert)$.
The branch points $x_i$ are real and distinct. Two of them (say $x_1$ and $x_2$) are in the open unit disc, with $x_1 < x_2$ and $x_2>0$. The other two  (say $x_3$ and $x_4$) are outside the closed unit disc, with $x_3>0$ and  $x_3 <  x_4$ if $x_4>0$. The discriminant $d(x)$ is negative on $(x_1,x_2)$ and $(x_3,x_4)$, where if $x_4<0$, the set $(x_3,x_4)$ stands for the union of intervals $(x_3,\infty)\cup(-\infty,x_4)$. Symmetric results hold for the branch points $y_i$.
\end{lem}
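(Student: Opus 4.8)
The plan is to follow the classical analysis of \cite{FaIaMa-17,Ra-12}, exploiting the explicit form of the kernel $K(x,y) = a(x)y^2 + b(x)y + c(x)$ and the probabilistic interpretation of the coefficients. First I would write $d(x) = b(x)^2 - 4a(x)c(x)$ explicitly and note that, from \eqref{eq:coeff_kernel}, $a(x)$, $c(x)$ are polynomials with nonnegative coefficients (times $t$) and $b(x) = tx\sum_{(i,0)\in\mathcal S}x^i - x$; hence $d(x)$ has degree $3$ or $4$, so it has four roots in $\mathbb P^1(\mathbb C)$, with $x_4 = \infty$ exactly when the leading coefficient vanishes. The key point for \emph{reality and distinctness} of the $x_i$ is that $d(x)$ equals the square of $K(x,Y(x))$-type data along the real locus where $y$ is real: more precisely, for fixed real $x>0$ with $a(x)>0$, the quadratic $y\mapsto K(x,y)$ has real roots iff $d(x)\geq 0$, and one shows (via the kernel evaluated at suitable real points, or via Rouch\'e) that $d$ changes sign exactly at four real points. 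The standard trick is to use that the curve $\{K(x,y)=0\}$ is a genus-$0$ or genus-$1$ curve whose real structure is governed by $d$; the assumption $t\in(0,1/\vert\mathcal S\vert)$ guarantees non-degeneracy (the walk is ``subcritical''), which is what forces four simple real branch points rather than coalescences.

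Next I would locate the roots relative to the unit circle. The main tool is a counting argument: for $\vert x\vert = 1$, $x\neq 1$, one shows that $Y(x)$, the small branch of $y$ solving $K(x,y)=0$, satisfies $\vert Y(x)\vert < 1$ strictly, using $\vert t\sum_{(i,j)\in\mathcal S} x^i y^j\vert < 1$ when $\vert x\vert\leq 1$, $\vert y\vert\leq 1$ and $t<1/\vert\mathcal S\vert$ (strict because not all monomials can be simultaneously of modulus $1$ unless $x=y=1$). Evaluating at $x=1$: $d(1) = (t\sum\pm\cdots)$-type expression; a direct computation with $t<1/\vert\mathcal S\vert$ gives $d(1)>0$ and the two $y$-roots at $x=1$ are $1$ and something in $(0,1)$, pinning down the relevant signs. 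Combined with the behaviour at $x=0$ (where $c(0)=0$ forces a root, or $d(0)>0$ depending on the step set) and as $x\to\infty$, one gets by the intermediate value theorem that exactly two branch points $x_1<x_2$ lie in $(-1,1)$ with $x_2>0$ (the positive one is the ``contact point'' of the branch $Y$ with the real axis inside the disc), and the remaining two $x_3>0$, $x_4$ lie outside the closed unit disc, with $x_3<x_4$ when $x_4>0$ and the wrap-around interpretation $(x_3,\infty)\cup(-\infty,x_4)$ when the degree-$4$ leading coefficient is negative so that $x_4<0$.

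Finally, the sign of $d(x)$ on each interval: since $d$ is a real polynomial with four simple real roots $x_1<x_2<x_3<x_4$ (in the cyclic sense on $\mathbb P^1$ when $x_4<0$), it has constant sign on each of the complementary intervals and alternates. To fix which sign occurs where, I would evaluate $d$ at one convenient interior point of $(x_1,x_2)$ — e.g.\ at the positive branch point location or at a point where $K(x,\cdot)$ manifestly has complex-conjugate roots — getting $d<0$ on $(x_1,x_2)$; alternation then forces $d>0$ on $(x_2,x_3)$ and $d<0$ on $(x_3,x_4)$ (with the union-of-intervals convention). The symmetric statements for $y_1,\dots,y_4$ follow by the same argument with $\widetilde a,\widetilde b,\widetilde c$ in place of $a,b,c$. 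The main obstacle I anticipate is the case analysis forced by whether $d$ has degree $3$ or $4$ and whether $x_4$ is positive or negative (equivalently the sign of the leading coefficient of $d$), together with degenerate step sets where $a(0)$ or $c(x)$ vanish identically in low degree — but since this lemma is quoted verbatim from \cite[Sec.~3.2]{Ra-12}, the cleanest route is to invoke that reference directly and only sketch the reduction; all the estimates ultimately rest on the single strict inequality $t\,\vert\mathcal S\vert<1$ plus nonnegativity of the step weights.
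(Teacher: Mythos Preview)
The paper does not prove this lemma at all: it is stated with a direct citation to \cite[Sec.~3.2]{Ra-12} and used as a black box. Your sketch is a reasonable reconstruction of the classical argument from \cite{FaIaMa-17,Ra-12}, and you already note at the end that invoking the reference is the cleanest route; that is exactly what the paper does.
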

Let $Y(x)$ (resp.\ $X(y)$) be the algebraic function defined by the relation $K(x,Y(x))=0$ (resp.\ $K(X(y),y)=0$). Obviously with \eqref{eq:kernel_expanded} and \eqref{eq:discriminants} we have
\begin{equation}
\label{eq:algebraic_expressions_Y_X}
     Y(x)=\frac{-b(x)\pm\sqrt{d(x)}}{2a(x)}\qquad \text{and}\qquad X(y)=\frac{-\widetilde{b}(y)\pm\sqrt{\widetilde{d}(y)}}{2\widetilde{a}(y)}.
\end{equation}
The function $Y$ has two branches $Y_0$ and $Y_1$, which are meromorphic on the cut plane $\mathbb{C} \setminus([x_1, x_2] \cup [x_3, x_4])$. On the cuts $[x_1,x_2]$ and $[x_3,x_4]$, the two branches still exist and are complex conjugate (but possibly infinite at $x_1=0$, as discussed in Lemma \ref{lem:properties_curves}). At the branch points $x_i$, we have $Y_0(x_i)=Y_1(x_i)$ (when finite), and we denote this common value by $Y(x_i)$. 
 
Fix the notation of the branches by choosing $Y_0=Y_-$ and $Y_1=Y_+$ in \eqref{eq:algebraic_expressions_Y_X}. We further fix the determination of the logarithm so as to have $\sqrt{d(x)}>0$ on $(x_2,x_3)$. Then clearly with \eqref{eq:algebraic_expressions_Y_X} we have 
\begin{equation}
\label{eq:global_inequality}
     \vert Y_0\vert \leq \vert Y_1\vert
\end{equation}
on $(x_2,x_3)$, and as proved in \cite[Thm.~5.3.3]{FaIaMa-17}, the inequality \eqref{eq:global_inequality} holds true on the whole complex plane and is strict, except on the cuts, where $Y_0$ and $Y_1$ are complex conjugate.   
 
A key object is the curve $\mathcal L$ defined by
\begin{equation}
\label{eq:curve_L}
     \mathcal L =Y_0([x_1,x_2])\cup Y_1([x_1,x_2])=\{y\in \mathbb C:
     K(x,y)=0 \text{ and } x\in[x_1,x_2]\}. 
\end{equation}
By construction, it is symmetric with respect to the real axis. We denote by $\mathcal G_\mathcal L$ the open domain delimited by $\mathcal L$ and avoiding  the real point at $+\infty$. See Figures~\ref{fig:curves} and \ref{fig:curve_Kreweras} for a few examples. Furthermore, let $\mathcal{L}_0$ (resp.\ $\mathcal{L}_1$) be the upper (resp.\ lower) half of $\mathcal{L}$, i.e., the part of $\mathcal L$ with non-negative (resp.\ non-positive) imaginary part, see Figure \ref{fig:conformal-functions}. Likewise, we define $\mathcal M =X_0([y_1,y_2])\cup X_1([y_1,y_2])$.

\begin{lem}[Lem.~18 in \cite{BeBMRa-17}]
\label{lem:properties_curves}
The curve $\mathcal L$ in \eqref{eq:curve_L} is symmetric in the real axis. It intersects this axis at $Y(x_2)>0$. 

If $\mathcal L$ is unbounded, $Y(x_2)$ is the only intersection point. This occurs if  and only if neither $(-1,1)$ nor $(-1,0)$ belong to $\mathcal S$. In this case, $x_1=0$ and the only point of $[x_1, x_2]$ where at least one branch $Y_i(x)$ is infinite is $x_1$ (and then both branches are infinite there). Otherwise, the curve $\mathcal L$ goes through a second real point, namely $Y(x_1) \leq 0$. 

Consequently, the point $0$ is either in the domain $\mathcal G_\mathcal L$ or on the curve $\mathcal L$. The domain $\mathcal G_\mathcal L$ also contains the (real) branch points $y_1$ and $y_2$, of modulus less than $1$. The other two branch points, $y_3$ and $y_4$, are in the complement of $\mathcal G_\mathcal L \cup\mathcal L$.
\end{lem}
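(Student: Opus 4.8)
The plan is to read the geometry of $\mathcal L$ directly off the explicit branches $Y_0,Y_1$ of \eqref{eq:algebraic_expressions_Y_X} restricted to $[x_1,x_2]$, using the sign of $d$ from Lemma~\ref{lem:properties_branch_points}, and then to transport the resulting information on the real axis to the $y$-plane via the symmetric form of that lemma and the oval structure of the real section $\{K=0\}\cap\mathbb R^{2}$ of the kernel. \emph{Symmetry and real trace.} On $(x_1,x_2)$ one has $d(x)<0$, so $\sqrt{d(x)}$ is purely imaginary and nonzero while $a(x),b(x)\in\mathbb R$; also $a$ does not vanish on $(x_1,x_2)$ — its only zero of modulus $<1$ is $x=0$, and when $a(0)=0$, i.e.\ $(-1,1)\notin\mathcal S$, a direct computation gives $d(0)\geq 0$ so $0\notin(x_1,x_2)$. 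Hence $Y_1(x)=\overline{Y_0(x)}$ with $\operatorname{Im}Y_0(x)\neq 0$ on $(x_1,x_2)$, and taking closures $\mathcal L=Y_0([x_1,x_2])\cup\overline{Y_0([x_1,x_2])}$ is symmetric about $\mathbb R$ and can meet $\mathbb R$ only at the images of $x_1,x_2$, where $d$ vanishes and $Y_0=Y_1=-b/(2a)=:Y(x_i)$ when finite. Assuming as usual (see \cite[Ch.~5]{FaIaMa-17}) that $\mathcal L$ is a simple curve in $\widehat{\mathbb C}$, $\overline{\mathcal G_{\mathcal L}}\cap\mathbb R$ is then the closed real interval with endpoints $Y(x_1),Y(x_2)$ (a half-line when $Y(x_1)$ is infinite).

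\emph{Positivity of $Y(x_2)$ and boundedness.} At $x_2\in(0,1)$ we have $a(x_2)>0$, and $d(x_2)=0$ gives $b(x_2)^{2}=4a(x_2)c(x_2)$ with $c(x_2)>0$, so $b(x_2)\neq 0$ and $Y(x_2)\neq 0$; since $Y(x_2)$ is continuous and nonvanishing in $t\in(0,1/\vert\mathcal S\vert)$ and $Y(x_2)=-b(x_2)/(2a(x_2))\to+\infty$ as $t\to0^{+}$ (using $x_2\to0$, $a(x_2)\to0^{+}$, $b(x_2)/x_2\to-1$), we conclude $Y(x_2)>0$. As for boundedness, $\mathcal L$ is the continuous image into $\widehat{\mathbb C}$ of the compact $[x_1,x_2]$, hence bounded iff $Y_0,Y_1$ have no pole there, i.e.\ iff $a$ has on $[x_1,x_2]$ no zero not cancelled in the numerator. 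Now $a(0)=t$ if $(-1,1)\in\mathcal S$ and $a(0)=0$ otherwise, while $b(0)=t$ iff $(-1,0)\in\mathcal S$. If $(-1,1)\in\mathcal S$ then $a(0)\neq 0$ and, the remaining zeros of $a$ having modulus $\geq 1$ and $[x_1,x_2]$ lying in the open disc, there is no pole, so $\mathcal L$ is bounded. If $(-1,1)\notin\mathcal S$ but $(-1,0)\in\mathcal S$, then $d(0)=b(0)^{2}-4a(0)c(0)=t^{2}>0$, so $0\notin[x_1,x_2]$ (where $d\leq 0$), and again $\mathcal L$ is bounded. If neither jump lies in $\mathcal S$ then $a(0)=b(0)=0$, hence $d(0)=0$, so $0$ is a branch point, necessarily $x_1$ (because $x_2>0$); a local expansion of $Y_i(x)=\bigl(-b(x)\pm\sqrt{d(x)}\bigr)/(2a(x))$ as $x\to0^{+}$ — numerator and denominator vanishing to first order, but $\sqrt{d}$ only to order $\tfrac12$ — yields $\operatorname{Im}Y_i(x)\to\pm\infty$, so both branches escape to infinity at $x_1=0$ and $\mathcal L$ is unbounded, while on $(x_1,x_2]\subset(0,1)$ we have $a>0$, so $x_1$ is the only infinite point. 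In the bounded case $Y(x_1)$ is finite, and the same $t\to0^{+}$ deformation (with $b(x_1)\geq 0$ and $a(x_1)>0$ for small $t$, and $Y(x_1)=0\iff c(x_1)=0$ to rule out sign changes) gives $Y(x_1)\leq 0$.

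\emph{Consequences.} From the above, $\overline{\mathcal G_{\mathcal L}}\cap\mathbb R$ is $[Y(x_1),Y(x_2)]$ with $Y(x_1)\leq 0<Y(x_2)$ in the bounded case and $(-\infty,Y(x_2)]$ in the unbounded case (the ray avoiding $+\infty$). In both cases $0\in[Y(x_1),Y(x_2))\subseteq\overline{\mathcal G_{\mathcal L}}$, so $0\in\mathcal G_{\mathcal L}\cup\mathcal L$, and $0\in\mathcal L$ exactly when $Y(x_1)=0$. For the branch points $y_i$, one uses that $\{K=0\}\cap\mathbb R^{2}$, completed at infinity, is a union of two ovals — a bounded one with $x\in[x_2,x_3]$ and $y\in[y_2,y_3]$, and one through $\infty$ with $x$ in $(-\infty,x_1]\cup[x_4,\infty)$ and $y$ in the complementary arc from $y_4$ to $y_1$ — the matching of the bounded ranges being forced by Lemma~\ref{lem:properties_branch_points} (applied to $d$ and to $\widetilde d$). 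Evaluating the $y$-coordinate at the vertical-tangent abscissas $x_2$ and $x_1$ gives $Y(x_2)\in(y_2,y_3)$ and, in the bounded case, $Y(x_1)$ strictly between $y_4$ and $y_1$ on the oval through $\infty$; together with $y_1<y_2$, $\vert y_1\vert,\vert y_2\vert<1<\vert y_3\vert,\vert y_4\vert$, and $y_3<y_4$ when $y_4>0$, this places $y_1,y_2$ in $\mathcal G_{\mathcal L}\cap\mathbb R$ and $y_3,y_4$ outside $\overline{\mathcal G_{\mathcal L}}$ (in the unbounded case one also uses that $(0,1)\in\mathcal S$, whence $y_4>0$ and $y_4>y_3>Y(x_2)$).

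\emph{Main obstacle.} The delicate points are the singular local analysis of $Y_0,Y_1$ at $x_1=0$ in the unbounded case (the $0/0$ cancellation, and checking that \emph{both} branches become infinite so that $\mathcal L$ genuinely passes through $\infty$), together with the clean description of $\overline{\mathcal G_{\mathcal L}}\cap\mathbb R$, which relies on $\mathcal L$ being a Jordan curve on the sphere and on careful orientation bookkeeping along the oval through $\infty$ — needed for the interlacing $Y(x_2)\in(y_2,y_3)$ and the location of $Y(x_1)$, in particular when some $y_i$ is negative.
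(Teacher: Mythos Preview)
The paper does not prove this lemma at all: it is simply quoted from \cite{BeBMRa-17} (as the label ``Lem.~18 in \cite{BeBMRa-17}'' indicates), so there is no ``paper's own proof'' to compare against. Your write-up is therefore an independent proof sketch, and on the whole it has the right architecture: conjugacy of $Y_0,Y_1$ on $(x_1,x_2)$ from $d<0$; the case analysis on $(-1,1),(-1,0)\in\mathcal S$ governing whether $a$ vanishes on $[x_1,x_2]$ and hence whether $\mathcal L$ is bounded; and the interlacing of $Y(x_1),Y(x_2)$ with the $y_i$ via the real oval structure of $\{K=0\}$.

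Two places deserve tightening. First, your argument for $Y(x_2)>0$ via a $t\to 0^{+}$ deformation is shaky: you assert $a(x_2)\to 0^{+}$ and $b(x_2)/x_2\to -1$, but without controlling the relative rates of $x_2(t)$ and $t$ you cannot conclude that $-b(x_2)/(2a(x_2))\to +\infty$ rather than to some finite limit. A cleaner route is to stay at fixed $t$ and use that on $(x_2,x_3)$ the branches $Y_0,Y_1$ are real with $Y_0Y_1=c/a>0$ (so same sign) and that $b(1)=t\cdot\#\{(i,0)\in\mathcal S\}-1<0$, giving $Y_0(1)+Y_1(1)=-b(1)/a(1)>0$; hence $Y_0>0$ on $[x_2,x_3]$ and in particular $Y(x_2)>0$. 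The analogous claim $Y(x_1)\leq 0$ via deformation is equally loose; it is more robust to read it off the ``oval through $\infty$'' once you have set that up carefully. Second, your final paragraph leans on the two-oval description of $\{K=0\}\cap\mathbb R^{2}$ and on $\mathcal L$ being a Jordan curve on the sphere as if these were given; in this paper they are not proved but imported from \cite{FaIaMa-17}, so you should cite that explicitly rather than treat it as self-evident.
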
 

\begin{figure}[htb]
\centering
\begin{tabular}{cccc}
\begin{tikzpicture}
\begin{scope}[scale=0.2]
\draw [dblue!90, thick] plot[smooth] file{./figures/Y0.dat};
\draw [dblue!90, thick] plot[smooth] file{./figures/Y1.dat};
\draw [->] (0,-10) -- (0,10);
\draw [->] (-12,0) -- (13,0);
\draw[thick, dblue!90, fill=dblue!90] (0,0) circle (0.15 cm);
\draw [dblue!90] (-0.9,-1.2) node{\tiny{$y_1$}};
\draw[thick, dblue!90, fill=dblue!90] (0.5,0) circle (0.15 cm);
\draw [dblue!90] (0.9,-1.2) node{\tiny{$y_2$}};
\draw[thick, dblue!90, fill=dblue!90] (12,0) circle (0.15 cm);
\draw [dblue!90] (12.2,-1.2) node{\tiny{$y_3$}};
\end{scope}
\end{tikzpicture}
\qquad
&
\qquad
\begin{tikzpicture}
\begin{scope}[scale=0.2]
\draw [dblue!90, thick] plot[smooth, xshift= -290.5cm, xscale=100] file{./figures/Y1infini2.dat};
\draw [dblue!90,thick] plot[smooth, xshift= -290.5cm, xscale=100 ] file{./figures/Y0infini2.dat};
\draw [->] (0,-10) -- (0,10);
\draw [->] (-8,0) -- (17,0);
\draw[thick, dblue!90, fill=dblue!90] (0.1,0) circle (0.15 cm);
\draw [dblue!90] (-0.6,-0.8) node{\tiny{$y_1$}};
\draw[thick, dblue!90, fill=dblue!90] (0.8,0) circle (0.15 cm);
\draw [dblue!90] (1.1,-0.8) node{\tiny{$y_2$}};
\draw[thick, dblue!90, fill=dblue!90] (7,0) circle (0.15 cm);
\draw [dblue!90] (7,-0.8) node{\tiny{$y_3$}};
\draw[thick, dblue!90, fill=dblue!90] (9,0) circle (0.15 cm);
\draw [dblue!90] (9,-0.8) node{\tiny{$y_4$}};		
\end{scope}
\end{tikzpicture}
  \end{tabular}
   \caption{The curve $\mathcal{L}$ for Gessel's model (on the left) and for the model with jumps $\{{\sf E},{\sf N},{\sf SW},{\sf S}\}$ (on the right), for $t=1/8$}
\label{fig:curves}
\end{figure}
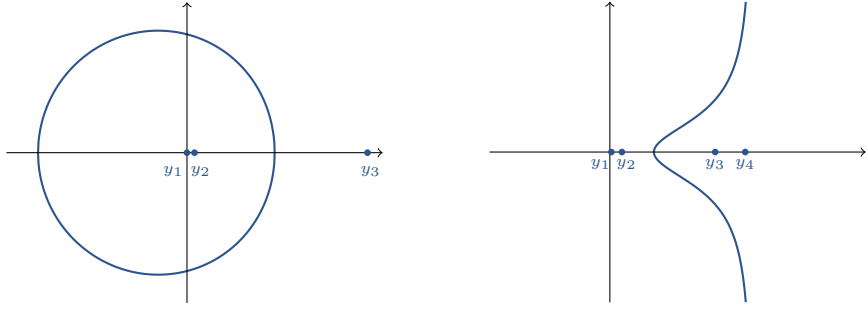

The step sets with jumps $\{{\sf E},{\sf N},{\sf SW}\}$, $\{{\sf E},{\sf NE},{\sf N},{\sf S}\}$ and $\{{\sf E},{\sf N},{\sf SW},{\sf S}\}$ have an unbounded contour, whereas the other models in Table \ref{tab:transform_models} have a bounded contour. 

We close this section by introducing a particular conformal mapping for $\mathcal{G}_\mathcal L$, which will happen to be very useful for our study.

\begin{defn}[Conformal gluing function]
\label{defn:CGF}
A function $w$ is said to be a conformal gluing function for the set $\mathcal{G}_\mathcal L$ if:
\begin{itemize}
     \item $w$ is meromorphic in $\mathcal{G}_\mathcal L$ and admits finite limits on $\mathcal L$;
     \item $w$ is one-to-one on $\mathcal{G}_\mathcal L$;
     \item for all $y$ on $\mathcal L$, $w(y)=w(\overline{y})$.
\end{itemize}
\end{defn}

For example, $w(y)=\frac{1}{2}(y+\frac{1}{y})$ is a conformal gluing function for the unit disc. See Appendix~\ref{app:expression_Gluing} for a list of conformal gluing functions associated to the domains we are considering.

\section{Expression for the generating functions}
\label{sec:expression_GF}

\subsection{Main results and discussion}

The first and crucial point is to prove that the diagonal $D(y)$ in \eqref{eq:def_diagonal} satisfies a boundary value problem (BVP), in the sense of the lemma below, the proof of which is postponed to Section \ref{subsec:proof_Lemma_BVP}. Let $\mathcal D$ denote the open unit disc and let $\widetilde d$ be the discriminant  \eqref{eq:discriminants}.
\begin{lem}
\label{lem:BVP}
The function $D(y)$ can be analytically continued from the unit disc to the domain $\mathcal D\cup\mathcal{G}_\mathcal L$ and admits finite limits on $\mathcal{L}$. Moreover, $D(y)$ satisfies the following boundary condition, for $y\in\mathcal{L}$:
\begin{equation}
\label{eq:bvp_RCshift_3/4}
     \sqrt{\widetilde{d}(y)}D(y)-\sqrt{\widetilde{d}(\overline{y})}D(\overline{y})=y-\overline{y}.
\end{equation}
\end{lem}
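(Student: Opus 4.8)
The plan is to derive the boundary condition \eqref{eq:bvp_RCshift_3/4} from the functional equation \eqref{eq:functional_equation_octant}, exactly as in the quarter-plane analysis of \cite{FaIaMa-17,Ra-12}, and then separately to establish the analytic-continuation statement. First I would restrict the functional equation \eqref{eq:functional_equation_octant} to pairs $(x,y)$ lying on the kernel curve, i.e.\ with $K(x,y)=0$. For $y\in\mathcal L$, by the definition \eqref{eq:curve_L} there is $x\in[x_1,x_2]$ with $K(x,y)=0$, namely $x=X(y)$; moreover the two branches $Y_0,Y_1$ over $[x_1,x_2]$ are complex conjugate, so that the two roots of $K(\cdot,y)=0$ in $x$ (equivalently, the two $y$-values $y$ and $\overline y$ sitting over the same $x\in[x_1,x_2]$) are exchanged by complex conjugation. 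Plugging $y\in\mathcal L$ and then $\overline y\in\mathcal L$ into the kernel equation, the left-hand side $K(X(y),y)L(\dots)$ vanishes in both cases, and so does the $D(y)$-term on the right (since its prefactor contains $x\widetilde a(y)y^2+\tfrac{1}{2}\widetilde b(y)y\cdot\,$, wait — more carefully, one uses that on $K=0$ one may re-express things so the $D$ contribution survives). Let me restate: the key algebraic identity is that for $K(X(y),y)=0$ one has $X(y)\widetilde a(y)X(y)+\widetilde b(y)=-\widetilde c(y)/X(y)$, and a short computation shows the right-hand side of \eqref{eq:functional_equation_octant}, evaluated along the curve, reduces to an expression involving only $D(y)$, the known polynomials in $y$, and the term $-\tfrac12 X(y)y$. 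Writing this relation at $y$ and at $\overline y$ and subtracting, the unknown one-variable series $L_{-0}$ and $D(0)$ cancel because they take the same value (they depend only on the product $X(y)$, which is real on $[x_1,x_2]$), and one is left with a relation of the form $R(y)D(y)-R(\overline y)D(\overline y)=S(y)-S(\overline y)$ for explicit $R,S$. Identifying $R(y)$ with $\sqrt{\widetilde d(y)}$ up to the sign convention fixed after \eqref{eq:algebraic_expressions_Y_X}, and $S(y)$ with $y$, yields \eqref{eq:bvp_RCshift_3/4}. The bookkeeping of which prefactors survive and the sign of the square root is the part requiring care; I expect this to be the main obstacle, since \eqref{eq:functional_equation_octant} carries the extra $\tfrac12$-factors and the $D(0)$ term that are absent in the classical quadrant setting.

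For the analytic-continuation claim, I would argue in two stages. The series $D(y)=\sum_{i,n\geq0}c_{i,i}(n)y^it^n$ converges for $|y|<1$ (with $t$ small fixed), giving analyticity on $\mathcal D$. To extend it to $\mathcal D\cup\mathcal G_\mathcal L$, I would use the functional equation \eqref{eq:functional_equation_octant} solved for $D(y)$: on the region where the prefactor $x(x\widetilde a(y)+\widetilde b(y)/2)$ is nonzero, $D(y)$ equals an explicit combination of $K(x,y)L(x,y)$, $c(x)L_{-0}(x)$, $txD(0)$ and $xy$, evaluated at $x=X_0(y)$ (the branch of $X$ that is small, hence for which $L$ and $L_{-0}$ are given by convergent series). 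The standard fact (cf.\ Lemma \ref{lem:properties_branch_points} and Lemma \ref{lem:properties_curves}) is that $X_0$ maps $\mathcal G_\mathcal L$ into the unit disc, or at least into a region where $L(X_0(y),y)$ and $L_{-0}(X_0(y))$ remain analytic; this provides the continuation of $D$ across $\partial\mathcal D$ into $\mathcal G_\mathcal L$, and the fact that $X_0$ and these series have finite limits on $\mathcal L$ (Lemma \ref{lem:properties_curves}) gives the finite boundary limits of $D$ on $\mathcal L$. Care is needed at the possible zeros of the prefactor and at the point $y=0$ in case $\mathcal L$ is unbounded, where $0\in\mathcal L$; there one invokes Lemma \ref{lem:properties_curves} to control the behaviour of $X_0$ and of $\widetilde a,\widetilde b$.

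Finally I would assemble the two parts: having continued $D$ to $\mathcal D\cup\mathcal G_\mathcal L$ with finite boundary values, the boundary relation derived in the first stage — which a priori was obtained as an identity between convergent series evaluated on $\mathcal L$ — holds by continuity for all $y\in\mathcal L$, which is exactly \eqref{eq:bvp_RCshift_3/4}. I expect the genuinely delicate points to be (i) tracking the $\tfrac12$ and $D(0)$ terms through the substitution $x=X(y)$ and checking they drop out of the difference at $y$ and $\overline y$, and (ii) justifying that $X_0(\mathcal G_\mathcal L)$ stays inside the domain of convergence of the $L$-series, i.e.\ the lifting argument for the analytic continuation; both are routine in the quarter-plane literature but must be re-verified for the $\tfrac{3\pi}{4}$-cone kernel $K(x,y)=xy\bigl(t\sum_{(i,j)\in\mathcal S}x^{i-j}y^{i}-1\bigr)$.
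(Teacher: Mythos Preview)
Your proposal is correct and follows essentially the same route as the paper: evaluate the functional equation \eqref{eq:functional_equation_octant} on the kernel curve (the paper parametrizes by $x$ approaching $[x_1,x_2]$ from above and below, you by $y$ and $\overline y$ on $\mathcal L$, which is the same thing), subtract so that the $c(x)L_{-0}(x)$ and $\tfrac12txD(0)$ terms cancel, and then use the substitution $x=X_0(y)$ in the functional equation to analytically continue $D$ beyond the unit disc. The one identity you leave implicit, and which resolves your ``wait'' moment, is that on the curve $2\widetilde a(y)X_0(y)+\widetilde b(y)=-\sqrt{\widetilde d(y)}$, so the prefactor $x\bigl(x\widetilde a(y)+\tfrac12\widetilde b(y)\bigr)$ equals $-\tfrac{x}{2}\sqrt{\widetilde d(y)}$ and survives with exactly the right form; the paper also uses the relation $X_0(Y_0(x))=x$ to identify the zeros of this prefactor with those of $\widetilde d$, which is how the finite-limits claim is completed.
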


In the remainder of the paper, we solve Lemma \ref{lem:BVP} in two different ways, leading to the contour-integral expressions of $D(y)$ given in Theorem~\ref{thm:first_main} and Theorem \ref{thm:second_main} below. Let us first remark that contrary to the usual quadrant case \cite{Ra-12}, the prefactor $\sqrt{\widetilde{d}(y)}$ in front of the unknown $D(y)$ is not meromorphic in $\mathcal{G}_\mathcal L$, simply because it is the square root of a polynomial, two roots of which being located in $\mathcal{G}_\mathcal L$ (see Section \ref{subsec:Roots_of_the_kernel}). This innocent-looking difference has strong consequences on the resolution:
\begin{itemize}
     \item Due to the presence of a non-meromorphic prefactor in \eqref{eq:bvp_RCshift_3/4}, solving the BVP of Lemma \ref{lem:BVP} requires the computation of an index (in the sense of Section \ref{sub:proof-thmbvpM1} and Appendix \ref{app:RHBVP}). This index is an integer and will be non-zero in our case, which will increase the complexity of the solutions. In Theorem~\ref{thm:first_main} we solve the BVP, by taking into account this non-zero index.
     \item A second, alternative idea is to reduce to the case of a meromorphic boundary condition, and thereby to an index equal to $0$. To do so, we will find an analytic function $f$ with the property that
\begin{equation}
\label{eq:decoupling_d_f}
     \frac{\sqrt{\widetilde{d}(\overline{y})}}{\sqrt{\widetilde{d}(y)}}=\frac{f(\overline{y})}{f(y)}
\end{equation}
for $y\in\mathcal L$, see Section \ref{sub:Anti-Tutte's} for more details. Such a function $f$ allows us to rewrite \eqref{eq:bvp_RCshift_3/4} as
\begin{equation}
\label{eq:other_way_BVP}
     f(y)D(y)-f(\overline{y})D(\overline{y})=\frac{f(y)}{\sqrt{\widetilde{d}(y)}}(y-\overline{y}),
\end{equation}
which by construction admits a meromorphic prefactor $f(y)$. In Theorem \ref{thm:second_main} we solve this zero-index BVP by this technique.
\end{itemize}
Although they represent the same function $D(y)$ (and so should be equal!), it will be apparent that the expressions obtained in Theorems \ref{thm:first_main} and \ref{thm:second_main} are quite different, and that the second one is simpler. However, we decided to present the two resolutions, as we think that they offer different insights on this boundary value method, and also because it is not obvious at all to be able to solve an equation of the form \eqref{eq:decoupling_d_f} and thereby to reduce to the zero-index case. Recall (Section \ref{subsec:Roots_of_the_kernel}) that $\mathcal{L}_0$ is the upper half of the curve $\mathcal{L}$.
\begin{thm}
\label{thm:first_main}
Let $w$ be a conformal gluing function with a pole at $y_2$. For any step set $\widehat{\mathcal{S}}$  satisfying to \ref{main_hyp}, the diagonal section \eqref{eq:def_diagonal} can be written, for $y\in \mathcal{G}_{\mathcal L}$,
\begin{equation*}
     D(y)=\frac{\Psi(w(y))}{2i\pi}\int_{\mathcal{L}_0}\frac{z-\overline{z}}{\sqrt{\widetilde{d}(z)}}\frac{w'(z)}{\Psi^+(w(z))(w(z)-w(y))}\dz,
\end{equation*} 
with
\begin{equation*}
\left\{
\begin{array}{r c l}
\Psi(y) &=& (y-Y(x_1))\exp{\Gamma(y)},\smallskip\\
\Psi^+(y) &=& (y-Y(x_1))\exp{\Gamma^+(y)},\smallskip\\
\Gamma(w(y))&=&	\displaystyle\frac{1}{2i\pi}\int_{\mathcal{L}_0}\log\left(\frac{\sqrt{\widetilde{d}(\overline{z})}}{\sqrt{\widetilde{d}(z)}}\right)\frac{w'(z)}{w(z)-w(y)}\dz.
\end{array}
\right.
\end{equation*}
All quantities are computed relative to the step set $\mathcal{S}=\varphi(\widehat{\mathcal{S}})$ after the change of coordinates \eqref{eq:change_var}.
\end{thm}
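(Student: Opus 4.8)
The plan is to solve the boundary value problem of Lemma~\ref{lem:BVP} directly, taking into account the non-zero index created by the non-meromorphic prefactor $\sqrt{\widetilde d(y)}$. I would proceed in four steps.

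\emph{Step 1: reformulate the BVP as a homogeneous Riemann--Hilbert problem via the conformal gluing function.} Using the conformal gluing function $w$ with a pole at $y_2$, the curve $\mathcal L$ is mapped to a segment of the real line (or its complement), and the gluing condition $w(y)=w(\overline y)$ identifies $y$ with $\overline y$. Composing $D$ with $w^{-1}$ transforms \eqref{eq:bvp_RCshift_3/4} into a scalar boundary condition of the form $\Phi^+(s)=G(s)\Phi^-(s)+g(s)$ on the image contour, where the "jump" $G(s)$ is exactly the ratio $\sqrt{\widetilde d(\overline z)}/\sqrt{\widetilde d(z)}$ read along $\mathcal L$ and pushed through $w$. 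The key point here, to be extracted from Lemma~\ref{lem:properties_curves} and the structure of $\widetilde d$, is that two of the branch points $y_1,y_2$ lie inside $\mathcal G_{\mathcal L}$: this is precisely what makes $\sqrt{\widetilde d}$ fail to be meromorphic inside and forces a non-trivial winding of $G$ along $\mathcal L$.

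\emph{Step 2: compute the index and build the homogeneous solution.} I would compute the index $\chi = \frac{1}{2\pi}[\arg G]_{\mathcal L}$ of the problem; by the location of the branch points and the analysis in Appendix~\ref{app:RHBVP}, this index is a fixed nonzero integer (I expect $\chi$ related to the number of interior branch points). To accommodate a nonzero index, one introduces the rational correcting factor $(y - Y(x_1))$ — note $Y(x_1)$ is the second real intersection point of $\mathcal L$ when it is bounded, and at $x_1=0$ in the unbounded case this is consistent with Lemma~\ref{lem:properties_curves}. The homogeneous solution $\Psi$ is then constructed by the classical Plemelj/Cauchy-integral formula: $\Psi(y) = (y-Y(x_1))\exp\Gamma(y)$ with $\Gamma(w(y))$ given by the Cauchy transform over $\mathcal L_0$ of $\log\bigl(\sqrt{\widetilde d(\overline z)}/\sqrt{\widetilde d(z)}\bigr)$ against the kernel $w'(z)/(w(z)-w(y))$; the restriction to the upper half $\mathcal L_0$ uses the symmetry of $\mathcal L$ in the real axis (so that the integral over all of $\mathcal L$ collapses to twice a piece over $\mathcal L_0$ after using $w(z)=w(\overline z)$). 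I would verify that $\Psi$ satisfies $\Psi^+(y)/\Psi^-(y)$ matches the prescribed jump on $\mathcal L$, with $\Psi^+$ the boundary value from $\mathcal G_{\mathcal L}$ (hence the notation with $\Gamma^+$).

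\emph{Step 3: divide out and solve the resulting zero-index, additive problem.} Dividing \eqref{eq:bvp_RCshift_3/4} by $\Psi$ turns the multiplicative boundary condition into a purely additive one for $D(y)/\Psi(y)$, with right-hand side $\frac{z-\overline z}{\sqrt{\widetilde d(z)}\,\Psi^+(w(z))}$ on $\mathcal L$. Applying the Sokhotski--Plemelj representation once more — again pulling everything back through $w$ so that the jump relation becomes the standard additive Riemann problem on the image contour, and again folding $\mathcal L$ onto $\mathcal L_0$ by symmetry — yields
\[
     \frac{D(y)}{\Psi(w(y))}=\frac{1}{2i\pi}\int_{\mathcal L_0}\frac{z-\overline z}{\sqrt{\widetilde d(z)}}\,\frac{w'(z)}{\Psi^+(w(z))\,(w(z)-w(y))}\,\mathrm{d}z,
\]
which is the claimed formula after multiplying back by $\Psi(w(y))$. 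One must check that no additional entire/polynomial term is needed: this follows from the growth of $D$ (it is a power series with nonnegative coefficients, bounded on the relevant domain by Lemma~\ref{lem:BVP}, which gives analyticity on $\mathcal D\cup\mathcal G_{\mathcal L}$ and finite limits on $\mathcal L$) together with the behaviour of $\Psi$ at the pole $y_2$ of $w$, pinning down the solution uniquely up to possibly a constant, and that constant is then fixed by evaluating at a convenient point (e.g. $y=0$, or by matching the known value $D(0)=C_{0,0}$).

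\emph{Main obstacle.} I expect the principal difficulty to be Step~2: correctly pinning down the index and showing that the single rational factor $(y-Y(x_1))$ is exactly the right correction — i.e. that after this adjustment the modified jump has index zero and the logarithm in $\Gamma$ is well-defined (single-valued) along $\mathcal L_0$. This requires a careful sign/branch analysis of $\widetilde d$ on the arcs of $\mathcal L$, using Lemma~\ref{lem:properties_branch_points} (reality and location of the branch points) and Lemma~\ref{lem:properties_curves} (the intersection points of $\mathcal L$ with the real axis and the position of $y_1,y_2$ inside $\mathcal G_{\mathcal L}$), plus the convention $\sqrt{\widetilde d}>0$ chosen in analogy with \eqref{eq:algebraic_expressions_Y_X}. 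The remaining subtlety is bookkeeping the unbounded-contour case, where $w$ has its pole at $y_2$ and one must control the behaviour of all Cauchy integrals at infinity along $\mathcal L$; this is handled exactly as the analogous quadrant computation in \cite{Ra-12}, which is why I would lean on Appendix~\ref{app:RHBVP} and the cited Riemann--Hilbert machinery rather than reproving it.
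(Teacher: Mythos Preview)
Your proposal is correct and follows essentially the same route as the paper: transform the Riemann--Carleman condition of Lemma~\ref{lem:BVP} into a Riemann--Hilbert problem on the open contour $\mathcal U=w(\mathcal L)$ via the conformal gluing function, compute the index of $G=\sqrt{\widetilde d(v^-)}/\sqrt{\widetilde d(v^+)}$, and then apply the general solution formula of Appendix~\ref{app:RHBVP}. Two small sharpenings: the index is exactly $-1$ (the paper gets this by the argument principle, since $\widetilde d$ has precisely the two zeros $y_1,y_2$ inside $\mathcal G_{\mathcal L}$, so $\Ind_{\mathcal L}\widetilde d=2$ and hence $\Ind_{\mathcal U}G=-\Ind_{\mathcal L}\sqrt{\widetilde d}=-1$), and for $\chi=-1$ the general theory (Theorem~\ref{thm:Sol-BVP}) gives a unique solution with \emph{no} additive polynomial term, so your worry about fixing a leftover constant in Step~3 does not actually arise.
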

The left limit $\Gamma^+$ (and thereby $\Psi^+$) appearing in Theorem \ref{thm:first_main} can be computed with the help of Sokhotski-Plemelj formulas, that we have recalled in Proposition \ref{proposition:Sokhotski-Plemelj} of Appendix \ref{app:RHBVP}. We now turn to our second main result.
\begin{thm}
\label{thm:second_main}
Let $w$ be a conformal gluing function with a pole at $y_2$, with residue $r$. For any step set $\widehat{\mathcal{S}}$  satisfying to \ref{main_hyp}, the diagonal section \eqref{eq:def_diagonal} can be written, for $y\in \mathcal{G}_{\mathcal L}$,
\begin{equation*}
     D(y)= \frac{-w'(y)\sqrt{r}}{\sqrt{\widetilde d'(y_2)(w(y)-w(Y(x_1)))(w(y)-w(Y(x_2)))}}\frac{1}{2i\pi}\int_{\mathcal{L}}\frac{ zw'(z)}{\sqrt{w(z)-w(y_1)}(w(z)-w(y))}\dz.
\end{equation*}
All quantities are computed according to the step set $\mathcal{S}=\varphi(\widehat{\mathcal{S}})$.
\end{thm}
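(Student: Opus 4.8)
The plan is to solve the boundary value problem of Lemma~\ref{lem:BVP} along the index-reduction route sketched in \eqref{eq:decoupling_d_f}--\eqref{eq:other_way_BVP}, and then to rewrite the answer by means of an algebraic identity for $\widetilde{d}(y)\,w'(y)^2$. The first task is to construct the decoupling (anti-Tutte) function $f$: one needs $f$ meromorphic in $\mathcal{G}_{\mathcal{L}}$, with finite nonzero boundary values on $\mathcal{L}$ away from finitely many points, such that $\sqrt{\widetilde{d}(\overline{y})}/\sqrt{\widetilde{d}(y)}=f(\overline{y})/f(y)$ on $\mathcal{L}$. Squaring this relation shows that $f(y)^2/\widetilde{d}(y)$ must be invariant under $y\mapsto\overline{y}$ along $\mathcal{L}$, hence---by the standard invariant lemma (Appendix~\ref{app:RHBVP})---a rational function of the conformal gluing function $w$. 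Single-valuedness of $f$ then dictates which one: by Lemma~\ref{lem:properties_curves}, $\sqrt{\widetilde{d}}$ has exactly two branch points $y_1,y_2$ in $\mathcal{G}_{\mathcal{L}}$, and the natural candidate neutralizing both is
\[
f(y)=\frac{\sqrt{\widetilde{d}(y)}}{\sqrt{w(y)-w(y_1)}}
\]
(up to a constant): the branch point at $y_1$ is cancelled by $\sqrt{w(y)-w(y_1)}$ since $w'(y_1)\neq 0$, while the branch point at $y_2$ is cancelled by the \emph{simple pole} of $w$ at $y_2$, so that near $y_2$ one has $f(y)^2\sim(\widetilde{d}'(y_2)/r)(y-y_2)^2$ and $f$ is meromorphic there with a simple zero. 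One then verifies directly that $f$ is meromorphic throughout $\mathcal{G}_{\mathcal{L}}$, with a single zero (at $y_2$), and satisfies \eqref{eq:decoupling_d_f} on $\mathcal{L}$.

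With this $f$ in hand, \eqref{eq:bvp_RCshift_3/4} becomes the index-zero problem \eqref{eq:other_way_BVP}, i.e. $f(y)D(y)-f(\overline{y})D(\overline{y})=(y-\overline{y})/\sqrt{w(y)-w(y_1)}$ for $y\in\mathcal{L}$, whose coefficient $f$ is meromorphic. Since $D$ is analytic on $\mathcal{D}\cup\mathcal{G}_{\mathcal{L}}$ by Lemma~\ref{lem:BVP}, the product $fD$ is analytic in $\mathcal{G}_{\mathcal{L}}$ and vanishes at $y_2$. Conjugating by $w$---which glues $y$ to $\overline{y}$ along $\mathcal{L}$ and maps $\mathcal{G}_{\mathcal{L}}$ onto the complement of the real segment $w(\mathcal{L})=[w(Y(x_1)),w(Y(x_2))]$---turns this into a scalar Riemann--Hilbert jump problem of index $0$ on that slit, solved by the Cauchy transform of the jump (Proposition~\ref{proposition:Sokhotski-Plemelj}); the homogeneous solution, being holomorphic across $\mathcal{L}$ and bounded, is constant and vanishes by the decay of $fD$ at $y_2$, so there is no free constant. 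Pulling back to the $y$-plane yields first a half-contour integral over $\mathcal{L}_0$ with numerator $z-\overline{z}$; using that $w$ is real on $\mathcal{L}$ and $w(y_1)\in\mathbb{R}$, together with the orientation reversal $z\mapsto\overline{z}$, this folds into the full-contour integral over $\mathcal{L}$ with numerator $z$ appearing in the statement, giving $f(y)D(y)=\frac{1}{2i\pi}\int_{\mathcal{L}}\frac{z\,w'(z)}{\sqrt{w(z)-w(y_1)}\,(w(z)-w(y))}\,\mathrm{d}z$.

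To reach the stated form it remains to simplify $1/f(y)=\sqrt{w(y)-w(y_1)}/\sqrt{\widetilde{d}(y)}$. For this I would establish the identity $\widetilde{d}(y)\,w'(y)^2=\frac{\widetilde{d}'(y_2)}{r}\,(w(y)-w(y_1))(w(y)-w(Y(x_1)))(w(y)-w(Y(x_2)))$ by a zero/pole count in $\mathcal{G}_{\mathcal{L}}$: both sides have a simple zero at $y_1$ and a pole of order $3$ at $y_2$, and the boundary zeros of the right-hand side at the endpoints $Y(x_1),Y(x_2)$ of $\mathcal{L}$ (where $\mathcal{L}$ has a vertical tangent, hence $w$ is locally $2$-to-$1$ and $w'$ has a simple zero) are matched by $w'(y)^2$; the ratio of the two sides is thus holomorphic, nonvanishing and bounded on $\mathcal{G}_{\mathcal{L}}$, hence constant, the constant $\widetilde{d}'(y_2)/r$ being read off from the expansions at the simple pole $y_2$. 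Substituting this into $1/f$ cancels the factor $\sqrt{w(y)-w(y_1)}$ and produces exactly the prefactor $-w'(y)\sqrt{r}/\sqrt{\widetilde{d}'(y_2)(w(y)-w(Y(x_1)))(w(y)-w(Y(x_2)))}$; the global sign (and, throughout, the choice of branches of the square roots) is fixed by requiring that $D(y)=\sum c_{i,i}(n)y^it^n$ have nonnegative coefficients, equivalently by matching the lowest-order term in $t$. Assembling these three steps gives the formula.

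The main obstacle is the construction and verification in the first step: proving that the explicit $f$ above is genuinely single-valued and meromorphic on all of $\mathcal{G}_{\mathcal{L}}$---which hinges on the delicate interplay between the two interior branch points $y_1,y_2$ of $\sqrt{\widetilde{d}}$, the zero of $w-w(y_1)$ at $y_1$, and the pole of $w$ at $y_2$---and that it satisfies \eqref{eq:decoupling_d_f} on the whole of $\mathcal{L}$, uniformly in the bounded and the unbounded case (in the latter $Y(x_1)=\infty$) and near the endpoints $Y(x_1),Y(x_2)$. This is precisely the content of the anti-Tutte invariant. Once $f$ is secured, the remaining steps are a careful but essentially routine adaptation of the index-zero boundary value technique used in the quadrant case.
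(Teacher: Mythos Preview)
Your proposal is correct and follows the same overall strategy as the paper: reduce to an index-zero BVP via a decoupling function $f$, solve by the Cauchy transform, and simplify through the algebraic identity relating $\widetilde d(y)w'(y)^2$ to a cubic in $w(y)$.

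The one genuine difference is how $f$ is produced. You write $f(y)=\sqrt{\widetilde d(y)}/\sqrt{w(y)-w(y_1)}$ directly and verify meromorphicity by cancelling the branch points at $y_1$ (against the zero of $w-w(y_1)$) and at $y_2$ (against the pole of $w$); the decoupling condition \eqref{eq:decoupling_d_f} is then immediate from $w(y)=w(\overline y)$ on $\mathcal L$. The paper instead constructs $f=g/g'$ from a conformal map $g:\mathcal G_{\mathcal L}\to\mathcal D$ with $g(\overline y)=\overline{g(y)}$, for which meromorphicity is automatic but the decoupling requires the anti-Tutte argument ($g(Y_0)g(Y_1)=1$ on $[x_1,x_2]$, differentiated, combined with $\sqrt{\widetilde d(Y_0)}/\sqrt{\widetilde d(Y_1)}=-Y_0'/Y_1'$). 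The two $f$'s coincide up to a multiplicative constant, precisely because of the differential equation $\widetilde d(y)w'(y)^2=\tfrac{\widetilde d'(y_2)}{r}(w(y)-w(y_1))(w(y)-w(Y(x_1)))(w(y)-w(Y(x_2)))$, which you derive by a zero/pole count and the paper quotes from \cite{FaIaMa-17}. Your route is shorter and more computational; the paper's is more conceptual and makes clear that the existence of $f$ is a general feature of conformal gluing rather than a lucky cancellation. Either way, the remaining steps (vanishing of the additive constant via $f(y_2)=0$, folding $\mathcal L_0$ into $\mathcal L$) are the same.
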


Here are some remarks about these results.

\smallskip

$\bullet$ First, it is important to notice that having an expression for $D(y)$ is sufficient for characterizing the complete generating function $C(x,y)$. Indeed, looking at Figure \ref{fig:some_sections} one is easily convinced that
\begin{equation*}
     C(x,y)=L(\varphi^{-1}(x,y))+D(\varphi^{-1}(x,y))+L(\varphi^{-1}(y,x)),
\end{equation*}
with
\begin{equation*}
\left\{
\begin{array}{r c l}
L(x,y) &=&\displaystyle \frac{1}{K(x,y)}\left(c(x)L_{-0}(x)-x\left(x\widetilde{a}(y)+\frac{1}{2}\widetilde{b}(y)\right)D(y)-\frac{1}{2}xy\right),\medskip\\
L_{-0}(x) &=&\displaystyle \frac{x}{c(x)}\left( \frac{1}{2}Y_0(x)+\left( x \widetilde{a}(Y_0(x))+\frac{1}{2}\widetilde{b}(Y_0(x))\right)D(Y_0(x)) \right),\medskip\\
\varphi^{-1}(x,y)&=&	(y^{-1},xy).
\end{array}
\right.
\end{equation*}

\smallskip

$\bullet$ Regarding the question of determining the algebraic nature of the diagonal series $D(y)$, the second expression is much simpler. Indeed, the integrand as well as the prefactor of the integral of Theorem \ref{thm:second_main} are algebraic functions of $y$, $z$, $t$ and $w$ (and its derivative) evaluated at various points. In addition, let us recall from \cite[Thm.~2]{Ra-12} that $w$ is algebraic if and only if the group is finite, and non-D-finite in the infinite group case. See Table \ref{tab:recap} for some implications. On the contrary, based on the exponential of a D-finite function, the integrand in Theorem \ref{thm:first_main} is a priori non-algebraic.

\begin{table}[h!]
\begin{center}
       \begin{tabular}{|l | l |l |l |}
\hline
Model &  Nature of $w$& Nature of $Q(x,y)$ & Nature of $C(x,y)$ \\
\hline \hline
$\smalldiagr{N,S,E,W}$ & rational \cite{Ra-12} & D-finite \cite{BMMi-10} & D-finite by \cite{BM-16} and Thm.\ \ref{thm:second_main} \\\hline
$\smalldiagr{S,W,NE}\ \smalldiagr{E,N,SW}\ \smalldiagr{E, NE, N, W, SW, S}$ & algebraic \cite{Ra-12}  & algebraic \cite{BMMi-10} & D-finite by Thm.\  \ref{thm:second_main}; algebraic?\\\hline
\multirow{2}{*}{$\smalldiagr{E, NE, N, SW}\ \smalldiagr{NE, W, SW, S}\ \smalldiagr{E, N, W, SW, S}\ \smalldiagr{E, NE, N, W, S}$} & \multirow{2}{*}{non-D-finite \cite{Ra-12}} & \multirow{2}{*}{non-D-finite \cite{KuRa-12,BoRaSa-14,DrHaRoSi-17} }& non-D-finite in $t$ \cite{Mu-19};\\
&&&
non-D-finite in $x,y$?\\\hline
\end{tabular}
\end{center}
\caption{Algebraic nature of the conformal mapping $w$, the quadrant generating function $Q(x,y)$ and the three-quarter plane counting function $C(x,y)$}
\label{tab:recap}
\end{table}

\smallskip

$\bullet$ Lemma \ref{lem:BVP} entails that the function $D(y)$ can be analytically continued to the domain $\mathcal D\cup\mathcal{G}_\mathcal L$. This is apparent on the first statement (using properties of contour integrals). This is a little bit less explicit on Theorem \ref{thm:second_main}, because of the prefactor.

\smallskip

$\bullet$ Theorem \ref{thm:first_main} (resp.\ Theorem \ref{thm:second_main}) will be proved in Section \ref{sub:proof-thmbvpM1} (resp.\ Sections \ref{sub:Anti-Tutte's} and \ref{subsec:proof_thm_3/4_index0}).

\subsection{Simplification and series expansion in the reverse Kreweras case}

In this part we apply Theorem \ref{thm:second_main} to reverse Kreweras walks in the three-quarter plane: we first make explicit all quantities appearing in the statement of Theorem \ref{thm:second_main}, then we explain how to deduce the series expansion
\begin{equation}
\label{eq:series_expansion_reverse_Kreweras}
     D(0)=1+4\,{t}^{3}+46\,{t}^{6}+706\,{t}^{9}+12472\,{t}^{12}+239632\,{t}^{15}+4869440\,{t}^{18}+102995616\,{t}^{21}+O \left( {t}^{24} \right),
\end{equation}
obtained here by direct enumeration. Let us recall that the coefficients in front of $t^{n}$ are the $c_{0,0}(n)$, which count the numbers of reverse Kreweras walks of length $n$, starting and ending at $(0,0)$ and confined to the three-quarter plane.

 This symmetric model has the step set $\widehat{\mathcal S}=\{ (1,0), (0,1), (-1,-1) \}$, see Figure \ref{fig:symmetric_models_finite}. The change of variable $\varphi$ defined in \eqref{eq:change_var} transforms it into Kreweras step set, see Figure \ref{fig:symmetric_models_finite} and Table \ref{tab:transform_models}, with $\mathcal S=\{ (1,1), (-1,0), (0,-1)\}$.

\paragraph{Computation of various quantities.}

 The kernel \eqref{eq:kernel} is
$
     K(x,y)=xy(t(xy+x^{-1}+y^{-1}) -1), 
$
and with the notations \eqref{eq:coeff_kernel} and \eqref{eq:discriminants}, we have
\begin{equation*}
     a(x)=tx^{2},\quad b(x)=t-x,\quad c(x)=tx, \quad d(x)=(t-x)^{2}-4t^{2}x^{3},
\end{equation*}
and by symmetry $\widetilde a=a$, $\widetilde b=b$, $\widetilde c=c$ and $\widetilde d=d$.
The branch points $x_{1}$ and $x_{2}$ are the roots of $d$ in the open unit disc, such that $x_{1}<x_{2}$. We easily obtain
\begin{equation*}
\left\{
\begin{array}{r c l}
x_{1}\ =\ y_1&\hspace{-1.5mm}=\hspace{-1.5mm}&\displaystyle t-2\,{t}^{5/2}+6\,{t}^{4}-21\,{t}^{11/2}+80\,{t}^{7}-{\frac {1287}{4}}{t}^{17/2}+O ( {t}^{10} ),\medskip\\
x_{2}\ =\ y_2&\hspace{-1.5mm}=\hspace{-1.5mm}&\displaystyle t+2\,{t}^{5/2}+6\,{t}^{4}+21\,{t}^{11/2}+80\,{t}^{7}+{\frac {1287}{4}}{t}^{17/2}+O ( {t}^{10} ).
\end{array}\right.
\end{equation*}
We further have
\begin{equation*}
     \widetilde{d}'(y_2) =  2\,{t}^{5/4}-\frac{3}{2}\,{t}^{{17/4}}-8\,{t}^{{23/4}}-{\frac {603}{16}{t}^{{29/4}}}-174\,{t}^{{35/4}}+O \left( {t}^{{41/4}} \right).
\end{equation*}
We finally need to compute $Y(x_{1})$ and $Y(x_{2})$. By \eqref{eq:algebraic_expressions_Y_X} these quantities may be simplified as
\begin{equation*}
Y(x_{1})=-\sqrt{\frac{c(x_{1})}{a(x_{1})}}=-\sqrt{\frac{1}{x_{1}}} \qquad \text{and}\qquad Y(x_{2})=\sqrt{\frac{c(x_{2})}{a(x_{2})}}=\sqrt{\frac{1}{x_{2}}}.
\end{equation*}

\paragraph{Expression of the conformal gluing function.}

As we shall prove in Lemma \ref{lem:list_conformal_gluing_functions}, the following is a suitable conformal mapping:
\begin{equation*}
     w(y) = \left(\frac{1}{y}-\frac{1}{W}\right)\sqrt{1-yW^2},
\end{equation*}
where $W$ is the unique power series solution to $W=t(2+W^3)$. As Theorem \ref{thm:second_main} is stated for a conformal gluing function with a pole at $y_{2}$ and not at $0$, we should consider instead $w_{y_{2}}=\frac{1}{w-w(y_{2})}$.
We will need the following expansions:
\begin{equation*}
\left\{
\begin{array}{r c l}
     W&\hspace{-1.5mm}=\hspace{-1.5mm}&\displaystyle 2\,t+8\,{t}^{4}+96\,{t}^{7}+1536\,{t}^{10}+O \left( {t}^{11} \right),\medskip\\
     w_{y_{2}}(y_1)&\hspace{-1.5mm}=\hspace{-1.5mm}&\displaystyle \,{\frac {1}{4}}t^{-1/2}-\frac{3}{8}\,{t}^{5/2}-{\frac {97}{32}}{t}^{11/2}-{\frac {2611}{64}}{t}^{17/2}+O \left( {t}^{23/2} \right),\medskip\\
     w(y_2)&\hspace{-1.5mm}=\hspace{-1.5mm}&\displaystyle  \frac{1}{2}t^{-1}-2\, {t}^{1/2}-{t}^{2}-3\,{t}^{7/2}-7\,{t}^{5}-{\frac {115}{4}}{t}^{13/2}-90\,{t}^{8}-{\frac {3247}{8}}{t}^{19/2}+O \left( {t}^{11} \right),\medskip\\
     w_{y_{2}}(Y(x_1))&\hspace{-1.5mm}=\hspace{-1.5mm}&\displaystyle -t-2\,{t}^{4}-18\,{t}^{7}+O \left( {t}^{10} \right),\medskip\\
     w_{y_{2}}(Y(x_2))&\hspace{-1.5mm}=\hspace{-1.5mm}&\displaystyle -t-4\,{t}^{5/2}-18\,{t}^{4}-86\,t^{11/2}-418\,{t}^{7}-{\frac {4131}{2}}{t}^{17/2}+O \left( {t}^{10} \right),\medskip\\
     w'(y_2)&\hspace{-1.5mm}=\hspace{-1.5mm}&\displaystyle {t}^{-1}-2\,{t}^{1/2}-5/2\,{t}^{2}-6\,{t}^{7/2}-{\frac {169}{8}}{t}^{5}-75\,{t}^{13/2}-{\frac {4957}{16}}{t}^{8}-1251\,{t}^{19/2}+O \left( {t}^{11} \right). 
\end{array}\right.
\end{equation*}

 \paragraph{Explicit expression of $D(y)$.}

We apply now Theorem \ref{thm:second_main} and obtain
\begin{multline*}
     D(y)= \frac{-w_{y_{2}}'(y)}{\sqrt{(w_{y_{2}}(y)-w_{y_{2}}(-1/\sqrt{x_1}))(w_{y_{2}}(y)-w_{y_{2}}(1/\sqrt{x_2}))\widetilde{d}'(y_2)w'(y_2)}}
 \\
     \frac{1}{2i\pi}\int_{\mathcal{L}}\frac{ zw_{y_{2}}'(z)}{\sqrt{w_{y_{2}}(z)-w_{y_{2}}(y_1)}(w_{y_{2}}(z)-w_{y_{2}}(y))}\dz,
\end{multline*}
where $\mathcal{L}$ is the contour defined in \eqref{eq:curve_L}, represented on Figure \ref{fig:curve_Kreweras}. Since $w_{y_{2}}(0)=0$ and $w_{y_{2}}'(0)=-1$ (remember that $w$ has a pole at $y=0$), evaluating at $y=0$ the expression above yields
\begin{equation}
\label{eq:expression_D(0)}
     D(0)= \frac{1}{\sqrt{w_{y_{2}}(-1/\sqrt{x_1})w_{y_{2}}(1/\sqrt{x_2})\widetilde{d}'(y_2)w'(y_2)}}
     \frac{1}{2i\pi}\int_{\mathcal L}\frac{ zw_{y_{2}}'(z)}{\sqrt{w_{y_{2}}(z)-w_{y_{2}}(y_1)}w_{y_{2}}(z)}\dz.
\end{equation}
The integrand in the right-hand side of the above equation is analytic on $\mathcal G_\mathcal L\setminus [y_1,y_2]$. Hence by Cauchy's integral theorem, the contour $\mathcal L$ may be replaced by the unit circle $\mathcal C(0,1)$.

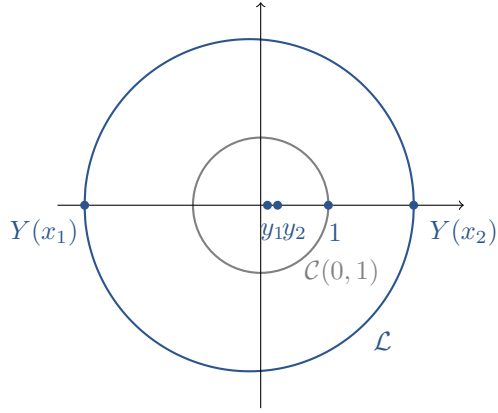
\begin{figure}[t]
  \centering
  \begin{tikzpicture}
\begin{scope}[scale=0.9]
\draw [dblue!90, thick] plot[smooth] file{./figures/KrewY0.dat};
\draw [dblue!90, thick] plot[smooth] file{./figures/KrewY1.dat};
\draw [gray, thick] (0,0) circle (1cm);
\draw [->] (-3,0) -- (3,0);
\draw [->] (0,-3) -- (0,3);
\draw[thick, dblue!90, fill=dblue!90] (1,0) circle (0.05 cm);
\draw [dblue!90] (1.1,-0.4) node{\small{$1$}};
\draw[thick, dblue!90, fill=dblue!90] (0.1,0) circle (0.05 cm);
\draw [dblue!90] (0.18,-0.4) node{\small{$y_{1}$}};
\draw[thick, dblue!90, fill=dblue!90] (0.25,0) circle (0.05 cm);
\draw [dblue!90] (0.5,-0.4) node{\small{$y_{2}$}};
\draw[thick, dblue!90, fill=dblue!90] (-2.6,0) circle (0.05 cm);
\draw [dblue!90] (-3.2,-0.4) node{\small{$Y(x_{1})$}};
\draw[thick, dblue!90, fill=dblue!90] (2.26,0) circle (0.05 cm);
\draw [dblue!90] (3,-0.4) node{\small{$Y(x_{2})$}};
\draw [gray] (1.2, -1) node{\small{$\mathcal{C}(0,1)$}};
\draw [dblue!90] (1.8, -2) node{\small{$\mathcal{L}$}};
\end{scope}
\end{tikzpicture}
   \caption{The curve $\mathcal{L}$ for Kreweras model, for $t=1/6$}
  \label{fig:curve_Kreweras}
\end{figure}

 \paragraph{Expression of $D(0)$ as a function of $W$.}
We could directly make a series expansion of $D(0)$ in $t$. However, for greater efficiency of the series expansion computation, we will first express $D(0)$ in terms of $W$, expand this integral in a series of $W$ and finally get back to a series in $t$. 
The generating function of excursions $D(0)$ can be written as 
\begin{equation}
\label{eq:D0-fct-W}
D(0)=-\frac{\sqrt{w(y_{1})-w(y_{2})}}{\sqrt{w_{y_{2}}(-1/\sqrt{x_{1}})w_{y_{2}}(1/\sqrt{x_{2}})\widetilde{d}'(y_{2})w'(y_{2})}}\frac{1}{2i\pi}\int_{\mathcal L}\frac{zw'(z)}{\sqrt{P-Sw(z)+w(z)^{2}}}\dz,
\end{equation}
with 
\begin{equation}
\label{eq:P-S}
\left\{
\begin{array}{l c l c l }
S &=& w(y_{1})+w(y_{2}) &=& \displaystyle\sqrt{2P-\frac{1}{4W^{2}}\left(W^{6}-20W^{3}-8 \right)},
\\
P &=& w(y_{1})w(y_{2}) &=& \displaystyle\frac{(1-W^{3})^{3/2}}{W^{2}}.
\end{array}
\right.
\end{equation}
In order to derive \eqref{eq:D0-fct-W}, we start by writing the integrand of \eqref{eq:expression_D(0)} in terms of $w$:
\begin{equation*}
\int_{\mathcal L}\frac{zw'_{y_{2}}(z)}{\sqrt{w_{y_{2}}(z)-w_{y_{2}}(y_{1})}w_{y_{2}}(z)}\dz=-\sqrt{w(y_{1})-w(y_{2})}\int_{\mathcal L}\frac{zw'(z)}{\sqrt{(w(z)-w(y_{1}))(w(z)-w(y_{2}))}}\dz.
\end{equation*}
Then, note that $\widetilde{d}(y)=-4t^{2}(y-y_{1})(y-y_{2})(y-y_{3})=-4t^{2}(y-y_{1})(y-y_{2})\left(y-\frac{1}{W^{2}}\right)$. By identification we have $y_{1}+y_{2}=\frac{1}{4t^{2}}-\frac{1}{W^{2}}$ and $y_{1}y_{2}=\frac{W^{2}}{4}$. On the one hand, we can deduce that 
\begin{equation*}
P=\left(\frac{1}{y_{1}}-\frac{1}{W}\right)\left(\frac{1}{y_{2}}-\frac{1}{W}\right)\sqrt{(1-y_{1}W^{2})(1-y_{2}W^{2})}=\frac{-(W-2t)(W-3t)}{W^{5}t^{2}}\sqrt{W^{6}t^{2}-W^{2}+8t^{2}}.
\end{equation*}
On the other hand, 
\begin{equation*}
S^{2}=\left(1-y_{1}W^{2}\right)\left(\frac{1}{y_{1}^{2}}-\frac{2}{y_{1}W}+\frac{1}{W^{2}}\right)+\left(1-y_{2}W^{2}\right)\left(\frac{1}{y_{2}^{2}}-\frac{2}{y_{2}W}+\frac{1}{W^{2}}\right)+2P.
\end{equation*}
Both equations can be simplified into \eqref{eq:P-S}, using several times the minimal polynomial of $W$.

\paragraph{Series expansion.}

Let us first expand in $t$ the factor in front of the integral in \eqref{eq:D0-fct-W}; we get
\begin{equation*}
-\frac{\sqrt{w(y_{1})-w(y_{2})}}{\sqrt{w_{y_{2}}(-1/\sqrt{x_1})w_{y_{2}}(1/\sqrt{x_2})\widetilde{d}'(y_2)w'(y_2)}}=-\frac{1}{t}+O \left( {t}^{{10}} \right).
\end{equation*}
(One could even prove that the left-hand side of the above equation is identically equal to $-\frac{1}{t}$.) Then the factor in the integral in \eqref{eq:D0-fct-W} may be written as
\begin{align*}
\frac{zw'(z)}{\sqrt{P-Sw(z)+w(z)^{2}}}=&-\frac{1}{2z}\,{W}+ \left( -\frac{1}{4z^{2}}\,+\frac{z}{4} \right) {W}^{2}-\frac{1}{8{z}^{3}}\,{W}^{3}+ \left( -\frac{1}{16{z}^{4}}\,+\frac{3z^{2}}{16}\,\right) {W}^{4}
 \\
 &+ \left( -\frac{z}{32}-\frac{1}{16z^{2}}\,-\frac{1}{32z^{5}}\, \right) {
W}^{5}+ \left( -{\frac {3}{32\,{z}^{3}}}-{\frac {1}{64\,{z}^{6}}}+{
\frac {5\,{z}^{3}}{32}} \right) {W}^{6}
\\
&+ \left( -\frac{z^{2}}{32}\,+{\frac 
{1}{64\,z}}-{\frac {3}{32\,{z}^{4}}}-{\frac {1}{128\,{z}^{7}}}
 \right) {W}^{7}
 \\
 &+ \left( {\frac {1}{64\,{z}^{2}}}-{\frac {1}{256\,{z}^
{8}}}-{\frac {5}{64\,{z}^{5}}}-{\frac {z}{128}}+{\frac {35\,{z}^{4}}{
256}} \right) {W}^{8}
\\
&+ \left( -{\frac {1}{512\,{z}^{9}}}-{\frac {15}{
256\,{z}^{6}}}-{\frac {15\,{z}^{3}}{512}} \right) {W}^{9}
+O \left( {W}^{{10}} \right)
\end{align*}
and when we integrate the latter on the unit circle. Coming back to a series in $t$ we obtain
\begin{multline*}
 \frac{1}{2i\pi}\int_{\mathcal C(0,1)}\frac{zw'(z)}{\sqrt{P-Sw(z)+w(z)^{2}}}\dz=\\-t-4\,{t}^{4}-46\,{t}^{7}-706\,{t}^{10}-12472\,{t}^{13}
  -239632\,{t}^{
16}-4869440\,{t}^{19}+O \left( {t}^{21} \right).
\end{multline*}
Finally, putting every ingredients in order, we deduce \eqref{eq:series_expansion_reverse_Kreweras}.

\subsection{Proof of Lemma \ref{lem:BVP}}
\label{subsec:proof_Lemma_BVP}

Assuming that $D(y)$ may be continued as in the statement of Lemma \ref{lem:BVP}, it is easy to prove the boundary condition \eqref{eq:bvp_RCshift_3/4}. We evaluate the functional equation \eqref{eq:functional_equation_octant} at $Y_0(x)$ for $x$ close to $[x_1,x_2]$:
\begin{equation}
\label{cancel_kernel_3/4}
     -\frac{1}{2}xY_0(x)+c(x)L_{-0}(x)-x(x\widetilde{a}( Y_0(x))+\frac{1}{2}\widetilde{b}(Y_0(x)) )D(Y_0(x))+\frac{1}{2}txD(0)=0.
\end{equation}
We obtain two new equations by letting $x$ go to any point of $[x_1,x_2]$ with a positive (resp.\ negative) imaginary part. We do the subtraction of the two equations and obtain \eqref{eq:bvp_RCshift_3/4}. 

We now prove the analytic continuation. Note that similar results are obtained in \cite[Thm.~3.2.3]{FaIaMa-17}, \cite[Thm.~5]{Ra-12} and \cite[Prop.~19]{BeBMRa-17}. We follow the same idea as in \cite[Thm.~5]{Ra-12}. Starting from \eqref{eq:functional_equation_octant} we can prove that
\begin{equation*}
     2c(X_0(y))L_{-0}(X_0(y))+X_0(y)\sqrt{\widetilde{d}(y)}D(y)-X_0(y)y=0
\end{equation*}
for $y\in \{y\in \mathbb{C} : \vert X_0(y)\vert<1\} \cap \mathcal{D}$, and then  
\begin{equation*}
     2c(X_0(y))\sum_{n\geq 0,j\geq 0}c_{0,-j-1}(n)X_0(y)^j t^n+\sqrt{\widetilde{d}(y)}D(y)-y=0
\end{equation*}
for $y\in \{y\in \mathbb{C} : |X_0(y)|<1 \text{ and }  X_0(y)\neq 0\} \cap \mathcal{D}$ which can be continued in $ \mathcal{G}_{\mathcal{L}} \cup \mathcal{D}$. Being a power series, $D(y)$ is analytic on $\mathcal{D}$ and on $(\mathcal{G}_{\mathcal{L}} \cup \mathcal{D})\setminus \mathcal{D}$, $D(y)$ may have the same singularities as $X_0$ and $\sqrt{\widetilde{d}(y)}$, namely the branch cuts $[y_1,y_2]$ and $[y_3,y_4]$. But none of these segments belong to $(\mathcal{G}_{\mathcal{L}} \cup \mathcal{D})\setminus \mathcal{D}$, see Lemma \ref{lem:properties_curves}. Then $D(y)$ can be analytically continued to the domain $\mathcal{G}_{\mathcal{L}} \cup \mathcal{D}$. Using the same idea, we can prove that $D(y)$ has finite limits on $\mathcal{L}$. From  \eqref{cancel_kernel_3/4},  it is enough to study the zeros of $x\widetilde{a}( Y_0(x))+\frac{1}{2}\widetilde{b}(Y_0(x))$ for $x$ in $[x_1,x_2]$. Using the relation $X_0(Y_0(x))=x$ valid in $\mathcal G_\mathcal M$ (see \cite[Cor.~5.3.5]{FaIaMa-17}) shows that it recurs to study the zeros of $\widetilde{d}(y)$ for $y\in(\mathcal{G}_{\mathcal{L}} \cup \mathcal{D})\setminus \mathcal{D}$. None  of these roots ($y_1, y_2, y_3, y_4$) belong to the last set, then $D$ has finite limits on $\mathcal{L}$.

\subsection{Proof of Theorem \ref{thm:first_main}}
\label{sub:proof-thmbvpM1}

The function $\sqrt{\widetilde{d}(y)}D(y)$ satisfies a BVP of Riemann-Carleman type on $\mathcal{L}$, see Lemma \ref{lem:BVP}. Following the literature \cite{FaIaMa-17,Ra-12}, we use a conformal mapping to transform the latter into a more classical Riemann-Hilbert BVP. Throughout this section, we shall use notation and results of Appendix~\ref{app:RHBVP}.

More precisely, let $w$ be a conformal gluing function for the set $\mathcal{G}_{\mathcal{L}}$ in the sense of Definition \ref{defn:CGF}, and let $\mathcal{U}$ denote the real segment 
\begin{equation*}
     \mathcal{U} = w(\mathcal{L}).
\end{equation*}
(With this notation, $w$ is a conformal mapping from $\mathcal{G}_{\mathcal{L}}$ onto the cut plane $\mathbb{C} \setminus \mathcal{U}$.) The segment $\mathcal{U}$ is oriented such that the positive direction is from $w(Y(x_2))$ to $w(Y(x_1))$, see Figure \ref{fig:conformal-functions}. 

Define $v$ as the inverse function of $w$. The latter is meromorphic on $\mathbb{C} \setminus \mathcal{U}$. Following the notation of Appendix~\ref{app:RHBVP} and \cite{FaIaMa-17}, we denote by $v^+$ and $v^-$ the left and right limits of $v$  on $\mathcal{U}$. The quantities $v^+$ and $v^-$ are complex conjugate on $\mathcal{U}$, and more precisely, since $w$ preserves angles, we have for $u\in\mathcal{U}$ and $y\in \mathcal{L}_0$ 
\begin{equation*}
\left\{
\begin{array}{l c l c l}
v^+(u) &=& v^+(w(y)) &=& y, \\
v^-(u) &=& v^-(w(y)) &=& \overline{y},
\end{array}
\right.
\end{equation*}
see Figure \ref{fig:conformal-functions} for an illustration of the above properties. 

Then \eqref{eq:bvp_RCshift_3/4} may be rephrased as the following new boundary condition on $\mathcal{U}$:
\begin{equation}
\label{eq:pbl_RH}
     D(v^+(u))=\frac{\sqrt{\widetilde{d}( v^-(u))}}{\sqrt{\widetilde{d}( v^+(u))}} D(v^-(u)) + \frac{v^+(u) - v^-(u)}{\sqrt{\widetilde{d}( v^+(u))}}.
\end{equation}
As explained in Appendix \ref{app:RHBVP} (see in particular Definition \ref{def:index}), the first step in the way of solving the Riemann-Hilbert problem with boundary condition \eqref{eq:pbl_RH} is to compute the index of the BVP.

\begin{proposition}
\label{prop:computation_index}
The index of $\frac{\sqrt{\widetilde{d}( v^-(u))}}{\sqrt{\widetilde{d}( v^+(u))}}$ along the curve $\mathcal U$ is $-1$.
\end{proposition}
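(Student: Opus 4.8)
The plan is to compute the index of the function
\[
	G(u) = \frac{\sqrt{\widetilde{d}(v^-(u))}}{\sqrt{\widetilde{d}(v^+(u))}}
\]
as $u$ traverses $\mathcal{U}$ in the positive direction, where by definition the index is $\frac{1}{2\pi}$ times the variation of $\arg G(u)$ along $\mathcal{U}$ (equivalently, the winding number of $G$ around $0$). The key observation is that $v^+$ and $v^-$ are complex conjugate on $\mathcal{U}$, i.e.\ if $u = w(y)$ with $y \in \mathcal{L}_0$ then $v^+(u) = y$ and $v^-(u) = \overline{y}$. Hence $\widetilde{d}(v^-(u)) = \widetilde{d}(\overline{y}) = \overline{\widetilde{d}(y)} = \overline{\widetilde{d}(v^+(u))}$, since $\widetilde{d}$ has real coefficients. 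Writing $\widetilde{d}(y) = \rho(y)e^{i\theta(y)}$ in polar form (valid off the zeros of $\widetilde{d}$, which by Lemma~\ref{lem:properties_curves} lie inside $\mathcal{G}_\mathcal{L}$ and not on $\mathcal{L}$), we get $G(u) = e^{-i\theta(y)}$ up to sign choices of the square roots, so $|G| = 1$ and the index is
\[
	-\frac{1}{2\pi}\,\bigl[\arg\widetilde{d}(y)\bigr]_{y\in\mathcal{L}_0,\ \text{traversed as } u \text{ increases}},
\]
i.e.\ minus the winding of $\widetilde{d}$ along the upper half $\mathcal{L}_0$ of the curve.

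Next I would make this winding explicit. As $u$ runs over $\mathcal{U}$ from $w(Y(x_2))$ to $w(Y(x_1))$, the point $y = v^+(u)$ runs over $\mathcal{L}_0$ from $Y(x_2)$ to $Y(x_1)$, i.e.\ it starts at the rightmost real intersection point $Y(x_2) > 0$, goes around the upper half of $\mathcal{L}$, and ends at $Y(x_1) \le 0$ (or at the point at infinity along $\mathcal{L}$, in the unbounded case). Along this arc, $\widetilde{d}(y) = \widetilde{b}(y)^2 - 4\widetilde{a}(y)\widetilde{c}(y)$. Recall from the construction of $\mathcal{L}$ in~\eqref{eq:curve_L} that $\mathcal{L} = \{y : K(x,y)=0,\ x\in[x_1,x_2]\}$, and on this curve $X(y) \in [x_1,x_2]$, so $\widetilde{d}(y) = (\,\widetilde{a}(y)(2X(y)y + \text{stuff})\,)$-type expressions; more directly, $\sqrt{\widetilde{d}(y)} = \pm(2\widetilde{a}(y)X(y) + \widetilde{b}(y))$ on $\mathcal{L}$, and since $X(y)$ and $\widetilde{a},\widetilde{b},\widetilde{c}$ are all continuous and $X(y)$ traces the real segment $[x_1,x_2]$, the argument of $\sqrt{\widetilde{d}(y)}$ can be tracked. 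The cleanest route: parametrize by $x \in [x_1,x_2]$ via $y = Y_0(x)$ for the upper arc, so that $\sqrt{\widetilde{d}(y)}$ becomes, after using the relation between the two kernels, proportional to $\sqrt{d(x)}$ times an explicit factor, and $d(x) < 0$ on $(x_1,x_2)$ by Lemma~\ref{lem:properties_branch_points}; thus $\sqrt{d(x)}$ is purely imaginary and changes sign in a controlled way at the endpoints. Counting the net rotation picked up at $x_1$ and $x_2$ (where $\widetilde{d}$ vanishes to first order, contributing half-turns) and checking there is no other cancellation on the open arc yields a total winding of $\widetilde{d}$ along $\mathcal{L}_0$ equal to $+1$, whence the index is $-1$.

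The main obstacle I anticipate is the careful bookkeeping of the square-root branches and the contributions at the branch points $x_1, x_2$ (equivalently $y_1, y_2$), especially in the unbounded case where one endpoint of $\mathcal{L}_0$ escapes to infinity and $x_1 = 0$ (so that $\widetilde{a}$ or $\widetilde{c}$ may vanish there and $Y_i$ may be infinite, per Lemma~\ref{lem:properties_curves}). One must verify that the chosen determination of $\sqrt{\widetilde{d}(y)}$ — fixed so that $\sqrt{d(x)} > 0$ on $(x_2,x_3)$ as in~\eqref{eq:global_inequality} — is consistent along the whole arc and that the limiting behavior at infinity contributes the expected amount to the argument variation. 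A safe way to handle this is to deform: replace $\mathcal{L}$ by a large but finite curve (or equivalently compute the index on $w(\mathcal{L})$, a genuine finite segment $\mathcal{U}$, where $v^\pm$ are bounded), reducing everything to a finite computation. Once the winding of $\widetilde{d}$ along $\mathcal{L}_0$ is pinned down to be $1$, the conclusion $\mathrm{ind}\,G = -1$ follows immediately, and this nonzero index is precisely what forces the extra exponential factors $\Psi, \Psi^+$ in Theorem~\ref{thm:first_main}.
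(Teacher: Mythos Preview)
Your opening reduction is correct and matches the paper: since $v^+(u)$ and $v^-(u)$ are complex conjugates and $\widetilde d$ has real coefficients, $G(u)=\sqrt{\widetilde d(\overline y)}/\sqrt{\widetilde d(y)}$ has modulus $1$ and its index equals $-\tfrac{1}{2\pi}[\arg\widetilde d(y)]$ along the relevant arc of $\mathcal L$. This is exactly how the paper starts.

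Where you diverge is in the actual computation of that argument variation. You attempt a direct parametrization by $x\in[x_1,x_2]$, track $\sqrt{d(x)}$, and speak of ``half-turns'' contributed at the endpoints ``$x_1,x_2$ (equivalently $y_1,y_2$), where $\widetilde d$ vanishes to first order''. This is a genuine confusion: the endpoints of $\mathcal L_0$ are $Y(x_1)$ and $Y(x_2)$, which are \emph{not} zeros of $\widetilde d$; the zeros $y_1,y_2$ of $\widetilde d$ lie strictly inside $\mathcal G_{\mathcal L}$ (Lemma~\ref{lem:properties_curves}), not on the curve. So no half-turn bookkeeping at the endpoints is relevant, and the parametrization-by-$x$ route, while perhaps salvageable, is much messier than necessary and as written does not close.

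The paper bypasses all of this with the argument principle on the full closed curve $\mathcal L$: since $\widetilde d$ is a polynomial with no poles and exactly two zeros ($y_1,y_2$) in $\mathcal G_{\mathcal L}$, one has $\Ind_{\mathcal L}\widetilde d=2$, hence $\Ind_{\mathcal L}\sqrt{\widetilde d}=1$. Then, because $v^+$ parametrizes $\mathcal L_0$ and $v^-$ parametrizes $\mathcal L_1$ (with appropriate orientations) as $u$ runs over $\mathcal U$,
\[
\Ind_{\mathcal U}G
=\Ind_{\mathcal U}\sqrt{\widetilde d(v^-)}-\Ind_{\mathcal U}\sqrt{\widetilde d(v^+)}
=-\Ind_{\mathcal L_1}\sqrt{\widetilde d}-\Ind_{\mathcal L_0}\sqrt{\widetilde d}
=-\Ind_{\mathcal L}\sqrt{\widetilde d}=-1.
\]
This is a two-line argument with no branch-point analysis and no unbounded-curve case distinction; I would replace your second half with it.
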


\begin{proof}
First of all, let us recall that when $\mathcal{L}$ is a closed curve of interior $\mathcal{G}_{\mathcal{L}}$ and $G$ is a non-constant, meromorphic function without zeros or poles on $\mathcal{L}$, then
\begin{equation*}
\Ind_\mathcal{L} G=\frac{1}{2i\pi}\int_\mathcal{L}\frac{G'(z)}{G(z)}\dz=Z-P,
\end{equation*} 
where $Z$ and $P$ are respectively the numbers of zeros and poles of $G$ in $\mathcal{G}_{\mathcal{L}}$, counted with multiplicity. 

Applying this result to the function $d(y)$, which in $\mathcal{G}_{\mathcal L}$ has no pole and exactly two zeros (at $y_1$ and $y_2$---remember that $y_3$ and $y_4$ are also roots of $d(y)$ but are not in $\mathcal{G}_{\mathcal L}$), we have  $\Ind_{\mathcal{L}}\widetilde{d}(y)=2$, see Figure \ref{fig:index_Gessel} for an illustration.

We get then
\begin{align*}
\Ind_{\mathcal{U}}\frac{\sqrt{\widetilde{d}(v^-(u))}}{\sqrt{\widetilde{d}(v^+(u))}}&=
\Ind_{\mathcal{U}}\sqrt{\widetilde{d}(v^-(u))} - \Ind_{\mathcal{U}}\sqrt{\widetilde{d}(v^+(u))}
=-\Ind_{\mathcal{L}_1}\sqrt{\widetilde{d}(y)}-\Ind_{\mathcal{L}_0}\sqrt{\widetilde{d}(y)}
\\
&=-\Ind_{\mathcal{L}}\sqrt{\widetilde{d}(y)}
=-\frac{1}{2}\Ind_{\mathcal{L}}\widetilde{d}(y)
=-1.\qedhere
\end{align*}
\end{proof}

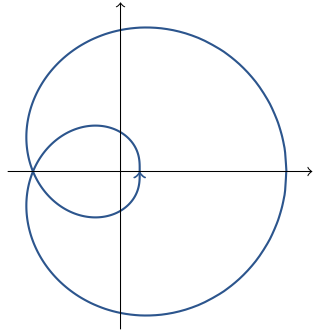
\begin{figure}[htb]
  \centering
  \begin{tikzpicture}
\begin{scope}[scale=0.015]
\draw [dblue!90, thick, ->] plot[smooth] file{./figures/R0.dat};
\draw [dblue!90, thick] plot[smooth] file{./figures/R1.dat};
\draw [->] (0,-140) -- (0,150);
\draw [->] (-100,0) -- (170,0);
\end{scope}
\end{tikzpicture}
   \caption{Plot of $\widetilde{d}(y)$ when $y$ lies on $\mathcal{L}$, in the case of Gessel's step set}
  \label{fig:index_Gessel}
\end{figure}

With Theorem \ref{thm:Sol-BVP}, we deduce a contour-integral expression for the function $D(v(u))$, namely
\begin{equation*}
     D(v(u))=\frac{\Psi(u)}{2i\pi}\int_{\mathcal{U}}\frac{v^+(s) - v^-(s)}{\sqrt{\widetilde{d}\left( v^+(s)\right)}}\frac{1}{\Psi^+(s)(s-u)}\ds.
\end{equation*}
With the changes of variable $u=w(y)$ and $s=w(z)$, we easily have the result of Theorem \ref{thm:first_main}.

\subsection{Anti-Tutte's invariant}
\label{sub:Anti-Tutte's}

Our aim here is to find a function $f$ satisfying to the decoupling condition \eqref{eq:decoupling_d_f}, namely
\begin{equation*}
     \frac{\sqrt{\widetilde{d}(\overline{y})}}{\sqrt{\widetilde{d}(y)}}=\frac{f(\overline{y})}{f(y)},\qquad \forall y\in \mathcal L.
\end{equation*}
Indeed, such a function is used in a crucial way in Theorem \ref{thm:second_main}.

Before giving a systematic construction of a function $f$ as above, we start by an example. For Gessel's model, we easily prove that the function
\begin{equation*}
     g(y) = \frac{y}{t(y+1)^2}
\end{equation*}
satisfies $g(Y_0)g(Y_1)=1$, and so for $x\in[x_1,x_2]$ the condition \eqref{eq:anti_Tutte} announced in the introduction. By the same reasoning as in the proof of Theorem \ref{thm:solving_decoupling_condition} below, we deduce that
\begin{equation*}
     f(y)=\frac{g(y)}{g'(y)}=\frac{y(y+1)}{y-1}
\end{equation*}
satisfies the decoupling condition \eqref{eq:decoupling_d_f}. 

However, a simple rational expression of $f$ as above does not exist in general. Instead, our general construction consists in writing $f$ in terms of a conformal mapping. Our main result is the following. 
\begin{thm}
\label{thm:solving_decoupling_condition}
Let $g$ be any conformal mapping from $\mathcal{G}_\mathcal L$ onto the unit disc $\mathcal D$, with the property that $g(\overline{y})=\overline{g(y)}$. Then the function $f$ defined by 
\begin{equation*}
     f=\frac{g}{g'}
\end{equation*}
satisfies the decoupling condition \eqref{eq:decoupling_d_f}. Moreover, $f$ is analytic in $\mathcal{G}_\mathcal L$ and has finite limits on $\mathcal L$. 

Finally, defining $h(z)=-z+\sqrt{z^2-1}$ and letting $w$ be a conformal gluing function as in Definition~\ref{defn:CGF}, one can choose
\begin{equation}
\label{eq:expression_g}
     g(y)=h\left(\frac{2}{w(Y(x_2))-w(Y(x_1))}\left(w(y)-\frac{w(Y(x_1))+w(Y(x_2))}{2}\right)\right),
\end{equation}
see Figure \ref{fig:conformal-functions}.
\end{thm}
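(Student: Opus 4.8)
The plan is to verify the three claims in turn: that $f=g/g'$ satisfies the decoupling condition, that $f$ is analytic with finite limits on $\mathcal{L}$, and finally that the explicit formula \eqref{eq:expression_g} does define a conformal map onto $\mathcal D$ with the conjugation-symmetry property. First I would record the defining property of $g$: being a conformal map $\mathcal{G}_\mathcal L\to\mathcal D$ which commutes with complex conjugation, it maps $\mathcal L$ onto the unit circle, so $\abs{g(y)}=1$ for $y\in\mathcal L$, and moreover $g(\overline y)=\overline{g(y)}=1/g(y)$ there. Thus $g$ is an anti-Tutte invariant in the sense of \eqref{eq:anti_Tutte}. Differentiating the relation $g(\overline y)=\overline{g(y)}$ is not quite what is needed; instead I would differentiate the boundary identity $g(y)g(\overline y)=1$ along the curve $\mathcal L$. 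Parametrising $\mathcal{L}_0$ by $y=y(s)$ and using $\overline y = \overline{y(s)}$, differentiation in $s$ gives $g'(y)\dot y\, g(\overline y)+g(y)g'(\overline y)\dot{\overline y}=0$, i.e. $g'(y)\dot y\,/g(y) = -g'(\overline y)\dot{\overline y}/g(\overline y)$. Rearranging, $\dfrac{g(y)}{g'(y)}\cdot\dfrac{1}{\dot y} = -\dfrac{g(\overline y)}{g'(\overline y)}\cdot\dfrac{1}{\dot{\overline y}}$; since $\dot{\overline y}=\overline{\dot y}$ this yields $f(y)/\dot y = -\overline{f(\overline y)/\dot y}$ when $f$ is real-analytic... the cleaner route is the following.

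The key algebraic identity is the standard one for Tutte-type invariants: if $G(Y_0(x))=1/G(Y_1(x))$ for $x\in[x_1,x_2]$ (equivalently $G(y)G(\overline y)=1$ on $\mathcal L$), then the logarithmic derivative $G'/G$ satisfies $\frac{d}{dx}\log G(Y_0(x)) = -\frac{d}{dx}\log G(Y_1(x))$, hence $\frac{G'(Y_0)}{G(Y_0)}Y_0'(x)=-\frac{G'(Y_1)}{G(Y_1)}Y_1'(x)$. On the other hand, differentiating the kernel relation $K(x,Y_i(x))=0$ and using $K(x,y)=\widetilde a(Y_0+Y_1)\cdots$—more precisely using $a(x)Y_0Y_1=c(x)$ and $a(x)(Y_0+Y_1)=-b(x)$—one obtains that $Y_0'(x)$ and $Y_1'(x)$ are expressible through $\sqrt{d(x)}=a(x)(Y_1-Y_0)$, so that $Y_0'/Y_1'$ is (up to sign and rational factors) $\sqrt{d(\cdot)}$-related; tracking this through gives exactly
\[
     \frac{g(Y_0)/g'(Y_0)}{g(Y_1)/g'(Y_1)}=\frac{Y_1'(x)}{Y_0'(x)} = -\frac{\sqrt{\widetilde d(Y_0)}\,\cdots}{\sqrt{\widetilde d(Y_1)}\,\cdots},
\]
and after simplification $f(y)/f(\overline y)=\sqrt{\widetilde d(\overline y)}/\sqrt{\widetilde d(y)}$, which is \eqref{eq:decoupling_d_f}. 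I would carry this out by working with the $x$-parametrisation $y=Y_0(x)$, $\overline y=Y_1(x)$ for $x\in[x_1,x_2]$, where $\widetilde d(Y_i(x))$ can be related to $d(x)$ via the symmetric-function identities for the kernel, rather than directly on $\mathcal L$. For analyticity: $g$ is conformal (hence non-vanishing derivative) and injective on $\mathcal{G}_\mathcal L$, so $g'\neq 0$ there and $f=g/g'$ is analytic; the only possible issue is a zero of $g$, but $g$ vanishes exactly once in $\mathcal{G}_\mathcal L$ (at the preimage of $0\in\mathcal D$), and there $g'\neq0$, so $g/g'$ has a simple zero, not a pole—hence $f$ is analytic throughout $\mathcal{G}_\mathcal L$ and extends continuously to $\mathcal L$ because both $g$ and $g'$ do (the latter by the Definition~\ref{defn:CGF} properties transported through $h$ and $w$).

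For the explicit formula \eqref{eq:expression_g}: the map $w$ sends $\mathcal{G}_\mathcal L$ conformally onto $\mathbb C\setminus\mathcal U$ where $\mathcal U=w(\mathcal L)$ is the real segment with endpoints $w(Y(x_1))$ and $w(Y(x_2))$ (recall from Lemma~\ref{lem:properties_curves} and the discussion preceding Proposition~\ref{prop:computation_index} that $Y(x_1),Y(x_2)$ are real and lie on $\mathcal L$, mapping to the ends of $\mathcal U$). The affine map $z\mapsto \frac{2}{w(Y(x_2))-w(Y(x_1))}\bigl(z-\frac{w(Y(x_1))+w(Y(x_2))}{2}\bigr)$ normalises $\mathcal U$ to $[-1,1]$, so the composite sends $\mathcal{G}_\mathcal L$ onto $\mathbb C\setminus[-1,1]$. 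Finally $h(z)=-z+\sqrt{z^2-1}$ (a branch of the inverse Joukowski map) sends $\mathbb C\setminus[-1,1]$ conformally onto the unit disc $\mathcal D$: indeed the Joukowski map $\zeta\mapsto -\frac12(\zeta+\zeta^{-1})$ sends $\mathcal D$ onto $\mathbb C\setminus[-1,1]$, and $h$ is its inverse on the appropriate branch (one checks $h$ maps $\infty\mapsto 0$ and $[-1,1]\mapsto$ unit circle, $\abs{h(z)}<1$ off the cut). Composition of conformal maps being conformal, $g$ of \eqref{eq:expression_g} is a conformal bijection $\mathcal{G}_\mathcal L\to\mathcal D$. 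The symmetry $g(\overline y)=\overline{g(y)}$ follows because $w(\overline y)=\overline{w(y)}$ (property of a conformal gluing function, since $w(y)=w(\overline y)$ on $\mathcal L$ forces $w$ to be real on the real axis and hence to commute with conjugation by Schwarz reflection), because $w(Y(x_1)),w(Y(x_2))$ are real so the affine normalisation has real coefficients, and because $h$ has real Taylor coefficients (with the branch of the square root chosen symmetric about the real axis) so $h(\overline z)=\overline{h(z)}$.

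\textbf{Main obstacle.} The genuinely delicate step is the verification of the decoupling identity $f(y)/f(\overline y)=\sqrt{\widetilde d(\overline y)}/\sqrt{\widetilde d(y)}$: one must pass from the multiplicative boundary relation $g(y)g(\overline y)=1$ to a relation for the logarithmic derivative, and then identify the resulting ratio $Y_1'(x)/Y_0'(x)$ of derivatives of the kernel roots with the square-root-of-discriminant ratio, keeping careful track of which branch of each square root is in force and of the rational factors $\widetilde a, a$ coming from the non-monic kernel. The sign conventions fixed after \eqref{eq:algebraic_expressions_Y_X} (namely $Y_0=Y_-$, $Y_1=Y_+$ and $\sqrt{d(x)}>0$ on $(x_2,x_3)$) must be used to pin down the overall sign so that the final identity comes out without an extraneous $-1$.
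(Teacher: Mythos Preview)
Your proposal is correct and follows essentially the same route as the paper: parametrise $\mathcal L$ by $y=Y_0(x)$, $\overline y=Y_1(x)$ for $x\in[x_1,x_2]$, use $g(Y_0)g(Y_1)=1$, differentiate to obtain $f(Y_0)/f(Y_1)=-Y_0'/Y_1'$, and then identify this ratio with $\sqrt{\widetilde d(Y_0)}/\sqrt{\widetilde d(Y_1)}$ by differentiating $K(x,Y_0(x))=0$; the explicit $g$ is built exactly as you say, via $w$, an affine normalisation of the slit, and the inverse Joukowski map $h$. The paper makes the discriminant identification slightly more explicit by deriving the pointwise relation $-\sqrt{d(x)}\,Y_0'(x)=\sqrt{\widetilde d(Y_0(x))}$ (using $X_0(Y_0(x))=x$) and then taking the ratio of its two boundary values, which cleanly disposes of the ``rational factors'' you mention; and for the non-vanishing of $g'$ on $\mathcal L$ it invokes the classical fact that a conformal map onto a Jordan domain with analytic boundary extends univalently past the boundary, which is exactly the missing ingredient behind your claim that $g'$ extends continuously and is non-zero on $\mathcal L$.
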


To obtain the expression of $g$ in \eqref{eq:expression_g} for a given model, we refer to the list of conformal mappings $w$ provided in Appendix \ref{app:expression_Gluing}.

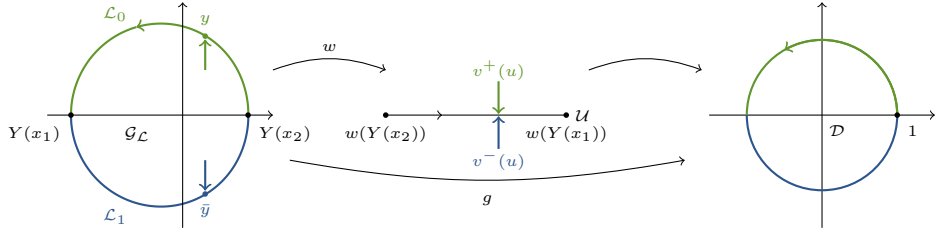
\begin{figure}[htb]
\centering
\begin{tikzpicture}
\begin{scope}[scale=0.15]
\draw [dblue!90, thick] plot[smooth] file{./figures/Y1.dat};
\draw [<-, dgreen!90, thick] plot[smooth] file{./figures/Y0bis1.dat};
\draw [dgreen!90, thick] plot[smooth] file{./figures/Y0bis2.dat};
\draw [->] (0,-10) -- (0,10);
\draw [->] (-12,0) -- (8,0);
\draw[thick, black, fill=black](-9.9,0) circle (0.15 cm)  node[below left]{\tiny{$Y(x_1)$}};
\draw[thick, black, fill=black](5.8,0) circle (0.15 cm)  node[below right]{\tiny{$Y(x_2)$}};
\draw [black] (-4, -1.7) node{\tiny{$\mathcal G_\mathcal L$}};
\draw [dgreen!90] (-6,9) node{\tiny{$\mathcal{L}_0$}};
\draw [dblue!90] (-6,-9) node{\tiny{$\mathcal{L}_1$}};
\draw[thick, dgreen!90, fill=dgreen!90] (2,7) circle (0.15 cm) node[above]{\tiny{$y$}};
\draw[thick, dblue!90, fill=dblue!90] (2,-7) circle (0.15 cm) node[below]{\tiny{$\bar{y}$}};
\draw [->, thick, dgreen!90] (2, 4) -- (2, 6.7);
\draw [->, thick, dblue!90] (2, -4) -- (2, -6.7);
\end{scope}

\begin{scope}[scale=0.15, xshift=6cm]
\draw[->] (2,4) to[out=20,in=160] (12,4);
\draw [] (7,6) node{\tiny{$w$}};
\draw[->] (30,4) to[out=20,in=160] (40,4);
\draw[->] (3.5,-4) to[out=350,in=190] (38.5,-4) ;
\draw [] (21,-7.5) node{\tiny{$g$}};
\end{scope}

\begin{scope}[scale=0.15, xshift=18cm]
\draw [->] (0,0) -- (5,0);
\draw [] (5,0) -- (16,0);
\draw [] (17.5,0) node{\tiny{$\mathcal{U}$}};
\draw [dgreen!90] (10,4) node{\tiny{$v^+(u)$}};
\draw [dblue!90] (10,-4) node{\tiny{$v^-(u)$}};
\draw [->, thick, dgreen!90] (10, 3) -- (10, 0.1);
\draw [->, thick, dblue!90] (10, -3) -- (10, -0.1);
\draw[thick, black, fill=black](0,0) circle (0.15 cm)  node[below]{\tiny{$w(Y(x_2))$}};
\draw[thick, black, fill=black](16,0) circle (0.15 cm)  node[below]{\tiny{$w(Y(x_1))$}};
\end{scope}

\begin{scope}[scale=1,  xshift=8.5cm]
\draw [->] (-1.5,0) -- (1.5,0);
\draw [->] (0,-1.5) -- (0,1.5);
\draw[thick, dgreen!90, shift={(1,0)}]  (0:0) arc (0:180:1);
\draw[->, thick, dgreen!90, shift={(1,0)}]  (0:0) arc (0:120:1);
\draw[thick, dblue!90, shift={(1,0)}]  (0:0) arc (0:-180:1);
\draw[thick, black, fill=black](1,0) circle (0.0225 cm)  node[below right]{\tiny{$1$}};
\draw [black] (0.2,-0.2) node{\tiny{$\mathcal{D}$}};
\end{scope}
\end{tikzpicture}
\caption{Conformal gluing functions from $\mathcal G_\mathcal L$ to $\mathbb C\setminus \mathcal U$ and conformal mappings from $\mathcal G_\mathcal L$ to the unit disc~$\mathcal D$}
\label{fig:conformal-functions}
\end{figure}

\begin{proof}
We first prove that if $g$ is a conformal mapping from $\mathcal{G}_\mathcal L$ onto the unit disc $\mathcal D$ with the property that $g(\overline{y})=\overline{g(y)}$, then $f=\frac{g}{g'}$ satisfies the decoupling condition \eqref{eq:decoupling_d_f}. First, for $x\in[x_1,x_2]$ one has 
\begin{equation*}
     g(Y_0(x))g(Y_1(x))=g(Y_0(x))g(\overline{Y_0(x)})=g(Y_0(x))\overline{g(Y_0(x))}=\vert g(Y_0(x))\vert^2=1.
\end{equation*}
Differentiating the identity $g(Y_0(x))g(Y_1(x))=1$, one finds on $[x_1,x_2]$
\begin{equation*}
     \frac{f(Y_0(x))}{f(Y_1(x))}= - \frac{Y_0'(x)}{Y_1'(x)}.
\end{equation*}
To conclude the proof, we show that on $[x_1,x_2]$
\begin{equation}
\label{eq:decoupling_sqrt_d}
     \frac{\sqrt{\widetilde d(Y_0(x))}}{\sqrt{\widetilde d(Y_1(x))}}= - \frac{Y_0'(x)}{Y_1'(x)}.
\end{equation}
To that purpose, let us first consider $x\in\mathcal G_\mathcal M\setminus [x_1,x_2]$. Differentiating the identity $K(x,Y_0(x))=0$ in \eqref{eq:kernel_expanded} yields
\begin{equation}
\label{eq:first_id}
     Y'_0(x)(2a(x)Y_0(x)+b(x))=-(a'(x)Y_0(x)^2+b'(x)Y_0(x)+c'(x)).
\end{equation}
First, it follows from Section~\ref{subsec:Roots_of_the_kernel} that $2a(x)Y_0(x)+b(x)=- \sqrt{d(x)}$. Moreover, differentiating \eqref{eq:kernel_expanded} in $x$ and using the relation $X_0(Y_0(x))=x$ valid in $\mathcal G_\mathcal M$ (see \cite[Cor.~5.3.5]{FaIaMa-17}) shows that the right-hand side of \eqref{eq:first_id} satisfies
\begin{equation*}
     a'(x)Y_0(x)^2+b'(x)Y_0(x)+c'(x)=-\sqrt{\widetilde{d}(Y_0(x))}.
\end{equation*}
Then for $x\in\mathcal G_\mathcal M\setminus [x_1,x_2]$, Equation \eqref{eq:first_id} becomes
\begin{equation*}
     -\sqrt{d(x)}Y_0'(x)=\sqrt{\widetilde{d}(Y_0(x))}.
\end{equation*}
To complete the proof of \eqref{eq:decoupling_sqrt_d}, we let $x$ converge to a point $x\in[x_1,x_2]$ from above and then from below, and we compute the ratio of the two identities so-obtained. The minus sign in \eqref{eq:decoupling_sqrt_d} comes from that 
\begin{equation*}
     \lim_{x\downarrow[x_1,x_2]} \sqrt{d(x)}=-\lim_{x\uparrow[x_1,x_2]} \sqrt{d(x)},
\end{equation*}
see Section~\ref{subsec:Roots_of_the_kernel}.

\medskip

Our second point is to show that the function $g$ in \eqref{eq:expression_g} is a conformal mapping from $\mathcal{G}_\mathcal L$ onto the unit disc $\mathcal D$, which in addition is such that $g(\overline{y})=\overline{g(y)}$. This is obvious from our construction \eqref{eq:expression_g}, since as illustrated on Figure \ref{fig:conformal-functions}, $g=h\circ \widehat w$ is the composition of the conformal mapping $h$ from the cut plane $\mathbb C\setminus [-1,1]$ onto the unit disc, by the conformal mapping 
\begin{equation}
\label{eq:def_hat_w}
     \widehat w=\frac{2}{w(Y(x_1))-w(Y(x_2))}\left(w-\frac{w(Y(x_1))+w(Y(x_2))}{2}\right)
\end{equation}
from $\mathcal G_\mathcal L$ onto the same cut plane.

\medskip

The third item is to prove that $f$ has finite limits on $\mathcal L$, for any initial choice of conformal mapping $g$. We may propose two different proofs of this fact. First, we could prove that the function $f$ constructed from the particular function $g$ in \eqref{eq:expression_g} has the desired properties (this follows from a direct study). Then as any two suitable conformal mappings $g_1$ and $g_2$ are necessarily related by a linear fractional transformation
\begin{equation*}
     g_1=\frac{\alpha g_2+\beta}{\gamma g_2+\delta},
\end{equation*}
it is easily seen that all functions have indeed the good properties.

The second idea is to use a very general statement on conformal mapping. Namely, any conformal mapping which maps the unit disc onto a Jordan domain (the domain $\mathcal G_\mathcal L$) with analytic boundary (our curve $\mathcal L$) can be extended to a univalent function in a larger disc, see \cite[Sec.~1.6]{Du-83}. As the extension is univalent, it becomes obvious that the derivative $g'$ in the denominator of $f$ cannot vanish.
\end{proof}

\subsection{Proof of Theorem \ref{thm:second_main}}
\label{subsec:proof_thm_3/4_index0}

Our main idea here is to reformulate the initial boundary condition \eqref{eq:bvp_RCshift_3/4} as \eqref{eq:other_way_BVP}, with the help of a function $f$ which is analytic in $\mathcal{G}_\mathcal L$, admits finite limits on $\mathcal L$ and satisfies on $\mathcal L$ the decoupling condition \eqref{eq:decoupling_d_f}. Using Lemma \ref{lem:BVP} and Theorem \ref{thm:solving_decoupling_condition}, we deduce that $f(y)D(y)$ is analytic in $\mathcal{G}_{\mathcal L}$ and has finite limits on $\mathcal L$. As a consequence, $f(y)D(y)$ satisfies a Riemann-Carleman BVP with index zero (in the sense of Definition \ref{def:index}). Similarly to Section \ref{sub:proof-thmbvpM1} and using again a conformal gluing function, we transform the latter BVP into a Riemann-Hilbert BVP on an open contour, whose solution is 
\begin{equation}
\label{eq:resolution_BVP0}
     D(y)f(y)= \frac{1}{2i\pi}\int_{\mathcal{L}}\frac{zf(z)}{\sqrt{\widetilde{d}(z)}}\frac{w'(z)}{w(z)-w(y)}\dz+c, 
\end{equation}
where $c$ is constant in $y$, but may depend on $t$ (as recalled in Theorem \ref{thm:Sol-BVP} from Appendix \ref{app:RHBVP}, the solutions to a BVP of index zero are determined up to one constant). Notice that $f$ cancels at $y_2$ (the unique pole of $w$) and the integral in the right-hand side of \eqref{eq:resolution_BVP0} as well, it follows that $c=0$.

We now simplify the integrand in \eqref{eq:resolution_BVP0}. First, noting that $h$ satisfies the simple differential equation $h'=\frac{-h}{\sqrt{z^2-1}}$, we obtain with our notation \eqref{eq:def_hat_w}
\begin{equation*}
     f=\frac{g}{g'}=\frac{h(\widehat{w})}{\widehat{w}'h'(\widehat{w})}=-\frac{\sqrt{\widehat{w}^2-1}}{\widehat{w}'}=-\frac{\sqrt{(w-w(Y(x_1)))(w-w(Y(x_2)))}}{w'}.
\end{equation*}
Furthermore, the conformal gluing function $w$ satisfies the following differential equation
\begin{equation}
\label{eq:differential-eq-w}
     \widetilde{d}(z)w'(z)^2=(w(z)-w(Y(x_1)))(w(z)-w(Y(x_2)))(w(z)-w(y_1)),
\end{equation}
see \cite[Sec.~5.5.2.2]{FaIaMa-17}. Taking the square root of \eqref{eq:differential-eq-w} in the neighborhood of $[y_2,y_3]\cap \mathcal G_\mathcal L$ gives
\begin{equation*}
     -\sqrt{\widetilde{d}(z)}w'(z)=\sqrt{(w(z)-w(Y(x_1)))(w(z)-w(Y(x_2)))(w(z)-w(y_1))},
\end{equation*}
as $w$ is decreasing on $[y_2,y_3]\cap \mathcal G_\mathcal L$. It follows that 
\begin{equation*}
     \frac{f(z)}{\sqrt{\widetilde{d}(z)}}=\frac{1}{\sqrt{w(z)-w\left(y_1\right)}}.
\end{equation*}
The proof of Theorem \ref{thm:second_main} is complete.

\begin{rem}
\label{rem:pole_y2}
The differential equation \eqref{eq:differential-eq-w} is only true for the conformal gluing function $w$ whose expression is given in \eqref{eq:expression_gluing}, with a pole at $y_2$. If instead we have at hand a function $w$ with a pole at $y_0\neq y_2$ (for example $y_0=0$, as in Lemma \ref{lem:list_conformal_gluing_functions}), we can consider $w_{y_{2}}=\frac{1}{w-w(y_2)}$, which instead of \eqref{eq:differential-eq-w} satisfies the differential equation
\begin{equation*}
     \widetilde{d}(z)w_{y_{2}}'(z)^2=\widetilde{d}'(y_2)w'(y_2)(w_{y_{2}}(z)-w_{y_{2}}(Y_0(x_1))) (w_{y_{2}}(z)-w_{y_{2}}(Y_0(x_2))) (w_{y_{2}}(z)-w_{y_{2}}(y_1)).
\end{equation*}
\end{rem}

\footnotesize
\bibliographystyle{plain}
\bibliography{bibl}

\normalsize

\appendix

\section{Expression and properties of conformal gluing functions}
\label{app:expression_Gluing}

A crucial ingredient in our main results (Theorems \ref{thm:first_main} and \ref{thm:second_main}) is the function $w(y)$, which we interpret as a conformal mapping from the domain $\mathcal G_\mathcal L$ onto a complex plane cut along an interval, see Section~\ref{subsec:Roots_of_the_kernel}. In this appendix, we recall from \cite{Ra-12,BeBMRa-17} an explicit expression as well as some analytic properties of this function, first in the finite group case, then for infinite group models.

Let us recall that if $w$ is a suitable mapping, then any $\frac{\alpha w+\beta}{\gamma w+\delta}$ is also a suitable mapping, as soon as $\alpha\delta-\beta\gamma\neq 0$. Therefore, all expressions hereafter are given up to such a fractional linear transform.

\subsection{Finite group models}

We start by giving an expression of the conformal mapping $w(y)$ for the Kreweras trilogy of Figure~\ref{fig:symmetric_models_finite}. Let $W=W(t)$ (resp.\ $Z=Z(t)$) be the unique power series (resp.\ the unique power series with no constant term) satisfying 
\begin{equation}
\label{eq:W_Z}
     W=t(2+W^3)\qquad \text{and}\qquad Z=t\frac{1-2Z+6Z^2-2Z^3+Z^4}{(1-Z)^2}.
\end{equation}

\begin{lem}
\label{lem:list_conformal_gluing_functions}
Let $W$ and $Z$ as in \eqref{eq:W_Z}. The function
\begin{equation*}
     w(y) = \left(\frac{1}{y}-\frac{1}{W}\right)\sqrt{1-yW^2}
\end{equation*}
is a conformal mapping for Kreweras model. Likewise, a conformal mapping for reverse Kreweras model is given by
\begin{equation*}
     w(y) = \frac{-ty^3+y^2+t}{2yt} -\frac{2y^2-yW^2-W}{2yW}\sqrt{1-yW(W^3+4)/4+y^2W^2/4}.
\end{equation*}
Finally, a conformal mapping for double Kreweras model is 
\begin{multline*}
     w(y) = \sqrt{1-2yZ(1+Z^2)/(1-Z)^2+Z^2y^2}\frac{(Z(1-Z)+2yZ-(1-Z)y^2)}{2yZ(1-Z)(1+y)}\\
     +\frac{Z(1-Z)^2-Z^2(-1+2Z+Z^2)y+(1-2Z+7Z^2-4Z^3)y^2-Z(1-Z)^2y^3}{2y(1+y)Z(1-Z)^2}.
\end{multline*}
\end{lem}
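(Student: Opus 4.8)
The goal is to verify, for each of the three step sets (Kreweras, reverse Kreweras, double Kreweras), that the stated function $w$ is a conformal gluing function for $\mathcal{G}_\mathcal{L}$ in the sense of Definition \ref{defn:CGF}. I would organize the proof around the three defining properties: meromorphy in $\mathcal{G}_\mathcal{L}$ with finite limits on $\mathcal{L}$, injectivity on $\mathcal{G}_\mathcal{L}$, and the gluing relation $w(y)=w(\overline{y})$ for $y\in\mathcal{L}$. Rather than reproving everything from scratch, I would invoke the general theory: each of these models has a finite group, and by the results recalled in \cite{Ra-12,FaIaMa-17} (see also \cite[Sec.~5.5]{FaIaMa-17}) a conformal gluing function for a finite-group model is an algebraic function that can be written rationally in $y$ and in one auxiliary square root $\sqrt{\widetilde{d}(y)}$ (equivalently, in terms of a uniformizing Weierstrass parametrization of the elliptic/rational curve $K(x,y)=0$). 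So the real content is to check that the closed-form expressions given here \emph{coincide} with such a canonical gluing function, and to identify the relevant branch.

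The cleanest route, which I would carry out model by model, is the following. First, recall (as in Remark \ref{rem:pole_y2} and \cite[Sec.~5.5.2.2]{FaIaMa-17}) that a conformal gluing function satisfies a first-order algebraic differential equation of the form $\widetilde{d}(z)w'(z)^2 = (w(z)-w(Y(x_1)))(w(z)-w(Y(x_2)))(w(z)-w(y_1))$ (up to the obvious modification when the pole is not at $y_2$). I would therefore: (i) write down $\widetilde{d}(y)$ explicitly for each model from \eqref{eq:discriminants} and \eqref{eq:coeff_kernel}, using the symmetry $\widetilde a=a$ etc.; (ii) verify by a direct (if tedious) computation — repeatedly reducing modulo the minimal polynomials of $W$ and $Z$ in \eqref{eq:W_Z} — that the proposed $w$ satisfies the appropriate differential equation, or equivalently that $w$ realizes the correct rational relation with $\sqrt{\widetilde d}$; (iii) check the gluing property $w(y)=w(\overline y)$ on $\mathcal{L}$, which reduces to the statement that on the cut $[x_1,x_2]$ one has $w(Y_0(x))=w(Y_1(x))$, i.e.\ that $w\circ Y$ is a rational (single-valued) function of $x$ — this is immediate from the structure of the formula once one checks that the $\sqrt{\widetilde d(y)}$-odd part of $w$ is killed after composition with $Y$ (since $Y_0,Y_1$ are the two branches and the square root flips sign); (iv) locate the pole and confirm it lies at the claimed point ($y_2$ for reverse and double Kreweras, $0$ for Kreweras), by inspecting where the explicit rational prefactor blows up and noting that the radical stays finite there; (v) conclude injectivity on $\mathcal{G}_\mathcal{L}$: since $w$ has exactly one simple pole in $\mathcal{G}_\mathcal{L}$, it is injective iff it is injective on the boundary $\mathcal{L}$, and boundary injectivity follows from monotonicity of $w$ along $\mathcal{L}_0$ together with the gluing symmetry, plus the fact (as used in the proof of Theorem \ref{thm:solving_decoupling_condition} via \cite[Sec.~1.6]{Du-83}) that a gluing function of an analytic Jordan curve extends univalently past the boundary.

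For the two Kreweras-type models the algebraic curve has genus $0$, so $w$ is genuinely algebraic of low degree and the verification in step (ii) is a finite Gröbner-basis-style computation in $\mathbb{Q}(t)[y,W]/(W-t(2+W^3))$ (resp.\ with $Z$); these I would do by hand or cite as routine. The main obstacle I anticipate is double Kreweras: its expression involves the series $Z$ with the quartic-over-quadratic defining relation, the formula is genuinely bulky, and one must be careful that the nested radical $\sqrt{1-2yZ(1+Z^2)/(1-Z)^2 + Z^2 y^2}$ has its branch cut exactly along (the $Y$-image of) $[x_1,x_2]$ and nowhere inside $\mathcal{G}_\mathcal{L}$, so that $w$ is single-valued and meromorphic there. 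Checking that the branch points of this radical are precisely $Y(x_1)$ and $Y(x_2)$ (equivalently, that $1-2yZ(1+Z^2)/(1-Z)^2+Z^2y^2$ vanishes exactly at $y=Y(x_1),Y(x_2)$) is the crux, and is proved by identifying its roots with the roots of $\widetilde d(y)/(y-y_1)(y-y_2)$ up to the correct normalization, again via the minimal polynomial of $Z$. Once that branch analysis is in place, the gluing identity and injectivity follow from the same general arguments as in the other two cases, and the lemma is proved.
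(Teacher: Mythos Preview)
Your verification approach is sound in principle, but it is a genuinely different route from the paper's. The paper does not verify the three properties of Definition \ref{defn:CGF} directly. Instead, it exploits a shortcut: for each of the three models, the quadrant generating function $Q(0,y)$ is already known in two forms. From \cite[Prop.~13--15]{BMMi-10} one has an explicit algebraic expression of $Q(0,y)$ in $y$ and $t$ (via the series $W$ or $Z$), and from \cite[Thm.~23 and Table~8]{BeBMRa-17} one has $Q(0,y)$ written as a rational function of $w(y)$, $y$ and $t$, where $w$ is \emph{a priori} a conformal gluing function (whose existence is guaranteed by \cite[Thm.~3~(iii)]{Ra-12}). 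Equating these two expressions and solving for $w(y)$ yields the closed forms in the lemma, and the conformal-gluing property is inherited automatically from the abstract $w$ of \cite{Ra-12,BeBMRa-17}. No differential equation, branch-cut analysis, or injectivity argument is needed.

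Your route would also work, but it is substantially heavier and has a couple of soft spots. The genus-$0$ remark is not accurate: for generic $t$ the kernel curve $K(x,y)=0$ has genus $1$ even for the Kreweras trilogy (this is precisely why Weierstrass $\wp$ appears in \eqref{eq:expression_gluing}); what makes $w$ algebraic is the finiteness of the group, not the genus. Your step (ii) via the differential equation \eqref{eq:differential-eq-w} is slightly circular, since the constants $w(Y(x_i))$, $w(y_1)$ on the right-hand side must first be computed from the candidate $w$; this is doable but adds bookkeeping. And your injectivity step (v) (``one simple pole implies injective iff injective on the boundary'') needs a proper argument-principle justification rather than the appeal to \cite{Du-83}, which concerns boundary regularity, not global injectivity. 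None of this is fatal, but the paper's equating-two-formulas trick sidesteps all of it.
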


Notice that the functions $w$ given in Lemma \ref{lem:list_conformal_gluing_functions} all have a pole at $y=0$.

\begin{proof}
Expressions for $w$ are given in \cite[Thm.~3 (iii)]{Ra-12}, but some quantities in the latter statement (namely $\alpha$, $\beta$, $\delta$ and $\gamma$, all depending on $t$) are not totally explicit. So to derive the above expressions of $w$, we will rather use a combination of the works \cite{BMMi-10} and \cite{BeBMRa-17}. Indeed, algebraic expressions of $Q(0,y)$ in terms of $y$ and $t$ are obtained in \cite{BMMi-10} for the three Kreweras models (see Prop.~13, Prop.~14 and Prop.~15 there). On the other hand, an alternative formulation of $Q(0,y)$ as a rational function of $w(y)$, $y$ and $t$ is derived in \cite{BeBMRa-17} (see Thm.~23 and Table 8 there). The formulas of Lemma \ref{lem:list_conformal_gluing_functions} are obtained by equating the two expressions.
\end{proof}

An expression for $w(y)$ for Gessel's model is obtained in \cite[Thm.~7]{KuRa-11}.

\subsection{Infinite group models}

In the infinite group case, the function $w$ is not algebraic anymore (it is even non-D-finite, see \cite[Thm.~2]{Ra-12}). As $\mathcal{L}$ is a quartic curve \cite[Thm.~5.3.3 (i)]{FaIaMa-17}, $w$ can be expressed in terms of Weierstrass' elliptic functions (see \cite[Sec.~5.5.2.1]{FaIaMa-17} or \cite[Thm.~6]{Ra-12}): 
\begin{lem}[\cite{FaIaMa-17,Ra-12,BeBMRa-17}]
\label{lem:conformal_gluing_functions_infinite_group}
The function $w$ defined by
\begin{equation}
\label{eq:expression_gluing}
     w(y)=\wp_{1,3}\Big(-\frac{\omega_1+\omega_2}{2}+\wp_{1,2}^{-1}(f(y))\Big)
\end{equation}
is a conformal mapping for the domain $\mathcal G_\mathcal L$, and has in this domain a unique (and simple) pole, located at $y_2$. The function $w$ admits a meromorphic continuation on $\mathbb C\setminus [y_3,y_4]$. It is D-algebraic in $y$ and in $t$.
\end{lem}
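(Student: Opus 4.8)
The plan is to obtain the statement from the classical genus-one uniformization of the kernel curve $\{K(x,y)=0\}$, as developed by Malyshev and by Fayolle--Iasnogorodski--Malyshev; essentially all the analytic machinery is already in \cite[Ch.~5--6]{FaIaMa-17} and \cite[Sec.~4]{Ra-12}, so the proof mainly assembles and slightly adapts it. First I would recall that, by Lemma~\ref{lem:properties_branch_points}, the four branch points of the kernel are distinct, so the compactified curve $\{K=0\}$ has genus one (cf.\ \cite[Thm.~5.3.3]{FaIaMa-17}) and is uniformized by $\mathbb C/(\omega_1\mathbb Z+\omega_2\mathbb Z)$, with $\omega_1,\omega_2$ generating the period lattice and $\omega_3$ the third (half-)period attached to the group: the Galois involutions lift to $\omega\mapsto-\omega$ and $\omega\mapsto-\omega+\omega_3$, hence $\eta\circ\xi$ is the translation by $\omega_3$, and the group is infinite---our standing hypothesis---exactly when $\omega_3$ is incommensurable with $\omega_2$. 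Under the uniformization $x$ and $y$ become elliptic functions, with $x$ rational in $\wp_{1,2}(\omega)$ and $y$ rational in $\wp_{1,2}(\omega-\omega_3/2)$; the function $f$ appearing in \eqref{eq:expression_gluing} is the explicit (algebraic) function characterized by $f(y(\omega))=\wp_{1,2}(\omega)$, so that $\omega\mapsto\wp_{1,2}^{-1}(f(y))$ is a branch of the inverse of the $y$-uniformization, with coefficients algebraic in $t$.

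Next I would locate $\mathcal G_\mathcal L$ on the torus, following \cite[Sec.~5.5.2]{FaIaMa-17}: the map $y\mapsto\wp_{1,2}^{-1}(f(y))$ is a biholomorphism from $\mathcal G_\mathcal L$ onto an open rectangle $\mathcal R$ with sides parallel to $\omega_1$ and $\omega_2$, carrying $\mathcal L$ to the boundary of $\mathcal R$, the branch point $y_2$ to one of its corners (a ramification point of the degree-two covering $\omega\mapsto y$), and complex conjugation $y\mapsto\overline y$ on $\mathcal L$ to $\omega\mapsto-\omega$ followed by a translation by $\omega_2$ modulo $\omega_1\mathbb Z$. Post-composing with the shift by $-\tfrac12(\omega_1+\omega_2)$ and then with $\wp_{1,3}$ (periods $\omega_1,\omega_3$) produces the $w$ of \eqref{eq:expression_gluing}, and I would then verify the three items of Definition~\ref{defn:CGF}: (i) $\wp_{1,3}$ has a single double pole per period-parallelogram, the shift places it at the preimage of $y_2$, and the covering $\omega\mapsto y$ is two-to-one and ramified there, so $w$ has a unique and \emph{simple} pole at $y_2$ and is holomorphic on the rest of $\mathcal G_\mathcal L$ with finite limits on $\mathcal L$; (ii) $\wp_{1,3}$ is injective on a half-period-parallelogram and the shift puts $\mathcal R$ inside such a half-parallelogram, so $w$ is one-to-one; (iii) $\wp_{1,3}$ is even and $\omega_1$-periodic, and $-2\cdot\bigl(-\tfrac12(\omega_1+\omega_2)\bigr)=\omega_1+\omega_2\equiv\omega_2\pmod{\omega_1\mathbb Z}$ matches the conjugation translation above, whence $w(y)=w(\overline y)$ for $y\in\mathcal L$.

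For the remaining two assertions: the function $f$ and the branch of $\wp_{1,2}^{-1}$ used above continue past $\mathcal L$, identifying $\omega\mapsto\wp_{1,2}^{-1}(f(y))$ with a biholomorphism onto a larger rectangle corresponding to $\mathbb C\setminus[y_3,y_4]$---the cut $[y_3,y_4]$ is the only remaining obstruction, since $y_1$ and $y_2$ lie inside $\mathcal G_\mathcal L$ by Lemma~\ref{lem:properties_curves}---and composition with the elliptic function $\wp_{1,3}$ keeps everything meromorphic there. Finally, $\wp$ satisfies the algebraic ODE $(\wp')^2=4\wp^3-g_2\wp-g_3$ and is D-algebraic, and the class of D-algebraic functions is closed under rational operations, composition and functional inversion, giving D-algebraicity of $w$ in $y$; D-algebraicity in $t$ follows since $\omega_1(t),\omega_2(t),\omega_3(t)$ and the invariants $g_2(t),g_3(t)$ are algebraic in complete elliptic integrals of $t$, which are D-finite---a fortiori D-algebraic---in $t$ (compare \cite{DrHaRoSi-17,BeBMRa-17}).

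The analytically substantial part---genus one, the elliptic uniformization, and the rectangular shape of the lift of $\mathcal G_\mathcal L$---is already available in \cite{FaIaMa-17,Ra-12}, so the only genuine work is the normalization in the second paragraph: checking that the double pole of $\wp_{1,3}$ is sent to $y_2$ (and not to another branch point or to $\infty$) and that the half-period shift $-\tfrac12(\omega_1+\omega_2)$ is exactly what turns evenness of $\wp_{1,3}$ into the gluing relation $w(y)=w(\overline y)$. This amounts to tracking a handful of half-periods through the uniformization, which I expect is cleanest to do by evaluating both sides at the real branch points $y_1,y_2,y_3$; that is where I anticipate the only real care to be needed.
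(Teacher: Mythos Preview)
The paper does not actually prove this lemma: it only records that D-algebraicity is \cite[Thm.~33]{BeBMRa-17} and that all the remaining assertions come from \cite{FaIaMa-17} and \cite[Thm.~6 and Rem.~7]{Ra-12}. Your sketch is a faithful reconstruction of the argument in those references and follows exactly the same route---uniformize the genus-one kernel curve, identify $\mathcal G_\mathcal L$ with a rectangle on the torus, post-compose with a half-period shift and with $\wp_{1,3}$---so there is no methodological difference to discuss.

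Two small imprecisions are worth flagging. First, in your opening paragraph the roles of $x$ and $y$ are swapped: the paper's $f$ is built from the $y$-discriminant $\widetilde d$, so it is $y$ (not $x$) that is fractional-linear in $\wp_{1,2}(\omega)$, and your characterization $f(y(\omega))=\wp_{1,2}(\omega)$ then clashes with ``$y$ rational in $\wp_{1,2}(\omega-\omega_3/2)$''. Second, the assertion that complex conjugation on $\mathcal L$ lifts to $\omega\mapsto-\omega+\omega_2\pmod{\omega_1\mathbb Z}$ is not obviously correct in the determination of $\wp_{1,2}^{-1}$ fixed by the paper (the closed half-parallelogram with a corner at $0$): what is immediate is that $\bar y$ corresponds to $\bar\omega$, and one still has to locate $\bar\omega$ modulo the lattice inside the chosen fundamental domain---this is precisely the ``tracking a handful of half-periods'' that you already single out as the delicate step. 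Once that bookkeeping is done with the paper's conventions, evenness and $\omega_1$-periodicity of $\wp_{1,3}$ together with the shift $-\tfrac12(\omega_1+\omega_2)$ do give the gluing, and the same bookkeeping is what pins the unique pole to $y_2$ and justifies that the shifted rectangle sits inside a half-parallelogram of $\wp_{1,3}$ for injectivity.
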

The differential algebraicity is shown in \cite[Thm.~33]{BeBMRa-17}. The remaining properties stated in Lemma~\ref{lem:conformal_gluing_functions_infinite_group} come from \cite{FaIaMa-17,Ra-12}, see e.g.\ \cite[Thm.~6 and Rem.~7]{Ra-12}.

Let us now comment on the expression \eqref{eq:expression_gluing}, following the discussion in \cite[Sec.~5.2]{BeBMRa-17}. First, $f(y)$ is a rational function of $y$ whose coefficients are algebraic functions of $t$:
\begin{equation*}
     f(y) = \left\{\begin{array}{ll}
     \displaystyle \frac{\widetilde d''(y_4)}{6}+\frac{\widetilde d'(y_4)}{y-y_4} & \text{if } y_4\neq \infty,\medskip\\
     \displaystyle\frac{\widetilde d''(0)}{6}+\frac{\widetilde d'''(0)y}{6}& \text{if } y_4=\infty,
     \end{array}\right.
\end{equation*} 
where $\widetilde d(y)$ is the discriminant \eqref{eq:discriminants} and $y_4$ is one of its roots.

The next ingredient in \eqref{eq:expression_gluing} is Weierstrass' elliptic function $\wp$, with periods $\omega_1$ and $\omega_2$:
\begin{equation*}
     \wp(z)= \wp(z, \omega_1, \omega_2) = \frac 1 {z^2} +\sum_{(i,j) \in \mathbb Z^2
  \setminus\{(0,0)\}} \left( \frac 1
  {(z-i\omega_1-j\omega_2)^2}-\frac1{(i\omega_1+j\omega_2)^2}\right).
\end{equation*}
Then $\wp_{1,2}(z)$ (resp.\ $\wp_{1,3}(z)$) is the Weierstrass function
with periods $\omega_1$ and $\omega_2$ (resp.\ $\omega_1$ and $\omega_3$) defined by:
\begin{equation*}
   \omega_1 = i\int_{y_1}^{y_2} \frac{\text{d} y}{\sqrt{- \widetilde d(y)}},\qquad
     \omega_2 = \int_{y_2}^{y_3} \frac{\text{d} y}{\sqrt{ \widetilde d(y)}},\qquad
     \omega_3 = \int_{Y(x_1)}^{y_1} \frac{\text{d} y}{\sqrt{ \widetilde d(y)}}.
\end{equation*}
These definitions make sense thanks to the properties
  of the $y_i$'s and $Y(x_i)$'s  (see
\cite[Sec.~5.1]{BeBMRa-17}). 
If $Y(x_1)$ is infinite (which happens if
  and only if neither $(-1,0)$ nor $(-1,1)$ are in $\mathcal S$), the
  integral defining $\omega_3$ starts at $-\infty$.
Note that $\omega_1\in i\mathbb R_+$
and $\omega_2,\omega_3\in \mathbb R_+$. 

Finally, as the Weierstrass function is not
injective on $\mathbb C$, we need to clarify our definition of
$\wp_{1,2}^{-1}$ in \eqref{eq:expression_gluing}. The function
$\wp_{1,2}$ is two-to-one on the fundamental parallelogram
$[0,\omega_1)+[0,\omega_2)$ (because $\wp(z)=\wp(-z+\omega_1+\omega_2)$),
but is one-to-one when
restricted to a half-parallelogram---more precisely,  when restricted to the open
  rectangle $(0,\omega_1)+(0,
  \omega_2/2)$ together with the three boundary segments  $[0, \omega_1/2]$,
   $[0, \omega_2/2]$ and $\omega_2/2+[0, \omega_1/2]$. 
  We choose the
determination of $\wp_{1,2}^{-1}$ in this set.

\section{Riemann-Hilbert BVP}
\label{app:RHBVP}

In the way of proving our main results (Theorems \ref{thm:first_main} and \ref{thm:second_main}), a crucial ingredient is the BVP with shift of Lemma \ref{lem:BVP}. It is solved by reduction to a more classical Riemann BVP (Sections \ref{sub:proof-thmbvpM1} and \ref{subsec:proof_thm_3/4_index0}). In this appendix we present the main formulas used to solve the latter, so as to render our paper self-contained. Our main references are the books of Gakhov \cite[Chap.~2]{Ga-90} and Lu \cite[Chap.~4]{Lu-93}.

\begin{figure}[htb]
\centering
\begin{tikzpicture}
\begin{scope}[scale=0.3]
\draw [dblue!90, thick] plot [smooth] coordinates {(-5,-3) (-3,1) (1,1) (4,4)};
\draw [->, dblue!90, thick] (-3,1)--(-2.9,1.1);
\draw[thick, dblue!90, fill=dblue!90](-5,-3) circle (0.15 cm)  node[below]{\tiny{$a$}};
\draw[thick, dblue!90, fill=dblue!90](4,4) circle (0.15 cm)  node[below]{\tiny{$b$}};
\draw [dblue!90] (-4.75,0.5) node{\small{$\mathcal{U}$}};
\draw [->, dgreen!90, thick] (-0.45,2.45)--(0.9,1.1);
\draw [dgreen!90, thick] (1,2.5) node{\small{$\Phi^+$}};
\draw [->, dgreen!90, thick] (2.45,-0.45)--(1.1,0.9);
\draw [dgreen!90, thick] (1,-0.5) node{\small{$\Phi^-$}};

\end{scope}
\end{tikzpicture}
\caption{Left and right limits on the open contour $\mathcal U$}
\label{fig:BVP-open}
\end{figure}
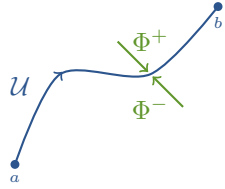

Suppose that $\mathcal{U}$ is an open, smooth, non-intersecting, oriented curve from $a$ to $b$, see Figure~\ref{fig:BVP-open} for an example. Throughout, for $z\in\mathcal U$, we will denote by $\Phi^+(z)$ (resp.\ $\Phi^-(z)$) the limit of a function $\Phi$ as $y\to z$ from the left (resp.\ right) of $\mathcal{U}$, see again Figure \ref{fig:BVP-open}.

\begin{defn}[Riemann BVP]
\label{def:BVP}
Let $\mathcal{U}$ be as above. A function $\Phi$ satisfies a BVP on $\mathcal{U}$ if:
\begin{itemize}
     \item $\Phi$ is sectionally analytic, i.e., analytic in $\mathbb{C}\setminus\mathcal{U}$;
     \item $\Phi$ has finite degree at $\infty$ (the only singularity at $\infty$ is a pole of finite order), and $\Phi$ is bounded in the vicinity of the extremities $a$ and $b$;
     \item $\Phi$ has left limits $\Phi^+$ and right limits $\Phi^-$ on $\mathcal U$;
     \item $\Phi$ satisfies the following boundary condition
\begin{equation}
\label{eq:RBVP}
     \Phi^+(z)=G(z)\Phi^-(z)+g(z), \quad z\in\mathcal{U},
\end{equation}
where $G$ and $g$ are H\"{o}lder functions on $\mathcal{U}$, and $G$ does not vanish on $\mathcal{U}$. 
\end{itemize}
\end{defn}

Let us recall the so-called Sokhotski-Plemelj formulas, which represent a crucial tool to solve the BVP of Definition \ref{def:BVP}.  

\begin{proposition}[Sokhotski-Plemelj formulas]
\label{proposition:Sokhotski-Plemelj}
Let $\mathcal{U}$ be as above, and let $f$ be a H\"{o}lder function on $\mathcal{U}$. The contour integral
\begin{equation*}
     F(z)=\frac{1}{2i\pi} \int_{\mathcal{U}} \frac{f(u)}{u-z}\du
\end{equation*}
is sectionally analytic on $\mathbb C\setminus \mathcal U$. Its left and right limit values $F^+$ and $F^-$ are H\"{o}lder functions on $\mathcal{U}$ and satisfy, for $z\in\mathcal{U}$,
\begin{equation*}
     F^\pm(z) =\pm \frac{1}{2}f(z) +\frac{1}{2i\pi}\int_\mathcal{U}\frac{f(u)}{u-z}\du,
\end{equation*}
where the very last integral is understood in the sense of Cauchy-principal value, see \cite[Chap.~1, Sec.~12]{Ga-90}.
This is equivalent to the following equations on $\mathcal U$:
\begin{equation}
\left\{
\begin{array}{l r l l|}
     F^+(z) -F^-(z)&\hspace{-2mm}=&\hspace{-2mm}f(z),\smallskip\\
     F^-(z) +F^-(z)&\hspace{-2mm}=&\hspace{-2mm}\displaystyle\frac{1}{i\pi}\int_\mathcal{L}\frac{f(u)}{u-z}\du.
\end{array}
\right.
\end{equation}
\end{proposition}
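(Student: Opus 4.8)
The plan is to follow the classical localization argument for Cauchy-type integrals (as in \cite[Chap.~1, Sec.~12]{Ga-90} or \cite[Chap.~4]{Lu-93}); the statement is standard, so the ``proof'' is mostly a recollection of a well-known computation. \emph{Analyticity off $\mathcal U$.} For $z\notin\mathcal U$ the integrand $u\mapsto f(u)/(u-z)$ is continuous on the compact arc $\mathcal U$ while $z\mapsto 1/(u-z)$ is holomorphic; differentiating under the integral sign (legitimate by local uniform bounds) shows that $F$ is holomorphic on $\mathbb C\setminus\mathcal U$ and tends to $0$ at $\infty$, which gives the sectional analyticity.

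\emph{The jump formulas.} Fix an interior point $z_0$ of $\mathcal U$ and split
\begin{equation*}
     F(z)=\frac{1}{2i\pi}\int_{\mathcal U}\frac{f(u)-f(z_0)}{u-z}\du+\frac{f(z_0)}{2i\pi}\int_{\mathcal U}\frac{\du}{u-z},\qquad z\notin\mathcal U.
\end{equation*}
The second integral is elementary: with a branch of the logarithm it equals $\frac{f(z_0)}{2i\pi}\bigl(\log(b-z)-\log(a-z)\bigr)$, an explicit function whose boundary limits at $z_0$ from the two sides of $\mathcal U$ differ by $f(z_0)$ and whose half-sum is the corresponding Cauchy principal value. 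For the first integral, the H\"older bound $|f(u)-f(z_0)|\leq C|u-z_0|^{\mu}$ forces the integrand to be $O(|u-z_0|^{\mu-1})$ near $u=z_0$, hence absolutely integrable uniformly in $z$; a routine estimate then shows this term extends continuously to $z_0$ from either side, with common value $\frac{1}{2i\pi}\,\mathrm{p.v.}\!\int_{\mathcal U}\frac{f(u)-f(z_0)}{u-z_0}\du$. Adding the two pieces yields $F^{\pm}(z_0)=\pm\frac12 f(z_0)+\frac{1}{2i\pi}\,\mathrm{p.v.}\!\int_{\mathcal U}\frac{f(u)}{u-z_0}\du$, and subtracting (resp.\ adding) the two choices of sign gives the displayed pair $F^+-F^-=f$ and $F^++F^-=\frac{1}{i\pi}\,\mathrm{p.v.}\!\int_{\mathcal U}\frac{f(u)}{u-z_0}\du$.

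\emph{H\"older regularity of $F^{\pm}$, and the main difficulty.} The only genuinely technical point is that $F^{\pm}$ are themselves H\"older on $\mathcal U$. This is the Privalov/Plemelj theorem, which I would invoke from \cite[Chap.~1]{Ga-90}: one shows that $z_0\mapsto\mathrm{p.v.}\!\int_{\mathcal U}f(u)/(u-z_0)\du$ is H\"older (with the same exponent when $\mu<1$) by splitting the integral over a small sub-arc centred at $z_0$ and over its complement and estimating each piece using the H\"older bound on $f$ together with the smoothness of $\mathcal U$; since $f$ is H\"older, $F^{\pm}=\pm\frac12 f+(\text{H\"older function})$ is H\"older. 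Thus the main obstacle is precisely this regularity estimate --- everything else is bookkeeping with the explicit logarithmic term and the weakly singular integral above --- and in a self-contained write-up one would either carry out the standard sub-arc estimate or, as the paper does, cite it from \cite{Ga-90,Lu-93}.
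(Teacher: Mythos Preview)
Your proof sketch is correct and follows the classical localization argument from \cite{Ga-90,Lu-93}. Note, however, that the paper does not actually prove this proposition: it is stated as a standard result with a citation to \cite[Chap.~1, Sec.~12]{Ga-90}, and is used as a black box in Appendix~\ref{app:RHBVP}. So there is no ``paper's own proof'' to compare against; your write-up simply supplies what the paper takes for granted, and it does so in the standard way.
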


We also define the following important quantity:
\begin{defn}[Index] 
\label{def:index}
Let $\mathcal{U}$ be as above and let $G$ be the function (continuous on $\mathcal{U}$) as in \eqref{eq:RBVP}. The index $\chi$ of the BVP of Definition \ref{def:BVP} is
\begin{equation*}
     \chi=\Ind_{\mathcal U}G=\frac{1}{2\pi}[\arg G]_\mathcal{U}=\frac{1}{2i\pi}[\log G]_\mathcal{U}=\frac{1}{2i\pi}\int_{\mathcal U} \frac{G'(u)}{G(u)}\du.
\end{equation*}
\end{defn}
Plainly, $\chi$ represents the variation of argument of $G(u)$, when $u$ moves along the contour $\mathcal{U}$ in the positive direction.

The main result is the following, see \cite[Chap.~4, Thm.~2.1.2]{Lu-93}:

\begin{thm}[Solution of Riemann-Hilbert BVP]
\label{thm:Sol-BVP}
Let $\mathcal U$ be as above. The solution of the BVP of Definition \ref{def:BVP} is given by, for $z\notin \mathcal{U}$,
\begin{equation}
\Phi(z)=
\left\{
\begin{array}{l l}
X(z)\psi(z)+X(z)P_\chi(z) & \text{ if } \chi\geq0,  \\
X(z)\psi(z) & \text{ if } \chi=-1, \\
X(z)\psi(z) & \text{ if } \chi <-1  \text{ and if the solvability conditions below hold:}
\end{array}
\right.
\end{equation}
\begin{equation*}
     \frac{1}{2i\pi}\int_\mathcal{U}\frac{g(u)u^{k-1}}{X^+(u)}\du=0, \quad k=1,\ldots,-\chi-1,
\end{equation*}
where $P_\chi$ is an arbitrary polynomial of degree $\chi$, and
\begin{equation*}
\left\{
\begin{array}{r c l}
X(z)&=&\displaystyle (z-b)^{-\chi}\exp \Gamma(z),\medskip\\
X^+(z)&=&\displaystyle(z-b)^{-\chi}\exp \Gamma^+(z),\medskip\\
\Gamma(z)&=&\displaystyle\frac{1}{2i\pi}\int_\mathcal{U}\frac{\log G(u)}{u-z}\du,\medskip\\
\psi(z)&=&\displaystyle\frac{1}{2i\pi}\int_\mathcal{U}\frac{g(u)}{X^+(u)(u-z)}\du.
\end{array}
\right.
\end{equation*}

\end{thm}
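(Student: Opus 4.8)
The statement is the classical solution of a Riemann boundary value problem on an arc, so the plan is to follow the standard reduction scheme of \cite[Chap.~2]{Ga-90} and \cite[Chap.~4]{Lu-93}: first dispose of the coefficient $G$ by means of a \emph{canonical function} solving the homogeneous problem, then solve the resulting pure jump problem by a Cauchy-type integral, and finally use a Liouville-type argument to identify the remaining degrees of freedom and, when the index is very negative, the solvability conditions. The two analytic inputs used throughout are Privalov's theorem (a Cauchy-type integral of a H\"older density over a smooth arc is sectionally analytic with H\"older boundary values) and the Sokhotski--Plemelj formulas of Proposition \ref{proposition:Sokhotski-Plemelj}.

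\emph{Step 1: the canonical function.} I would first solve the homogeneous problem $X^+(z)=G(z)X^-(z)$ on $\mathcal U$. Since $\mathcal U$ is an arc and $G$ is continuous and non-vanishing on it, a continuous (hence H\"older) branch of $\log G$ exists along $\mathcal U$; set $\Gamma(z)=\frac{1}{2i\pi}\int_{\mathcal U}\frac{\log G(u)}{u-z}\du$. By Sokhotski--Plemelj, $\Gamma^+-\Gamma^-=\log G$ on $\mathcal U$, hence $e^{\Gamma^+}/e^{\Gamma^-}=G$, and since $(z-b)^{-\chi}$ is single-valued off the single point $b$ it adds no jump across $\mathcal U$; thus $X(z)=(z-b)^{-\chi}\exp\Gamma(z)$ also satisfies $X^+=GX^-$. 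One then checks that $X$ has no zeros in $\mathbb C\setminus\mathcal U$, behaves like $z^{-\chi}$ at infinity, and --- this is where the integer $\chi=\Ind_{\mathcal U}G$ of Definition \ref{def:index} enters --- that the power $(z-b)^{-\chi}$ is precisely the one making the behaviour of $X$ at the extremities $a,b$ compatible with the admissibility constraints of Definition \ref{def:BVP}.

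\emph{Steps 2 and 3: jump problem and case analysis.} Given any admissible $\Phi$, write $\Phi=X\Omega$; dividing \eqref{eq:RBVP} by $X^+$ and using $X^+=GX^-$ turns the boundary condition into $\Omega^+(z)-\Omega^-(z)=g(z)/X^+(z)$ on $\mathcal U$, with H\"older right-hand side. By Sokhotski--Plemelj a particular solution of this jump relation is $\psi(z)=\frac{1}{2i\pi}\int_{\mathcal U}\frac{g(u)}{X^+(u)(u-z)}\du$, so $\Omega-\psi$ has no jump across $\mathcal U$ and extends to an entire function; transporting the growth and endpoint constraints on $\Phi$ through $X$ forces this function to be a polynomial whose degree is governed by $\chi$. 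Matching the endpoint behaviour of $X$ and its growth $z^{-\chi}$ at infinity against Definition \ref{def:BVP} then gives: when $\chi\ge 0$ the polynomial may be any $P_\chi$ of degree $\le\chi$, so $\Phi=X\psi+XP_\chi$; when $\chi=-1$ it must be zero, so $\Phi=X\psi$ is unique; when $\chi<-1$, expanding $\psi(z)=-\sum_{k\ge1}z^{-k}\frac{1}{2i\pi}\int_{\mathcal U}\frac{g(u)u^{k-1}}{X^+(u)}\du$ at infinity, one finds that $X\psi$ fails to be admissible unless the first $-\chi-1$ of these coefficients vanish, which are exactly the displayed solvability conditions; under them $\Phi=X\psi$ again. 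Assembling the three cases gives the asserted formula.

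\textbf{Main obstacle.} The genuinely delicate point is the endpoint analysis in Step 1: for a general non-vanishing H\"older coefficient $G$, the Cauchy integral $\exp\Gamma$ acquires power-type factors $(z-a)^{\gamma_a}$, $(z-b)^{\gamma_b}$ at the extremities with in general non-integer exponents, and one must define the index $\chi$ and choose the admissibility class carefully so that the canonical function $X$ is genuinely admissible --- this is the classical and somewhat technical endpoint theory of Cauchy-type integrals (Muskhelishvili, Gakhov). In the situations relevant to this paper the coefficient $G=\sqrt{\widetilde d(v^-)}/\sqrt{\widetilde d(v^+)}$ tends to $1$ at both endpoints of $\mathcal U$ (since $v^+=v^-$ there), so those exponents vanish, $\exp\Gamma$ is bounded and non-vanishing at $a,b$, and by Proposition \ref{prop:computation_index} one has $\chi=-1$; the $\chi=-1$ branch of the formula then applies directly, which is all that is needed in Sections \ref{sub:proof-thmbvpM1} and \ref{subsec:proof_thm_3/4_index0}.
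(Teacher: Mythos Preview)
The paper does not give its own proof of this theorem: it is stated in Appendix \ref{app:RHBVP} as a classical result, quoted verbatim from \cite[Chap.~4, Thm.~2.1.2]{Lu-93} (and \cite[Chap.~2]{Ga-90}). Your proposal is exactly the standard textbook argument found in those references --- canonical function via $\exp\Gamma$ and Sokhotski--Plemelj, reduction of $\Phi/X$ to a pure jump problem, Liouville to classify the entire remainder according to the index --- so there is nothing to compare; your sketch is correct and faithful to the cited sources, and your endpoint remark (that $G\to1$ at $a,b$ in the application, so the $\chi=-1$ case applies cleanly) is exactly the reason the paper can invoke the theorem without further discussion.
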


\section{Proof of Lemma \ref{lem:functional_equation_sym}}
\label{app:proof_lem1}

The decomposition in \eqref{eq:equation_cut3parts} expresses $C(x,y)$ as a sum of three generating functions. Thanks to the symmetry of the step set and the fact that the starting point lies on the diagonal, $\widehat{U}(x,y)=\widehat{L}(y,x)$ and $C(x,y)$ is written as the sum $\widehat{L}(x,y)+\widehat{D}(x,y)$ of two unknowns.
We further introduce the generating functions
\begin{equation*}
     \widehat{D}^{\ell}(x,y)=\sum_{n\geq0,i\geq0}c_{i,i-1}(n)x^{i}y^{i-1}t^n\quad\text{and}     
     \quad
     \widehat{D}^{u}(x,y)=\sum_{n\geq0,i\geq0}c_{i-1,i}(n)x^{i-1}y^it^n,
\end{equation*}
which respectively count walks ending on the lower (resp.\ upper) diagonal, see Figure \ref{fig:some_sections}. In this section, we consider walks starting on the diagonal and ending anywhere in the three-quadrant $\mathcal C$.

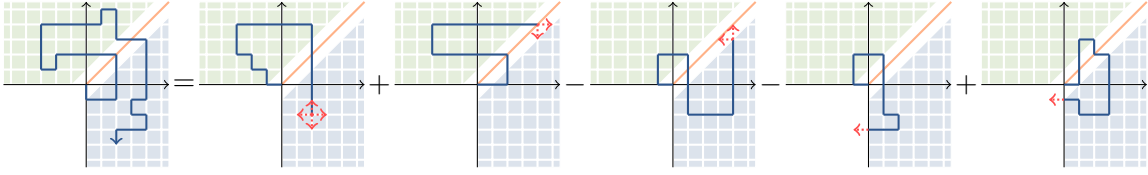
\begin{figure}[!h]
\begin{center}
\begin{tikzpicture}
\begin{scope}[scale=0.2]
\draw[white, fill=dblue!15] (0,-1) -- (5.5,4.5) -- (5.5,-5.5) -- (0,-5.5);
\draw[white, fill=dgreen!15] (-1,0) -- (-5.5,0) -- (-5.5,5.5) -- (4.5,5.5);
\draw[white, thick] (0,-5.5) grid (5.5,5.5);
\draw[white, thick] (-5.5,0) grid (0,5.5);
\draw[Apricot!90, thick] (0,0) -- (5.5,5.5);
\draw[->] (0,-5.5) -- (0,5.5);
\draw[->] (-5.5,0) -- (5.5,0);
\draw[thick, dblue!90] (0,0) -- (0,-1);
\draw[thick, dblue!90] (0,-1) -- (2,-1);
\draw[thick, dblue!90] (2,-1) -- (2,2);
\draw[thick, dblue!90] (2,2) -- (-2,2);
\draw[thick, dblue!90] (-2,2) -- (-2,1);
\draw[thick, dblue!90] (-2,1) -- (-3,1);
\draw[thick, dblue!90] (-3,1) -- (-3,4);
\draw[thick, dblue!90] (-3,4) -- (1,4);
\draw[thick, dblue!90] (1,4) -- (1,5);
\draw[thick, dblue!90] (1,5) -- (2,5);
\draw[thick, dblue!90] (2,5) -- (2,3);
\draw[thick, dblue!90] (2,3) -- (4,3);
\draw[thick, dblue!90] (4,3) -- (4,-1);
\draw[thick, dblue!90] (4,-1) -- (3,-1);
\draw[thick, dblue!90] (3,-1) -- (3,-2);
\draw[thick, dblue!90] (3,-2) -- (4,-2);
\draw[thick, dblue!90] (4,-2) -- (4,-3);
\draw[thick, dblue!90] (4,-3) -- (2,-3);
\draw[thick, dblue!90, ->] (2,-3) -- (2,-4);
\draw (6.5,0) node{$=$};
\end{scope}

\begin{scope}[scale=0.2, xshift=13cm]
\draw[white, fill=dblue!15] (0,-1) -- (5.5,4.5) -- (5.5,-5.5) -- (0,-5.5);
\draw[white, fill=dgreen!15] (-1,0) -- (-5.5,0) -- (-5.5,5.5) -- (4.5,5.5);
\draw[white, thick] (0,-5.5) grid (5.5,5.5);
\draw[white, thick] (-5.5,0) grid (0,5.5);
\draw[Apricot!90, thick] (0,0) -- (5.5,5.5);
\draw[->] (0,-5.5) -- (0,5.5);
\draw[->] (-5.5,0) -- (5.5,0);
\draw[thick, dblue!90] (0,0) -- (-1,0);
\draw[thick, dblue!90] (-1,0) -- (-1,1);
\draw[thick, dblue!90] (-1,1) -- (-2,1);
\draw[thick, dblue!90] (-2,1) -- (-2,2);
\draw[thick, dblue!90] (-2,2) -- (-3,2);
\draw[thick, dblue!90] (-3,2) -- (-3,4);
\draw[thick, dblue!90] (-3,4) -- (2,4);
\draw[thick, dblue!90] (2,4) -- (2,-2);
\draw[thick, red!70, ->, densely dotted] (2,-2) -- (1,-2);
\draw[thick, red!70, ->, densely dotted] (2,-2) -- (3,-2);
\draw[thick, red!70, ->, densely dotted] (2,-2) -- (2,-3);
\draw[thick, red!70, ->, densely dotted] (2,-2) -- (2,-1);
\draw (6.5,0) node{$+$};
\end{scope}

\begin{scope}[scale=0.2, xshift=26cm]
\draw[white, fill=dblue!15] (0,-1) -- (5.5,4.5) -- (5.5,-5.5) -- (0,-5.5);
\draw[white, fill=dgreen!15] (-1,0) -- (-5.5,0) -- (-5.5,5.5) -- (4.5,5.5);
\draw[white, thick] (0,-5.5) grid (5.5,5.5);
\draw[white, thick] (-5.5,0) grid (0,5.5);
\draw[Apricot!90, thick] (0,0) -- (5.5,5.5);
\draw[->] (0,-5.5) -- (0,5.5);
\draw[->] (-5.5,0) -- (5.5,0);
\draw[thick, dblue!90] (0,0) -- (2,0);
\draw[thick, dblue!90] (2,0) -- (2,2);
\draw[thick, dblue!90] (2,2) -- (-3,2);
\draw[thick, dblue!90] (-3,2) -- (-3,4);
\draw[thick, dblue!90] (-3,4) -- (4,4);
\draw[thick, red!70, ->, densely dotted] (4,4) -- (5,4);
\draw[thick, red!70, ->, densely dotted] (4,4) -- (4,3);
\draw (6.5,0) node{$-$};
\end{scope}

\begin{scope}[scale=0.2, xshift=39cm]
\draw[white, fill=dblue!15] (0,-1) -- (5.5,4.5) -- (5.5,-5.5) -- (0,-5.5);
\draw[white, fill=dgreen!15] (-1,0) -- (-5.5,0) -- (-5.5,5.5) -- (4.5,5.5);
\draw[white, thick] (0,-5.5) grid (5.5,5.5);
\draw[white, thick] (-5.5,0) grid (0,5.5);
\draw[Apricot!90, thick] (0,0) -- (5.5,5.5);
\draw[->] (0,-5.5) -- (0,5.5);
\draw[->] (-5.5,0) -- (5.5,0);
\draw[thick, dblue!90] (0,0) -- (-1,0);
\draw[thick, dblue!90] (-1,0) -- (-1,2);
\draw[thick, dblue!90] (-1,2) -- (1,2);
\draw[thick, dblue!90] (1,2) -- (1,-2);
\draw[thick, dblue!90] (1,-2) -- (4,-2);
\draw[thick, dblue!90] (4,-2) -- (4,3);
\draw[thick, red!70, ->, densely dotted] (4,3) -- (4,4);
\draw[thick, red!70, ->, densely dotted] (4,3) -- (3,3);
\draw (6.5,0) node{$-$};
\end{scope}

\begin{scope}[scale=0.2, xshift=52cm]
\draw[white, fill=dblue!15] (0,-1) -- (5.5,4.5) -- (5.5,-5.5) -- (0,-5.5);
\draw[white, fill=dgreen!15] (-1,0) -- (-5.5,0) -- (-5.5,5.5) -- (4.5,5.5);
\draw[white, thick] (0,-5.5) grid (5.5,5.5);
\draw[white, thick] (-5.5,0) grid (0,5.5);
\draw[Apricot!90, thick] (0,0) -- (5.5,5.5);
\draw[->] (0,-5.5) -- (0,5.5);
\draw[->] (-5.5,0) -- (5.5,0);
\draw[thick, dblue!90] (0,0) -- (-1,0);
\draw[thick, dblue!90] (-1,0) -- (-1,2);
\draw[thick, dblue!90] (-1,2) -- (1,2);
\draw[thick, dblue!90] (1,2) -- (1,-2);
\draw[thick, dblue!90] (1,-2) -- (2,-2);
\draw[thick, dblue!90] (2,-2) -- (2,-3);
\draw[thick, dblue!90] (2,-3) -- (0,-3);
\draw[thick, red!70, ->, densely dotted] (0,-3) -- (-1,-3);
\draw (6.5,0) node{$+$};
\end{scope}

\begin{scope}[scale=0.2, xshift=65cm]
\draw[white, fill=dblue!15] (0,-1) -- (5.5,4.5) -- (5.5,-5.5) -- (0,-5.5);
\draw[white, fill=dgreen!15] (-1,0) -- (-5.5,0) -- (-5.5,5.5) -- (4.5,5.5);
\draw[white, thick] (0,-5.5) grid (5.5,5.5);
\draw[white, thick] (-5.5,0) grid (0,5.5);
\draw[Apricot!90, thick] (0,0) -- (5.5,5.5);
\draw[->] (0,-5.5) -- (0,5.5);
\draw[->] (-5.5,0) -- (5.5,0);
\draw[thick, dblue!90] (0,0) -- (1,0);
\draw[thick, dblue!90] (1,0) -- (1,3);
\draw[thick, dblue!90] (1,3) -- (2,3);
\draw[thick, dblue!90] (2,3) -- (2,2);
\draw[thick, dblue!90] (2,2) -- (3,2);
\draw[thick, dblue!90] (3,2) -- (3,-2);
\draw[thick, dblue!90] (3,-2) -- (1,-2);
\draw[thick, dblue!90] (1,-2) -- (1,-1);
\draw[thick, dblue!90] (1,-1) -- (0,-1);
\draw[thick, red!70, ->, densely dotted] (0,-1) -- (-1,-1);
\end{scope}

\end{tikzpicture}
\end{center}
\caption{Different ways to end in the lower part (example of the simple walk)}\label{fig:diagtolowerSW}
\end{figure}

Thereafter, $c_{i,j}(n)$ is counting walks from $(i_0,i_0)$ to $(i,j)$ in $n$ steps. Classically \cite{BMMi-10}, we construct a walk by adding a new step at the end of the walk at each stage. We first derive a functional equation for $\widehat{L}(x,y)$ by taking into account all possibilities of ending in the lower part: 
\begin{itemize}
     \item we may add a step from $\widehat{\mathcal{S}}$ (recall that $\widehat{\mathcal{S}}$ is the step set before the change of variable $\varphi$) to walks ending in the lower part, yielding in \eqref{eq:fct-eq-L(x,y)} the term $t(\sum_{(i,j)\in\widehat{\mathcal{S}}} x^i y^j )\widehat{L}(x,y)$, see the second picture on Figure \ref{fig:diagtolowerSW} in the particular case of the simple walk; 
     \item walks coming from the diagonal also need to be counted up, giving rise in \eqref{eq:fct-eq-L(x,y)} to the term $t(\delta_{1,0}x+\delta_{0,-1}y^{-1})\widehat{D}(x,y)$ (third picture on Figure \ref{fig:diagtolowerSW}); 
     \item on the other hand, walks going out of the three-quarter plane need to be removed, yielding the terms $t( \delta_{-1,0}x^{-1}+\delta_{0,1}y) \widehat{D}^\ell(x,y)$ (the lower diagonal) and $t(\delta_{-1,0}x^{-1}+\delta_{-1,-1}x^{-1}y^{-1}) \widehat{L}_{0-}(y^{-1})$ (negative $y$-axis), see the fourth and fifth pictures on Figure \ref{fig:diagtolowerSW};
     \item we finally add the term $t\delta_{-1,0}x^{-1}\sum_{n\geq 0}c_{0,-1}(n)y^{-1}t^n$ which was subtracted twice, corresponding to the rightmost picture on Figure \ref{fig:diagtolowerSW}. 
\end{itemize}     
We end up with a first functional equation:
\begin{multline}
\label{eq:fct-eq-L(x,y)}
\widehat{L}(x,y)= t\sum_{(i,j)\in\widehat{\mathcal{S}}} x^i y^j \widehat{L}(x,y)
+ t(\delta_{1,0}x+\delta_{0,-1}y^{-1})\widehat{D}(x,y) 
-t( \delta_{-1,0}x^{-1}+\delta_{0,1}y) \widehat{D}^\ell(x,y)
\\
-t( \delta_{-1,0}x^{-1}+\delta_{-1,-1}x^{-1}y^{-1}) \widehat{L}_{0-}(y^{-1})
+t( \delta_{-1,0}x^{-1})\sum_{n\geq 0}c_{0,-1}(n)y^{-1}t^n.
\end{multline}
We now prove the second equation
\begin{multline}
\label{eq:fct-eq-D(x,y)}
\widehat{D}(x,y)=x^{i_0}y^{i_0}
+t( \delta_{1,1}xy+\delta_{-1,-1}x^{-1}y^{-1})\widehat{D}(x,y)-t\delta_{-1,-1}x^{-1}y^{-1}\widehat{D}(0,0)\\
+2t( \delta_{-1,0}x^{-1}+\delta_{0,1}y) \widehat{D}^\ell(x,y)
-2t\delta_{-1,0}x^{-1}\sum_{n\geq 0}c_{0,-1}(n)y^{-1}t^n,
\end{multline}
and remark that by plugging in \eqref{eq:fct-eq-D(x,y)} into \eqref{eq:fct-eq-L(x,y)} we get \eqref{eq:functional_equation_sym}, thereby completing the proof of Lemma~\ref{lem:functional_equation_sym}.
 
This second equation \eqref{eq:fct-eq-D(x,y)} is obtained by writing all possibilities of ending on the diagonal, as illustrated on Figure \ref{fig:diagtodiagSW} for simple walks: 
\begin{itemize}
     \item we first count the empty walk, giving the term $x^{i_0}y^{i_0}$; 
     \item we add the walks remaining on the diagonal $t( \delta_{1,1}xy+\delta_{-1,-1}x^{-1}y^{-1})\widehat{D}(x,y)$, the walks ending on the diagonal coming from the upper part $t(\delta_{0,-1}y^{-1}+\delta_{1,0}x)\widehat{D}^u(x,y)$ and those coming from the lower part $t(\delta_{-1,0}x^{-1}+\delta_{0,1}y) \widehat{D}^\ell(x,y)$;
     \item finally, walks going out of the domain need to be removed, giving $t\delta_{-1,-1}x^{-1}y^{-1}\widehat{D}(0,0)$, $t\delta_{0,-1}y^{-1}\sum_{n\geq 0}c_{-1,0}(n)x^{-1}t^n$ and $t\delta_{-1,0}x^{-1}\sum_{n\geq 0}c_{0,-1}(n)y^{-1}t^n$. 
\end{itemize}
Thanks to the symmetry of the step set, the number of walks coming from the upper part is the same as the number of walks coming from the lower part.

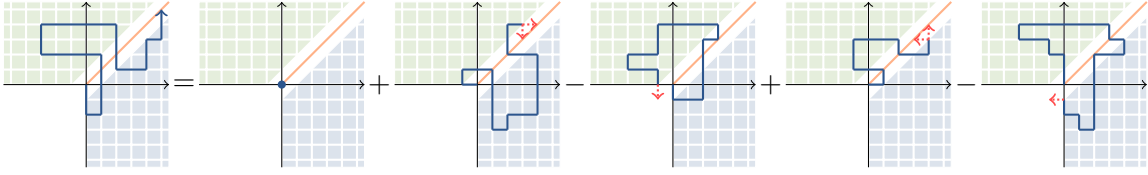
\begin{figure}[!h]
\begin{center}
\begin{tikzpicture}
\begin{scope}[scale=0.2]
\draw[white, fill=dblue!15] (0,-1) -- (5.5,4.5) -- (5.5,-5.5) -- (0,-5.5);
\draw[white, fill=dgreen!15] (-1,0) -- (-5.5,0) -- (-5.5,5.5) -- (4.5,5.5);
\draw[white, thick] (0,-5.5) grid (5.5,5.5);
\draw[white, thick] (-5.5,0) grid (0,5.5);
\draw[Apricot!90, thick] (0,0) -- (5.5,5.5);
\draw[->] (0,-5.5) -- (0,5.5);
\draw[->] (-5.5,0) -- (5.5,0);
\draw[thick, dblue!90] (0,0) -- (0,-2);
\draw[thick, dblue!90] (0,-2) -- (1,-2);
\draw[thick, dblue!90] (1,-2) -- (1,2);
\draw[thick, dblue!90] (1,2) -- (-3,2);
\draw[thick, dblue!90] (-3,2) -- (-3,4);
\draw[thick, dblue!90] (-3,4) -- (2,4);
\draw[thick, dblue!90] (2,4) -- (2,1);
\draw[thick, dblue!90] (2,1) -- (4,1);
\draw[thick, dblue!90] (4,1) -- (4,3);
\draw[thick, dblue!90] (4,3) -- (5,3);
\draw[thick, dblue!90, ->] (5,3) -- (5,5);
\draw (6.5,0) node{$=$};
\end{scope}

\begin{scope}[scale=0.2, xshift=13cm]
\draw[white, fill=dblue!15] (0,-1) -- (5.5,4.5) -- (5.5,-5.5) -- (0,-5.5);
\draw[white, fill=dgreen!15] (-1,0) -- (-5.5,0) -- (-5.5,5.5) -- (4.5,5.5);
\draw[white, thick] (0,-5.5) grid (5.5,5.5);
\draw[white, thick] (-5.5,0) grid (0,5.5);
\draw[Apricot!90, thick] (0,0) -- (5.5,5.5);
\draw[->] (0,-5.5) -- (0,5.5);
\draw[->] (-5.5,0) -- (5.5,0);
\draw[thick, dblue!90, fill=dblue!90] (0,0) circle (0.2 cm);
\draw (6.5,0) node{$+$};
\end{scope}

\begin{scope}[scale=0.2, xshift=26cm]
\draw[white, fill=dblue!15] (0,-1) -- (5.5,4.5) -- (5.5,-5.5) -- (0,-5.5);
\draw[white, fill=dgreen!15] (-1,0) -- (-5.5,0) -- (-5.5,5.5) -- (4.5,5.5);
\draw[white, thick] (0,-5.5) grid (5.5,5.5);
\draw[white, thick] (-5.5,0) grid (0,5.5);
\draw[Apricot!90, thick] (0,0) -- (5.5,5.5);
\draw[->] (0,-5.5) -- (0,5.5);
\draw[->] (-5.5,0) -- (5.5,0);
\draw[thick, dblue!90] (0,0) -- (-1,0);
\draw[thick, dblue!90] (-1,0) -- (-1,1);
\draw[thick, dblue!90] (-1,1) -- (1,1);
\draw[thick, dblue!90] (1,1) -- (1,-3);
\draw[thick, dblue!90] (1,-3) -- (2,-3);
\draw[thick, dblue!90] (2,-3) -- (2,-2);
\draw[thick, dblue!90] (2,-2) -- (4,-2);
\draw[thick, dblue!90] (4,-2) -- (4,2);
\draw[thick, dblue!90] (4,2) -- (2,2);
\draw[thick, dblue!90] (2,2) -- (2,4);
\draw[thick, dblue!90] (2,4) -- (3,4);
\draw[thick, red!70, ->, densely dotted] (3,4) -- (4,4);
\draw[thick, red!70, ->, densely dotted] (3,4) -- (3,3);
\draw (6.5,0) node{$-$};
\end{scope}

\begin{scope}[scale=0.2, xshift=39cm]
\draw[white, fill=dblue!15] (0,-1) -- (5.5,4.5) -- (5.5,-5.5) -- (0,-5.5);
\draw[white, fill=dgreen!15] (-1,0) -- (-5.5,0) -- (-5.5,5.5) -- (4.5,5.5);
\draw[white, thick] (0,-5.5) grid (5.5,5.5);
\draw[white, thick] (-5.5,0) grid (0,5.5);
\draw[Apricot!90, thick] (0,0) -- (5.5,5.5);
\draw[->] (0,-5.5) -- (0,5.5);
\draw[->] (-5.5,0) -- (5.5,0);
\draw[thick, dblue!90] (0,0) -- (0,-1);
\draw[thick, dblue!90] (0,-1) -- (2,-1);
\draw[thick, dblue!90] (2,-1) -- (2,3);
\draw[thick, dblue!90] (2,3) -- (3,3);
\draw[thick, dblue!90] (3,3) -- (3,4);
\draw[thick, dblue!90] (3,4) -- (-1,4);
\draw[thick, dblue!90] (-1,4) -- (-1,2);
\draw[thick, dblue!90] (-1,2) -- (-3,2);
\draw[thick, dblue!90] (-3,2) -- (-3,1);
\draw[thick, dblue!90] (-3,1) -- (-1,1);
\draw[thick, dblue!90] (-1,1) -- (-1,0);
\draw[thick, red!70, ->, densely dotted] (-1,0) -- (-1,-1);
\draw (6.5,0) node{$+$};
\end{scope}

\begin{scope}[scale=0.2, xshift=52cm]
\draw[white, fill=dblue!15] (0,-1) -- (5.5,4.5) -- (5.5,-5.5) -- (0,-5.5);
\draw[white, fill=dgreen!15] (-1,0) -- (-5.5,0) -- (-5.5,5.5) -- (4.5,5.5);
\draw[white, thick] (0,-5.5) grid (5.5,5.5);
\draw[white, thick] (-5.5,0) grid (0,5.5);
\draw[Apricot!90, thick] (0,0) -- (5.5,5.5);
\draw[->] (0,-5.5) -- (0,5.5);
\draw[->] (-5.5,0) -- (5.5,0);
\draw[thick, dblue!90] (0,0) -- (1,0);
\draw[thick, dblue!90] (1,0) -- (1,1);
\draw[thick, dblue!90] (1,1) -- (-1,1);
\draw[thick, dblue!90] (-1,1) -- (-1,3);
\draw[thick, dblue!90] (-1,3) -- (2,3);
\draw[thick, dblue!90] (2,3) -- (2,2);
\draw[thick, dblue!90] (2,2) -- (4,2);
\draw[thick, dblue!90] (4,2) -- (4,3);
\draw[thick, red!70, ->, densely dotted] (4,3) -- (3,3);
\draw[thick, red!70, ->, densely dotted] (4,3) -- (4,4);
\draw (6.5,0) node{$-$};
\end{scope}

\begin{scope}[scale=0.2, xshift=65cm]
\draw[white, fill=dblue!15] (0,-1) -- (5.5,4.5) -- (5.5,-5.5) -- (0,-5.5);
\draw[white, fill=dgreen!15] (-1,0) -- (-5.5,0) -- (-5.5,5.5) -- (4.5,5.5);
\draw[white, thick] (0,-5.5) grid (5.5,5.5);
\draw[white, thick] (-5.5,0) grid (0,5.5);
\draw[Apricot!90, thick] (0,0) -- (5.5,5.5);
\draw[->] (0,-5.5) -- (0,5.5);
\draw[->] (-5.5,0) -- (5.5,0);
\draw[thick, dblue!90] (0,0) -- (0,2);
\draw[thick, dblue!90] (0,2) -- (-1,2);
\draw[thick, dblue!90] (-1,2) -- (-1,3);
\draw[thick, dblue!90] (-1,3) -- (-3,3);
\draw[thick, dblue!90] (-3,3) -- (-3,4);
\draw[thick, dblue!90] (-3,4) -- (3,4);
\draw[thick, dblue!90] (3,4) -- (3,3);
\draw[thick, dblue!90] (3,3) -- (4,3);
\draw[thick, dblue!90] (4,3) -- (4,2);
\draw[thick, dblue!90] (4,2) -- (2,2);
\draw[thick, dblue!90] (2,2) -- (2,-3);
\draw[thick, dblue!90] (2,-3) -- (1,-3);
\draw[thick, dblue!90] (1,-3) -- (1,-2);
\draw[thick, dblue!90] (1,-2) -- (0,-2);
\draw[thick, dblue!90] (0,-2) -- (0,-1);
\draw[thick, red!70, ->, densely dotted] (0,-1) -- (-1,-1);
\end{scope}
\end{tikzpicture}
\end{center}
\caption{Different ways to end on the diagonal (example of the simple walk)}\label{fig:diagtodiagSW}
\end{figure}

\begin{rem}
A step set containing the jumps $(-1,1)$ and $(1,-1)$ would lead to two additional terms in the functional equations, namely 
\begin{equation*}
     \delta_{-1,1}x^{-1}y\sum_{n,i\geq 0}c_{i,i-2}(n)x^i y^{i-2}t^n\qquad 
     \text{and}
     \qquad \delta_{1,-1}xy^{-1} \sum_{n,j\geq 0}c_{j-2,j}(n)x^{j-2}y^jt^n,
\end{equation*} 
making the resolution much more complicated (not to say impossible, by our techniques!). Likewise, considering an asymmetric step set and/or a starting point out of the diagonal would lead to other terms in the functional equation. 
\end{rem}

\end{document}